\theoremstyle{plain}
\newtheorem{theorem}{Theorem}[section]
\newtheorem{lemma}[theorem]{Lemma}
\newtheorem{proposition}[theorem]{Proposition}
\newtheorem{corollary}[theorem]{Corollary}
\theoremstyle{definition}
\newtheorem{definition}[theorem]{Definition}
\DeclareMathAlphabet{\mathbbmsl}{U}{bbm}{m}{sl}
\begin{document}

\newcommand{\Lip}{\text{\rm Lip}}
\newcommand{\Grass}{\text{\rm Grass}}
\newcommand{\Graph}{\text{\rm Graph}}
\newcommand{\diam}{\text{\rm diam}}
\newcommand{\dist}{\text{\rm dist}}
\newcommand{\Id}{\text{\rm Id}}
\newcommand{\Card}{\text{\rm Card}}

\numberwithin{equation}{section}

\title{Lipschitz sub-actions for locally maximal hyperbolic sets of a  $C^1$ maps}

\author{
Xifeng Su \footnote{School of Mathematical Sciences,
Beijing Normal University,
No. 19, XinJieKouWai Street, HaiDian District,
 Beijing 100875, P. R. China,
\texttt{xfsu@bnu.edu.cn, billy3492@gmail.com}}
\quad 
Philippe Thieullen \footnote{Institut de Math\'ematiques de Bordeaux,
Universit\'e de Bordeaux,
351 cours de la Lib\'eration, F 33405 Talence, France,
\texttt{philippe.thieullen@u-bordeaux.fr}}
\quad
Wenzhe Yu \footnote{School of Mathematical Sciences,
Beijing Normal University,
No. 19, XinJieKouWai Street, HaiDian District,
 Beijing 100875, P. R. China,
 \texttt{201821130020@mail.bnu.edu.cn}}
} 

\date{\today}

\maketitle

\begin{abstract}
Liv\v{s}ic theorem asserts that, for Anosov diffeomorphisms/flows, a Lipschitz observable is a coboundary if all its Birkhoff sums on every periodic orbits are equal to zero. The transfer function is then Lipschitz. We prove a positive Liv\v{s}ic theorem which asserts that a Lipschitz observable is bounded from below by a  coboundary if and only if all its Birkhoff sums on periodic orbits are non negative. The new result is that the coboundary can be chosen Lipschitz. The map  is only assumed to be $C^1$ and hyperbolic,  but not necessarily bijective nor transitive. We actually prove our main result in the setting of locally maximal hyperbolic sets for not general $C^1$ map. The construction of the coboundary uses a new notion of the Lax-Oleinik operator that is a standard tool in the discrete Aubry-Mather theory.

{\bf Keywords: }Anosov diffeomorphism, discrete weak KAM theory, calibrated subactions, Lax-Oleinik operator, Lipschitz coboundary.
\end{abstract}

\section{Introduction and main results}

A $C^r$ dynamical system, $r\geq$,  is a couple $(M,f)$ where $M$ is a $C^r$ manifold of dimension $d_M \geq 2$, without boundary, not necessarily compact,  and $f:M \to M$ on a  $C^r$ map, not necessarily injective. The tangent bundle $TM$ is assumed to be equipped with a Finsler norm $\| \cdot \|$ depending $C^{r-1}$ with respect to the base point. A topological dynamical system is a couple $(M,f)$ where $M$ is a metric space and $f:M \to M$ is a continuous map.   We recall  several standard definitions. The theory of Anosov systems is well explained in Hasselblatt, Katok \cite{HasselblattKatok1995}, Bonatti, Diaz, Viana \cite{BonattiDiazViana2005}.

\begin{definition} \label{Definition:LocallyHyperbolicMap}
Let $(M,f)$ be a $C^r$ dynamical system and  $\Lambda \subseteq M$ be a compact set strongly invariant by $f$, $f(\Lambda)=\Lambda$.  Let  $d_M = d^u + d^s$, $d^u \geq1$, $d^s \geq 1$.
\begin{enumerate}

\item $\Lambda$ is said to be hyperbolic  if there exist constants $\lambda^s < 0 < \lambda^u$, $C_{\Lambda}\geq1$, and a continuous equivariant splitting over $\Lambda$, $\forall\, x  \in \Lambda, \ T_xM =E_\Lambda^u(x) \oplus E_\Lambda^s(x)$, 
\begin{gather*}
\left\{\begin{array}{ccc}
\Lambda &\to& \Grass(TM,d^u) \\
x &\mapsto& E_\Lambda^u(x)
\end{array}\right.
\quad
\left\{\begin{array}{ccc}
\Lambda &\to& \Grass(TM,d^s) \\
x &\mapsto& E_\Lambda^s(x)
\end{array}\right. \quad \text{are \ $C^0$},
\end{gather*}
such that
\begin{gather*}
\forall\, x \in \Lambda, \ T_xf(E^u(x)) = E^u(f(x)), \ T_xf(E^s(x)) \subseteq E^s(f(x)), \\
\forall x \in \Lambda, \ \forall n\geq0, \ 
\renewcommand{\arraystretch}{1.2}
\left\{\begin{array}{l}
\forall v \in E_\Lambda^s(x) ,\ \| T_xf^n(v) \| \leq C_{\Lambda} \, e^{n\lambda^s} \|v\|, \\
\forall v \in E_\Lambda^u(x), \ \| T_xf^{n}(v) \| \geq C_{\Lambda}^{-1} \, e^{n\lambda^u} \|v\|.
\end{array}\right.
\end{gather*}

\item $\Lambda$ is said to  be  {\it locally maximal} if  there exists an open neighborhood $U$ of $\Lambda$ of compact closure  such that  
\[
\bigcap_{n\in\mathbb{Z}} f^n(\bar U) = \Lambda.
\]
\item $\Lambda$ is said to be an {\it   attractor}  if   there exists an open neighborhood $U$  of $\Lambda$ of compact closure such that 
\[
f(\bar U) \subseteq U \quad\text{and}\quad \bigcap_{n\geq0} f^n(\bar U) = \Lambda.
\]
\end{enumerate}
(Notice that the map $f$ is not assumed to be invertible nor transitive as it is done usually.)
\end{definition}

We also consider a Lipschitz continuous observable $\phi : U \to \mathbb{R}$. We want to understand the structure of the orbits that minimize the Birkhoff averages of $\phi$. We recall several standard definitions.

\begin{definition} \label{Definition:DiscreteErgodicOptimization}
Let $(M,f)$ be a topological dynamical system, $\Lambda \subseteq M$ be an $f$-invariant compact set, $U\supseteq \Lambda$ be an open neighborhood of $\Lambda$, and  $\phi : U \to \mathbb{R}$ be a continuous function. 
\begin{enumerate}
\item \label{item01:BasicErgodicOptimization} The {\it ergodic minimizing value} of $\phi$ restricted to $\Lambda$ is the quantity
\begin{equation} \label{Equation:ErgodicMinimizingValue}
\bar \phi_\Lambda := \lim_{n\to+\infty} \frac{1}{n} \inf_{x \in\Lambda} \sum_{k=0}^{n-1} \phi \circ f(x).
\end{equation}
\item \label{item02:BasicErgodicOptimization} A continuous  function $u : U \to \mathbb{R}$ is said to be a {\it subaction} if
\begin{equation} \label{Equation:Subaction}
\forall\, x \in U, \ \phi(x) - \bar \phi_\Lambda \geq u \circ f(x) - u(x).
\end{equation}
\item \label{item03:BasicErgodicOptimization} A function $\psi$ of the form $\psi = u \circ f - u$ for some $u$ is called a coboundary.
\item The Lipschitz constant of  $\phi$ is the number
\[
\Lip(\phi) := \sup_{x,y \,\in U, \ x\not= y} \frac{|\phi(y)-\phi(x)|}{d(x,y)},
\]
where $d(\cdot,\cdot)$ is the distance  associated to the Finsler norm.
\end{enumerate}
\end{definition}

Our main result is the following.

\begin{theorem} \label{Theorem:DiscreteSubactionExistence}
Let $(M,f)$ be a $C^1$ dynamical system, $\Lambda \subseteq M$ be a locally maximal hyperbolic compact set, $\phi : M \to \mathbb{R}$ be a Lipschitz continuous function, and $\bar \phi_\Lambda$ be the ergodic minimizing value of $\phi$ restricted to $\Lambda$. Then there exists an open set $\Omega$ containing $\Lambda$ and a Lipschitz continuous function $u:\Omega \to \mathbb{R}$  such that
\[
\forall\, x \in\Omega, \quad \phi(x) - \bar \phi_\Lambda \geq u \circ f(x) - u(x).
\]
Moreover, $\Lip(u) \leq K_\Lambda \Lip(\phi)$ for some constant $K_\Lambda$ depending only on the hyperbolicity of $f$ on $\Lambda$. The constant $K_\Lambda$ is semi-explicit
\[
K_\Lambda=\max \Big\{\frac{(N_{AS}+1)\diam(\Omega_{AS})}{\varepsilon_{AS}},\ K_{APS} \Big\}
\]
where  $\varepsilon_{AS}, K_{APS}$ and $N_{AS}$ are defined in  \ref{Theorem:AnosovShadowingImproved},  \ref{Proposition:AnosovPeriodicShadowingLemma}, and   \ref{lemma:FinitenessPseudoBirkhoffSums}.
\end{theorem}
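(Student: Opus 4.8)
The plan is to construct the subaction $u$ as a fixed point / limit of a Lax-Oleinik type operator, then to upgrade the a priori continuity of $u$ to Lipschitz continuity using the shadowing and periodic shadowing lemmas for hyperbolic sets. Throughout I normalize $\phi$ by replacing it with $\phi - \bar\phi_\Lambda$, so that the ergodic minimizing value becomes $0$ and the target inequality reads $\phi(x) \geq u\circ f(x) - u(x)$ on a neighborhood $\Omega$ of $\Lambda$.

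**Step 1: The Lax-Oleinik (backward) operator and its fixed point.** On a suitable neighborhood $U$ of $\Lambda$ I would introduce, for $x$ near $\Lambda$, the quantity
\[
u(x) := \inf \Big\{ \sum_{k=0}^{n-1} \phi(x_k) \ : \ n \geq 0, \ (x_0,\dots,x_n) \text{ a finite pseudo-orbit with } x_n = x \Big\},
\]
i.e. the minimal $\phi$-cost to reach $x$ along (pseudo-)orbits shadowed in $\Lambda$. The point of allowing pseudo-orbits (with an error tolerance $\varepsilon_{AS}$ coming from Theorem~\ref{Theorem:AnosovShadowingImproved}) rather than genuine orbits is that it makes the infimum automatically satisfy the subaction inequality $\phi(x) \geq u(f(x)) - u(x)$ by concatenation, and it makes $u$ defined on a genuine open set $\Omega$, not just on $\Lambda$. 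The non-triviality (finiteness, $u > -\infty$) of this infimum is exactly where I expect to use Proposition~\ref{Proposition:AnosovPeriodicShadowingLemma} and Lemma~\ref{lemma:FinitenessPseudoBirkhoffSums}: the hypothesis $\bar\phi_\Lambda = 0$ together with the periodic shadowing lemma forces every Birkhoff sum along a closed pseudo-orbit in $\Lambda$ to be bounded below by $-N_{AS}\,\diam(\Omega_{AS})\Lip(\phi)$ (a long pseudo-loop can be shadowed by a genuine periodic orbit, whose Birkhoff sum is $\geq 0$, up to a controlled error). Hence the infimum over pseudo-orbits cannot run off to $-\infty$; it is bounded by a constant times $\Lip(\phi)$.

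**Step 2: Lipschitz estimate.** This is the technical heart and the main obstacle. Given two points $x,y \in \Omega$ that are close, I want $|u(x) - u(y)| \leq K_\Lambda \Lip(\phi)\, d(x,y)$. The strategy: take a near-optimal pseudo-orbit ending at $x$; I want to produce a comparable pseudo-orbit ending at $y$ whose cost exceeds the one for $x$ by at most $K_\Lambda\Lip(\phi)\,d(x,y)$. If $d(x,y)$ is smaller than the shadowing tolerance $\varepsilon_{AS}$, one can simply append the single jump from (the last point of the pseudo-orbit, or rather a nearby point) to $y$, paying $\Lip(\phi)\,d(x,y)$ directly; more carefully one modifies the last $O(1)$ steps of the pseudo-orbit so that the accumulated error stays within $\varepsilon_{AS}$, and the number of modified steps times $\diam(\Omega_{AS})$ over $\varepsilon_{AS}$ is what produces the first term $(N_{AS}+1)\diam(\Omega_{AS})/\varepsilon_{AS}$ in $K_\Lambda$. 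The hyperbolic (contraction/expansion) structure is what guarantees that a local perturbation of the endpoint only needs a bounded-length correction of the pseudo-orbit rather than propagating indefinitely. For $d(x,y)$ of order $\geq \varepsilon_{AS}$ one covers $\Omega$ by finitely many $\varepsilon_{AS}$-balls and uses a chaining argument, again with a constant controlled by $\diam(\Omega_{AS})/\varepsilon_{AS}$; this is where $K_{APS}$ enters as the second term in the max.

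**Step 3: Conclusion.** Once $u$ is shown to be finite and Lipschitz on $\Omega$ with $\Lip(u)\leq K_\Lambda\Lip(\phi)$, the subaction inequality $\phi(x) \geq u(f(x)) - u(x)$ on $\Omega$ holds by the concatenation property noted in Step~1 (possibly after shrinking $\Omega$ so that $f(\Omega)$ still lies in the domain where the pseudo-orbit construction is valid). Undoing the normalization restores the $-\bar\phi_\Lambda$ term, giving exactly the statement of Theorem~\ref{Theorem:DiscreteSubactionExistence}. I expect the bookkeeping of the neighborhoods $U \supseteq \Omega \supseteq \Lambda$ and the interplay of the three constants $\varepsilon_{AS}, N_{AS}, K_{APS}$ — ensuring they can be chosen compatibly and depend only on the hyperbolicity data $(\lambda^s,\lambda^u,C_\Lambda)$ and not on $\phi$ — to be the most delicate part to get exactly right, even though each individual estimate is standard hyperbolic-dynamics fare.
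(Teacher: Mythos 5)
Your overall plan—minimize a $\phi$-cost over pseudo-orbits ending at $x$, then use the shadowing and periodic-shadowing lemmas to control finiteness and Lipschitz regularity—is the right family of ideas, and it is indeed what the paper does via a Lax--Oleinik operator. However, your Step~1 is missing the one device that makes the paper's argument work, and without it the construction collapses.

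You define
\[
u(x) = \inf\Big\{\sum_{k=0}^{n-1}\phi(x_k) : (x_0,\dots,x_n=x)\ \text{an $\varepsilon_{AS}$-pseudo-orbit}\Big\}
\]
with no cost attached to the pseudo-orbit errors $d(f(x_k),x_{k+1})$. This infimum is identically $-\infty$. Indeed, take any periodic pseudo-orbit $(x_0,\dots,x_N=x_0)$ with some total error $\delta=\sum_k d(f(x_{k-1}),x_k)>0$. The periodic shadowing estimate gives a true periodic orbit $p$ with $\sum_i d(x_i,f^i(p))\le K_{APS}\,\delta$, hence
\[
\sum_{i=0}^{N-1}\phi(x_i)\ \ge\ \sum_{i=0}^{N-1}\phi(f^i(p))\ -\ \Lip(\phi)\,K_{APS}\,\delta\ \ge\ -\Lip(\phi)\,K_{APS}\,\delta,
\]
and equality can be approached. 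Repeating this loop $m$ times multiplies the left side by $m$ and $\delta$ by $m$, so the Birkhoff sum decreases linearly in the total length. Your claimed lower bound $-N_{AS}\diam(\Omega_{AS})\Lip(\phi)$ is not uniform in $n$: the ``controlled error'' of shadowing is controlled by the \emph{total} pseudo-error, which you have not bounded. Restricting the total error instead of the per-step error would rescue finiteness but would destroy the concatenation property you rely on for the subaction inequality, and would wreck the Lipschitz argument.

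The paper's fix is exactly the ingredient you omitted: the Lax--Oleinik operator is defined with the per-step penalty $C\,d(f(x_i),x_{i+1})$ added to $\phi(x_i)-\bar\phi_\Lambda$, so that one minimizes
\[
\sum_{i}\big(\phi(x_i)-\bar\phi_\Lambda+C\,d(f(x_i),x_{i+1})\big).
\]
Choosing $C=K_\Lambda\Lip(\phi)$ accomplishes two things simultaneously. First, the sum is bounded below (the ``positive Liv\v{s}ic criteria'', Proposition~\ref{Proposition:ValidityDiscreteCriteria}): the lower bound $-\Lip(\phi)K_{APS}\delta$ from the periodic shadowing lemma is now compensated by $+C\delta$, and the general pseudo-orbit is chopped into near-loops via a covering argument (Lemma~\ref{lemma:PseudoPeriodicSegment}), which is where $N_{AS}$ and $\diam(\Omega_{AS})/\varepsilon_{AS}$ enter $K_\Lambda$. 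Second, the $C$-Lipschitz bound on the fixed point is \emph{immediate} from the triangle inequality applied to $C\,d(f(x),\cdot)$ (Part~1 of the proof of Proposition~\ref{proposition:WeakKAMsolution}), so the delicate endpoint-perturbation argument you sketch in Step~2 is not needed. Also essential to the finiteness argument—and absent from your sketch—is the improved, \emph{summed} version of the shadowing lemma (Theorem~\ref{Theorem:AnosovShadowingImproved}), which bounds $\sum_i d(x_i,f^i(y))$ by the sum of pseudo-errors rather than $n$ times the sup; the classical sup-shadowing bound would again yield an $O(n)$ defect.

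In short: your mechanism (concatenation, shadowing, finite cover) is in the right spirit, but the missing penalty term $C\,d(f(\cdot),\cdot)$ is not a cosmetic detail—it is the lever that makes both finiteness and the Lipschitz bound hold, and your finiteness claim as written is false.
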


\begin{corollary} \label{corollary:AnosovPositiveLivsic}
Let $(M,f)$ be a $C^1$ dynamical system, $\Lambda \subseteq M$ be a locally maximal hyperbolic compact set, and $\phi : M \to \mathbb{R}$ be a Lipschitz continuous function. Assume the Birkhoff sum of $\phi$ on every periodic orbits on $\Lambda$ is non negative. Then there exist an open neighborhood $\Omega$ of $\Lambda$, a Lipschitz continuous function $u:\Omega \to \mathbb{R}$, such that 
\[
\forall\, x \in\Omega, \ \phi(x) - u \circ f(x) + u(x) \geq 0.
\] 
\end{corollary}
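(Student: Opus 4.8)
The plan is to deduce Corollary~\ref{corollary:AnosovPositiveLivsic} from Theorem~\ref{Theorem:DiscreteSubactionExistence}. The only thing to check is that the hypothesis ``the Birkhoff sum of $\phi$ over every periodic orbit in $\Lambda$ is non negative'' forces $\bar\phi_\Lambda\geq0$. Granting this, Theorem~\ref{Theorem:DiscreteSubactionExistence} supplies an open set $\Omega\supseteq\Lambda$ and a Lipschitz function $u:\Omega\to\mathbb{R}$ with $\phi-\bar\phi_\Lambda\geq u\circ f-u$ on $\Omega$; since $\bar\phi_\Lambda\geq0$ this yields $\phi-u\circ f+u\geq\bar\phi_\Lambda\geq0$ on $\Omega$, which is exactly the asserted inequality (and $u$ is automatically Lipschitz, with $\Lip(u)\leq K_\Lambda\Lip(\phi)$).

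To prove $\bar\phi_\Lambda\geq0$ I would first recall the ergodic optimization variational principle. Writing $S_n\phi=\sum_{k=0}^{n-1}\phi\circ f^k$ and $a_n=\inf_{x\in\Lambda}S_n\phi(x)$, superadditivity of $(a_n)$ --- a consequence of $S_{m+n}\phi(x)=S_m\phi(x)+S_n\phi(f^m x)$ together with $f(\Lambda)=\Lambda$ --- shows that the limit in \eqref{Equation:ErgodicMinimizingValue} exists and equals $\sup_n a_n/n$, and a standard weak-$*$ compactness argument on the empirical measures of near-optimal orbits identifies it with $\min_\mu\int_\Lambda\phi\,d\mu$, the minimum being over $f$-invariant Borel probability measures carried by $\Lambda$. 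Hence it suffices to prove $\int\phi\,d\mu\geq0$ for every such $\mu$, and by the ergodic decomposition we may assume $\mu$ ergodic.

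For an ergodic $\mu$, Birkhoff's theorem together with Poincaré recurrence provides, for every $\eta>0$ and $\delta>0$, an integer $n$ as large as desired and a point $x\in\Lambda$ with $|\tfrac1n S_n\phi(x)-\int\phi\,d\mu|<\eta$ and $d(f^n x,x)<\delta$. The finite segment $(x,fx,\dots,f^{n-1}x)$ is then a $\delta$-pseudo-periodic orbit contained in $\Lambda$, so Proposition~\ref{Proposition:AnosovPeriodicShadowingLemma} produces a genuine periodic point $p$, lying in $\Lambda$ because $\Lambda$ is locally maximal, whose orbit $\varepsilon(\delta)$-shadows the segment with $\varepsilon(\delta)\to0$ as $\delta\to0$. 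Since $\phi$ is Lipschitz, $|S_n\phi(p)-S_n\phi(x)|\leq n\,\Lip(\phi)\,\varepsilon(\delta)$, and the hypothesis $\tfrac1n S_n\phi(p)\geq0$ gives $\int\phi\,d\mu\geq\tfrac1n S_n\phi(x)-\eta\geq-\Lip(\phi)\varepsilon(\delta)-\eta$. Letting $\delta\to0$ and $\eta\to0$ gives $\int\phi\,d\mu\geq0$, hence $\bar\phi_\Lambda\geq0$ and the corollary follows.

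The main obstacle is this closing-lemma step: one must keep the pseudo-periodic orbit inside the neighborhood where hyperbolicity is controlled, ensure the shadowing orbit is genuinely periodic, and verify it lies in $\Lambda$ --- this last point is precisely where local maximality is used --- and all of this without assuming $f$ invertible. Since Proposition~\ref{Proposition:AnosovPeriodicShadowingLemma} is designed for the present $C^1$, possibly non-invertible setting, it should apply directly. A shorter but less self-contained alternative is to invoke the density of periodic-orbit measures among the $f$-invariant measures supported on a locally maximal hyperbolic set; the essential content is the same.
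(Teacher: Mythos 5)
Your proof is correct. The paper does not spell out a proof of this corollary, so there is no official argument to compare against; what the corollary requires, once Theorem~\ref{Theorem:DiscreteSubactionExistence} is in hand, is exactly the inequality $\bar\phi_\Lambda\geq 0$, and your closing-lemma argument establishes it. All the steps check out: the superadditivity of $a_n=\inf_{x\in\Lambda}S_n\phi(x)$ does rely on $f(\Lambda)=\Lambda$; the variational characterization $\bar\phi_\Lambda=\min_\mu\int\phi\,d\mu$ over invariant probabilities on $\Lambda$ is standard; Birkhoff together with Poincar\'e recurrence produces, for $\mu$-a.e.\ point, arbitrarily long near-periodic orbit segments with Birkhoff average close to $\int\phi\,d\mu$; Proposition~\ref{Proposition:AnosovPeriodicShadowingLemma} applies to the associated periodic $\delta$-pseudo orbit once $\delta<\epsilon_{AS}$ and returns a genuine period-$n$ point $p\in\Lambda$ (local maximality is indeed what puts $p$ in $\Lambda$, and no invertibility of $f$ is needed since the periodic orbit of $p$ has a full bi-infinite orbit inside $\bar U$); and the shadowing estimate controls $|\tfrac1n S_n\phi(p)-\tfrac1n S_n\phi(x)|$ by $\Lip(\phi)K_{APS}\delta$, so the hypothesis forces $\int\phi\,d\mu\geq -\Lip(\phi)K_{APS}\delta-\eta$ and hence $\int\phi\,d\mu\geq 0$.

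That said, there is a more self-contained route that stays inside the paper's own machinery and avoids invoking the variational principle, ergodic decomposition, Birkhoff, and Poincar\'e: re-run Lemmas~\ref{lemma:FinitenessPeriodicPseudoBirkhoffSums} and \ref{lemma:FinitenessPseudoBirkhoffSums} with $\bar\phi_\Lambda$ replaced by $0$. The only place $\bar\phi_\Lambda$ enters the proof of Lemma~\ref{lemma:FinitenessPeriodicPseudoBirkhoffSums} is the final inequality $\sum_{i=0}^{n-1}\bigl(\phi\circ f^i(p)-\bar\phi_\Lambda\bigr)\geq 0$ for the shadowing periodic point $p$; under the corollary's hypothesis this holds with $0$ in place of $\bar\phi_\Lambda$. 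The proof of Lemma~\ref{lemma:FinitenessPseudoBirkhoffSums} then goes through verbatim, yielding $\sum_{i=0}^{n-1}\bigl(\phi(x_i)+C\,d(f(x_i),x_{i+1})\bigr)\geq -\Lip(\phi)\,\delta_{AS}$ for every $\epsilon_{AS}$-pseudo orbit in $\Omega_{AS}$. Specializing to a true orbit segment $(x,fx,\dots,f^{n-1}x)$ with $x\in\Lambda$ (a $0$-pseudo orbit) gives $S_n\phi(x)\geq -\Lip(\phi)\,\delta_{AS}$ uniformly in $x\in\Lambda$ and $n$, whence $\bar\phi_\Lambda=\lim_n\frac1n\inf_{x\in\Lambda}S_n\phi(x)\geq 0$. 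Your route buys generality and uses textbook ergodic theory; the internal route is shorter given what the paper already proves and makes transparent that the only input is the periodic shadowing estimate of Proposition~\ref{Proposition:AnosovPeriodicShadowingLemma}, which both arguments ultimately rely on.
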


A weaker version of Theorem \ref{Theorem:DiscreteSubactionExistence} was obtained in \cite{LopesThieullen2003}, \cite{PollicottSharp2004}, and \cite{LopesRosasRuggiero2007}, where the subaction is only H\"older. Bousch   claims in \cite{Bousch2011}  that the subaction can be chosen Lipschitz continuous as a corollary of its original approach, but the proof does not appear to us very obvious.  Huang, Lian, Ma, Xu, and Zhang  proved in \cite[Appendix A]{HuangLianMaXuZhang2019_1}  a weaker version, namely $\frac{1}{N}\sum_{k=0}^{N-1} [\phi-\bar\phi ] \geq u_N \circ f^N - u_N$ for some integer $N\geq1$ and some $u_N$ Lipschitz but by invoking again \cite{Bousch2011}. A similar theorem can be proved for Anosov flows, see \cite{SuThieullen2021}.

The plan of the proof is the following. We revisit the {\it Anosov shadowing lemma} in section \ref{section:AnosovShadowingLemmaRevisited}, Theorem \ref{Theorem:AnosovShadowingImproved},  by bounding from the above the sum of the distances between a pseudo orbit and a true shadowed orbit in terms of the sum of the pseudo errors. We  improve in section \ref{section:DiscreteLaxOleinikOperator} Bousch's techniques of the construction of a coboundary by introducing a new {\it Lax-Oleinik operator}, Definition  \ref{definition:LaxOleinikOperator}, and showing under the assumption of {\it positive Liv\v{s}ic criteria} the existence of a stronger notion of calibrated subactions, Proposition \ref{proposition:WeakKAMsolution}. We then check in section \ref{section:ProofDiscreteCase} that a locally maximal hyperbolic set satisfies the positive Liv\v{s}ic criteria and prove the main result. The proof of Theorem \ref{Theorem:AnosovShadowingImproved} requires a precise description of the notions of {\it adapted local hyperbolic maps and graph transforms with respect to a family of adapted charts}. We revisit these notions in  Appendix \ref{Appendix:LocalHyperbolicDynamics}.  Notice that we do not assume $f$ to be invertible nor transitive. 

\section{An improved shadowing lemma for maps}
\label{section:AnosovShadowingLemmaRevisited}

We show in this section an improved version of the shadowing lemma that will be needed to check the existence of a fixed point of the Lax-Oleinik operator. 

\begin{definition}
Let $(M,f)$ be a topological dynamical system. A sequence $(x_i)_{0 \leq  i \leq n}$ of points of $M$ is said to be an $\epsilon$-pseudo orbit (with respect to the dynamics $f$) if 
\[
\forall\, i \in \llbracket 0, n-1 \rrbracket, \quad d(f(x_i),x_{i+1})\leq \epsilon.
\]
The sequence is said to be a periodic $\epsilon$-pseudo orbit if $x_n=x_0$.
\end{definition}

We first recall the basic Anosov shadowing property.

\begin{lemma}[Anosov shadowing lemma] \label{Theorem:AnosovShadowingLemma}
Let $(M,f)$ be a $C^1$ dynamical system and $\Lambda \subseteq M$ be a compact hyperbolic set. Then there exist constants $\epsilon_{AS}>0$, $K_{AS}\geq1$, and $\lambda_{AS}>0$, such that for every $n\geq1$, for every $\epsilon_{AS}$-pseudo orbit $(x_i)_{0\leq i\leq n}$ of the neighborhood  $\Omega_{AS}= \{ x \in M : d(x,\Lambda) < \epsilon_{AS} \}$, there exists a point $y\in M$ such that
\begin{gather}
\max_{0 \leq i \leq n} d(x_i,f^i(y)) \leq K_{AS}\max_{1 \leq k \leq n} d(f(x_{k-1}) ,x_{k}). \label{item:AnosovShadowingLemma_3}
\end{gather}
\end{lemma}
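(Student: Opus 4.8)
The plan is to reduce the statement to a hyperbolic fixed-point problem read in a uniform family of adapted charts along the pseudo orbit, tracking the \emph{linear} dependence of the shadowing correction on the pseudo errors. Since $\Lambda$ is compact and $f$ is $C^1$, the splitting $x\mapsto E^{u}_\Lambda(x)\oplus E^{s}_\Lambda(x)$ and $Df$ are uniformly continuous near $\Lambda$, so (this is the content imported from Appendix \ref{Appendix:LocalHyperbolicDynamics}) there exist a radius $\rho>0$ and, for each $p\in\Lambda$, an adapted chart $\varphi_p$ centered at $p$ in which the splitting at $p$ becomes the coordinate splitting $\mathbb{R}^{d^u}\times\mathbb{R}^{d^s}$, the Finsler distance is uniformly bi-Lipschitz to the max of the two Euclidean norms, and all charts obey uniform bounds. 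Given an $\epsilon_{AS}$-pseudo orbit $(x_i)_{0\le i\le n}$ in $\Omega_{AS}$, pick $p_i\in\Lambda$ with $d(x_i,p_i)<\epsilon_{AS}$, set $\varphi_i:=\varphi_{p_i}$ and $F_i:=\varphi_i\circ f\circ\varphi_{i-1}^{-1}$. After passing to an adapted metric, $F_i$ is defined on a ball of radius $\rho$ and splits as $F_i(v)=A_iv+R_i(v)$ with $A_i=A_i^u\oplus A_i^s$, $\|(A_i^u)^{-1}\|\le e^{-\lambda}$ and $\|A_i^s\|\le e^{-\lambda}$ for some $\lambda>0$ depending only on $\lambda^u,\lambda^s,C_\Lambda$, while $\Lip(R_i)\le\kappa$ can be made as small as we wish by shrinking $\rho$, uniformly in $i$.

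Next set $\xi_i:=\varphi_i(x_i)$, so $|\xi_i|\le c\,\epsilon_{AS}$, and $e_i:=|F_i(\xi_{i-1})-\xi_i|$, which is comparable to $d(f(x_{i-1}),x_i)$ by the bi-Lipschitz property of the charts. An orbit shadowing $(x_i)$ corresponds to a sequence $(\eta_i)_{0\le i\le n}$ with $|\eta_i|\le\rho$ and $F_i(\eta_{i-1})=\eta_i$; writing $\eta_i=\xi_i+h_i$ turns this into
\[
h_i=A_ih_{i-1}+b_i(h_{i-1}),\qquad b_i(h):=\big(F_i(\xi_{i-1})-\xi_i\big)+\big(R_i(\xi_{i-1}+h)-R_i(\xi_{i-1})\big),
\]
so that $|b_i(0)|=e_i$ and $|b_i(h)-b_i(0)|\le\kappa|h|$. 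Splitting $h_i=(h_i^u,h_i^s)$, the stable part is run \emph{forward}, $h_i^s=A_i^sh_{i-1}^s+b_i^s(h_{i-1})$ from $h_0^s=0$, and the unstable part \emph{backward}, $h_{i-1}^u=(A_i^u)^{-1}\big(h_i^u-b_i^u(h_{i-1})\big)$ from $h_n^u=0$; evaluating the $b_j$ at the input sequence, this defines an operator $\mathcal{T}$ on the set of sequences with $\|h\|_\infty:=\max_{0\le i\le n}|h_i|\le\rho$.

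Summing the two geometric series of ratio $e^{-\lambda}<1$ gives $\|\mathcal{T}h\|_\infty\le(1-e^{-\lambda})^{-1}\big(\max_i e_i+\kappa\|h\|_\infty\big)$ and $\|\mathcal{T}h-\mathcal{T}h'\|_\infty\le\kappa(1-e^{-\lambda})^{-1}\|h-h'\|_\infty$. Choosing $\kappa$ small, then $\epsilon_{AS}$ and $\rho$ small enough that $\max_i e_i\le\epsilon_{AS}$ forces $(1-e^{-\lambda})^{-1}(c\,\epsilon_{AS}+\kappa\rho)\le\rho$, $\mathcal{T}$ is a contraction of the closed ball of radius $\rho$, with a unique fixed point $h^\star$ satisfying $F_i(\xi_{i-1}+h^\star_{i-1})=\xi_i+h^\star_i$ for $1\le i\le n$ and, from the self-map estimate applied at $h^\star$, $\|h^\star\|_\infty\le(1-e^{-\lambda}-\kappa)^{-1}\max_i e_i$. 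Setting $y:=\varphi_0^{-1}(\xi_0+h^\star_0)$ one gets $f^i(y)=\varphi_i^{-1}(\xi_i+h^\star_i)$ for all $0\le i\le n$ (each iterate remaining in the domain of $\varphi_i$), hence $d(x_i,f^i(y))\le C\,|h^\star_i|$ with $C$ the uniform Lipschitz bound of the $\varphi_i^{-1}$, and therefore $\max_{0\le i\le n}d(x_i,f^i(y))\le K_{AS}\max_{1\le k\le n}d(f(x_{k-1}),x_k)$ with $K_{AS}$ built from $C$, $\lambda$, $\kappa$ only, i.e. from the hyperbolicity data on $\Lambda$.

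The step I expect to be the main obstacle is handling the fact that $f$ is not assumed invertible: $T_xf$ only embeds $E^s(x)$ into $E^s(f(x))$ and $f$ itself may be many-to-one, so the argument must never invert $f$. The backward recursion above uses only $(A_i^u)^{-1}$, the inverse of the \emph{expanding linear block}, which always exists, so non-invertibility is harmless \emph{provided} the adapted charts and the decomposition $F_i=A_i+R_i$ are set up carefully; justifying this uniform setup — in particular a uniform chart radius $\rho$ on which each remainder $R_i$ is $\kappa$-Lipschitz with $\kappa$ arbitrarily small — is the technical heart and is exactly where compactness of $\Lambda$ and uniform continuity of $Df$ near $\Lambda$ are used (and the place one defers to Appendix \ref{Appendix:LocalHyperbolicDynamics}).
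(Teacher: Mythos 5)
Your argument is correct in substance, but it is a genuinely different proof from the one the paper runs. The paper does not prove Lemma \ref{Theorem:AnosovShadowingLemma} by a contraction on sequence space: it states the basic lemma without proof and obtains the max-estimate \eqref{item:AnosovShadowingLemma_3} as item \ref{Item:AdaptedAnosovShadowingLemma_04} of Theorem \ref{Theorem:AdaptedAnosovShadowingLemma}, whose proof is a purely geometric grid construction — unstable graphs $G_{i,k}$ obtained by iterating the forward graph transform of the horizontal graphs through the $q_i$, points $Q_i(j,k)$ defined by pulling back along unstable graphs and pushing forward, and the shadowing point taken as $p_i=Q_i(n-i,i)$, with all estimates coming from the cone equivariance of Lemma \ref{Lemma:EquivarianceUnstableCone}. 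Your route is the classical Anosov--Bowen one: write the correction $h_i$ as a fixed point of an operator that runs the stable component forward and the unstable component backward, and invoke the Banach fixed point theorem. For the max-estimate alone your proof is shorter and perfectly adequate, and you correctly identify that non-invertibility is harmless because only the expanding block $A_i^u$ (which is onto $E^u$ by the equality $T_xf(E^u(x))=E^u(f(x))$ and injective by expansion) is ever inverted. What the paper's heavier construction buys is the refined, exponentially weighted estimate \eqref{item:AnosovShadowingLemma_1} and its summed form \eqref{item:AnosovShadowingLemma_2}, which are the actual novelty needed for the Lipschitz subaction; your contraction argument as written only controls $\|h^\star\|_\infty$ by $\max_i e_i$ and would need to track the decay of the linearized resolvent to recover the weighted sum. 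Two small points to tighten: the linear part $A_i=DF_i(0)$ is not exactly block diagonal, since $f(p_{i-1})\neq p_i$ in general, so the off-diagonal blocks (of norm at most $\eta$ in the paper's Definition \ref{Definition:AdaptedLocalHyperbolicMap}) must be absorbed into the $\kappa$-Lipschitz remainder; and the claim that $f^i(y)$ stays in the domain of $\varphi_i$ for all $i$ requires the admissible-transition bookkeeping carried out in the proof of Theorem \ref{Theorem:AnosovShadowingImproved}.
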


Equation \eqref{item:AnosovShadowingLemma_3} is the standard conclusion of the shadowing lemma. We say that the orbit $(y,f(y), \ldots, f^{n}(y))$ shadows the pseudo orbit $(x_i)_{i=0}^n$.

\begin{theorem}[Improved Anosov shadowing lemma] \label{Theorem:AnosovShadowingImproved}
Let $(M,f,\Lambda)$ as in Lemma \ref{Theorem:AnosovShadowingLemma}. Then one can choose $\epsilon_{AS}>0$, $K_{AS}\geq1,\lambda_{AS}>0$, and $y\in M$ so that 
\begin{gather}
\forall\, i \in \llbracket 0, n \rrbracket, \ d(x_i,f^i(y)) \leq K_{AS} \sum_{k=1}^{n} d(f(x_{k-1}),x_{k}) \, \exp(-\lambda_{AS} |k-i|), \label{item:AnosovShadowingLemma_1} \\
\sum_{i=0}^{n}d(x_i,f^i(y)) \leq  K_{AS} \sum_{k=1}^{n} d(f(x_{k-1}),x_{k})  \label{item:AnosovShadowingLemma_2}.
\end{gather}
\end{theorem}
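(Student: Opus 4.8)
The plan is to upgrade the qualitative shadowing of Lemma \ref{Theorem:AnosovShadowingLemma} to the quantitative, exponentially weighted bound \eqref{item:AnosovShadowingLemma_1}, from which \eqref{item:AnosovShadowingLemma_2} follows by summing a geometric series ($\sum_{i=0}^n e^{-\lambda_{AS}|k-i|} \leq \frac{2}{1-e^{-\lambda_{AS}}}$, which gets absorbed into a redefined $K_{AS}$). The core idea is to replace the fixed-point argument for shadowing by one carried out in a weighted sequence space. Working in the adapted charts of Appendix \ref{Appendix:LocalHyperbolicDynamics}, I would linearize: near $\Lambda$ the dynamics splits into a uniform contraction on $E^s$ and a uniform expansion on $E^u$, and the search for a shadowing orbit becomes the search for a fixed point of an affine-like operator $T$ acting on sequences $(v_i)_{0\le i\le n}$ of ``corrections'' in the charts, where $v_i$ lives in $T_{x_i}M = E^u(x_i)\oplus E^s(x_i)$ and the inhomogeneous term at step $k$ is the pseudo-error $\delta_k := d(f(x_{k-1}),x_k)$ (more precisely its chart representative). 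The stable component of $v_i$ is determined by forward iteration from $v_0$ and accumulates errors $\delta_k$ for $k\le i$ with contraction factor $e^{\lambda^s(i-k)}$; the unstable component is determined by backward iteration from $v_n$ and accumulates errors $\delta_k$ for $k>i$ with factor $e^{-\lambda^u(k-i)}$. Each of these is already an exponentially weighted sum of the $\delta_k$, which is exactly the shape of \eqref{item:AnosovShadowingLemma_1}.

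Concretely, I would proceed as follows. First, fix $\lambda_{AS} \in (0,\min(\lambda^u,-\lambda^s))$ and introduce the Banach space of sequences with norm $\|(v_i)\| := \max_i \big( w_i^{-1}|v_i|\big)$ for a suitable weight, or — cleaner — work directly with the explicit series solution. Second, set up the graph-transform/fixed-point scheme on the product of local charts along the pseudo orbit: the $\epsilon_{AS}$-pseudo-orbit condition guarantees that consecutive charts overlap and the local hyperbolic maps are well-defined and uniformly hyperbolic, so the operator $T$ is a contraction (in the unweighted sup norm this is the classical proof of Lemma \ref{Theorem:AnosovShadowingLemma}). Third — the key new estimate — instead of bounding the fixed point by $\|T(0)\|_\infty$, bound its components termwise: show $|v_i^s| \le C\sum_{k\le i} e^{\lambda^s(i-k)}\delta_k$ and $|v_i^u| \le C\sum_{k>i} e^{-\lambda^u(k-i)}\delta_k$ by iterating the defining recursions and using the uniform bounds $C_\Lambda, e^{\lambda^s}, e^{-\lambda^u}$. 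Adding the two and replacing $\lambda^u, -\lambda^s$ by the smaller $\lambda_{AS}$ gives $|v_i| \le K_{AS}\sum_{k=1}^n \delta_k\, e^{-\lambda_{AS}|k-i|}$. Fourth, translate back: $y := $ the point whose chart coordinate at time $0$ is $v_0$, and check that $d(x_i, f^i(y))$ is comparable (up to the chart distortion constant, again absorbed into $K_{AS}$) to $|v_i|$, using that the charts are bi-Lipschitz with uniform constants.

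The main obstacle I anticipate is the careful bookkeeping at the interface between the global manifold picture and the local linearized picture: one must ensure that (a) the family of adapted charts along an arbitrary $\epsilon_{AS}$-pseudo orbit in $\Omega_{AS}$ exists with uniform domain size and uniform bi-Lipschitz constants — this is the content of Appendix \ref{Appendix:LocalHyperbolicDynamics} and relies on $\Lambda$ being locally maximal so that pseudo orbits stay in a region of uniform hyperbolicity — and (b) the nonlinear remainder terms in the local maps (the difference between $f$ in charts and its linear part) do not destroy the exponential estimate. Point (b) is handled by the standard trick of shrinking $\epsilon_{AS}$ so the Lipschitz constant of the nonlinearity is small enough that it only perturbs $\lambda^u,-\lambda^s$ slightly, still leaving room for $\lambda_{AS}>0$; since $f$ is only $C^1$ one cannot use Taylor expansions and must instead invoke uniform continuity of $Tf$ on a compact neighborhood of $\Lambda$ to control the nonlinearity, which is why the statement is phrased for $C^1$ maps with constants depending only on the hyperbolicity data. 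The uniqueness of $y$ (up to the stable manifold, irrelevant here) and the fact that $y$ can be taken in $M$ rather than only in a chart follow from the contraction being global along the pseudo orbit. Everything else is routine geometric-series summation.
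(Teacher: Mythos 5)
Your proposal is correct in spirit but takes a genuinely different route from the paper. The paper's proof (via Theorem \ref{Theorem:AdaptedAnosovShadowingLemma}) is entirely geometric: it constructs a two-parameter grid of auxiliary points $Q_i(j,k)$ in the adapted charts, with $k$ counting applications of the unstable graph transform to the horizontal graph through $q_{i-k}$, takes $p_i := Q_i(n-i,i)$ as the shadowing orbit, and closes the estimate through a system of recursive inequalities for the stable heights $h_{i,j}$, using the cone invariance of Lemma \ref{Lemma:EquivarianceUnstableCone} to pass between indices. You instead propose an analytic argument: decompose the correction $v_i := p_i - q_i$ into stable and unstable components, solve forward for $v_i^s$ and backward for $v_i^u$, and read off the exponentially weighted bound termwise. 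Both proofs solve the same two-point boundary value problem; yours makes the forward/backward dichotomy explicit, while theirs embeds it into the graph-transform machinery that is reused for Theorem \ref{Theorem:AdaptedLocalUnstableManifold} and handles the non-invertibility of $f$ transparently, since only the expanding restriction of $f_i$ along $E^u$ is ever inverted.

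The one step you leave too vague to count as a proof is the claim that the bounds on $|v_i^s|$ and $|v_i^u|$ follow ``by iterating the defining recursions.'' The two recursions are coupled: the off-diagonal blocks $D_i^{u/s}$ and the nonlinear remainder $r_i := f_i(q_i+v_i) - f_i(q_i) - A_i v_i$, each of order $\eta\,|v_i|$, feed the full vector $v_i$ into both scalar recursions. If you iterate naively, using only the unweighted sup bound on $(v_i)$, the remainder contributes a tail $\sum_{k\le i}(\sigma^s)^{i-k}\,\eta\,\|v\|_\infty$ that does not decay in $|i-k|$, so the claimed termwise bound does not drop out. To close the loop one must either run the fixed-point iteration in the weighted norm $\|v\|_w := \sup_i |v_i|/w_i$ with $w_i := \sum_k \delta_k \, e^{-\lambda_{AS}|i-k|}$, verifying that $T(0)$ has finite weighted norm and that $T$ contracts in this norm (a convolution estimate $\sum_k e^{-\lambda_1|i-k|}\,w_k \le C\,w_i$, available precisely because $\lambda_{AS}<\lambda_1 := \min(\lambda^u,-\lambda^s)$), or establish $W := \sup_i |v_i|/w_i < \infty$ a priori and derive the self-improving inequality $W \le 1 + C\eta W$, whence $W \le (1-C\eta)^{-1}$ for $\eta$ small enough. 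Either route is routine once identified, but as written the proposal does not supply it; the paper's grid construction sidesteps the issue by never isolating a single component recursion. With that repair, your argument is complete and gives the same constants up to bookkeeping.
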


Equations \eqref{item:AnosovShadowingLemma_1} and \eqref{item:AnosovShadowingLemma_2} are new and fundamental for improving Bousch's approach \cite{Bousch2011}. The heart of the proof is done through the notion of adapted local charts. In appendix \ref{Appendix:LocalHyperbolicDynamics} we recall the notion of {\it adapted local dynamics} in which the dynamics is observed through the iteration of a sequence of maps which are uniformly hyperbolic with respect to a family of  norms that are adapted to the unstable/stable splitting and the constants of hyperbolicity.

The following Theorem \ref{Theorem:AdaptedAnosovShadowingLemma} is the technical counterpart of Theorem~\ref{Theorem:AnosovShadowingImproved}.
We consider a sequence of uniformly hyperbolic maps as described more rigorously in Appendix \ref{Appendix:LocalHyperbolicDynamics} 
\[
f_i : B_i(\rho) \to \mathbb{R}^d , \ B_i(\rho) \subset \mathbb{R}^d = E^u_i \oplus E_i^s =  E^u_{i+1} \oplus E_{i+1}^s, \ A_i = T_0 f_i,
\]
where $E_i^{u/s}$ are the unstable/stable vector spaces, $A_i$ is the tangent map of $f_i$ at the origin which is assumed to be uniformly hyperbolic with respect to an adapted norm $\| \cdot \|_i$ and constants of hyperbolicity $\sigma^s < 1 < \sigma^u$, $\eta>0$ is the size of the perturbation of the non linear term $\big(f_i(v) -f_i(0)-A_i\cdot v\big)$, $\rho>0$ is the size of the domain of definition of $f_i$, $B_i(\rho)$ is the ball of radius $\rho$ for the norm $\| \cdot \|_i$, and $\|f_i(0)\|_i \leq \epsilon(\rho)$ is the size of the shadowing constant with $\epsilon(\rho) \ll \rho$.

\begin{theorem}[Adapted Anosov shadowing lemma] \label{Theorem:AdaptedAnosovShadowingLemma}
Let $(f_i,A_i,E_i^{u/s},\| \cdot \|_i)_{i=0}^{n-1}$ be a family of adapted local hyperbolic maps  and $(\sigma^u,\sigma^s,\eta,\rho)$ be a set of hyperbolic constants as in Definition  \ref{Definition:AdaptedLocalHyperbolicMap}. Assume the stronger estimate
\[
\eta < \min \Big( \frac{(1-\sigma^s)^2}{12}, \frac{\sigma^u-1}{6} \Big).
\]
Define $\lambda_\Gamma$ and $K_\Gamma$  by,
\[
\exp(-\lambda_\Gamma) = \max \Big(\frac{\sigma^s+3\eta}{1-3\eta}, \frac{1}{\sigma^u-3\eta} \Big), \quad K_\Gamma = \frac{5}{\big(1-\exp(-\lambda_\Gamma)\big)^2}.
\]
Let $(q_i)_{i=0}^n$ be a ``pseudo sequence'' of points in the sense 
\[
\forall\, i \in \llbracket 0,n-1 \rrbracket,  \quad q_i \in B_{i}\Big(\frac{\rho}{2}\Big) \ \ \text{and} \ \  f_{i}(q_i) \in B_{{i+1}}\Big(\frac{\rho}{2}\Big).
\]
Then there exists a ``true sequence'' of points $(p_i)_{i=0}^n$, $p_i \in B_i(\rho)$, such that
\begin{enumerate}
\item \label{Item:AdaptedAnosovShadowingLemma_01} $\displaystyle \forall\, i \in \llbracket 0, n-1 \rrbracket, \ f_{i}(p_i) = p_{i+1}$, (the true orbit),
\item \label{Item:AdaptedAnosovShadowingLemma_02} $\displaystyle \forall\, i \in \llbracket 0, n \rrbracket, \ \|q_i-p_i \|_i \leq K_\Gamma \sum_{k=1}^{n} \|f_{k-1}(q_{k-1}) - q_{k} \|_{k} \exp(-\lambda_\Gamma|k-i|)$,
\item \label{Item:AdaptedAnosovShadowingLemma_03} $\displaystyle \sum_{i=0}^n \|q_i-p_i\|_i \leq  K_\Gamma \sum_{k=1}^{n}  \|f_{k-1}(q_{k-1}) - q_{k} \|_{k}$,
\item \label{Item:AdaptedAnosovShadowingLemma_04} $\displaystyle \max_{0 \leq i \leq n}  \|q_i-p_i\|_i \leq K_\Gamma \max_{1 \leq k \leq n}  \|f_{k-1}(q_{k-1}) - q_{k} \|_{k}$.
\end{enumerate}
Moreover assume $(f_i,A_i,E_i^{u/s},\| \cdot \|_i)_{i\in\mathbb{Z}}$ is $n$-periodic in the sense
\[
f_{i+n}=f_i, \ A_{i+n} = A_i, \ E_{i+n}^{u/s} = E_i^{u/s}, \ \| \cdot \|_{i+n} = \| \cdot \|_i,
\]
assume in addition that  $(q_i)_{i\in\mathbb{Z}}$ is a periodic pseudo sequence in the following sense
\[
\forall\,  i\in\mathbb{Z}, \ q_{i+n}=q_i, \ q_i \in B_i\Big(\frac{\rho}{2} \Big), \ f_{i-1}(q_{i-1}) \in B_i\Big(\frac{\rho}{2} \Big).
\]
Then there exists a periodic true sequence $(p_i)_{i\in\mathbb{Z}}$ satisfying 
\begin{enumerate}
\addtocounter{enumi}{4}
\item \label{Item:AdaptedAnosovShadowingLemma_05} $\displaystyle \forall\, i \in \mathbb{Z}, \ f_{i}(p_i) = p_{i+1}$, $p_{i+n}=p_i$,
\item \label{Item:AdaptedAnosovShadowingLemma_06} $\displaystyle \sum_{i=0}^{n-1} \|q_i-p_i\|_i \leq \tilde K_\Gamma \sum_{k=1}^{n}  \|f_{k-1}(q_{k-1}) - q_{k} \|_{k}$,
\end{enumerate}
with $\tilde K_\Gamma := K_\Gamma(1+\exp(-\lambda_\Gamma))/(1-\exp(-\lambda_\Gamma))$.
\end{theorem}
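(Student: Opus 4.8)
The plan is to turn the shadowing problem into a fixed‑point problem for the ``correction sequence'' and then to extract the exponentially weighted bound by a discrete Gronwall argument. Write $p_i=q_i+z_i$ and set $e_{i+1}:=f_i(q_i)-q_{i+1}$ for the pseudo‑errors, so that $\|e_{i+1}\|_{i+1}$ is precisely the quantity appearing (with $k=i+1$) on the right‑hand sides of \ref{Item:AdaptedAnosovShadowingLemma_02}--\ref{Item:AdaptedAnosovShadowingLemma_06}. The orbit relation $f_i(p_i)=p_{i+1}$ is then equivalent to the recursion
\[
z_{i+1} = A_i z_i + e_{i+1} + R_i(z_i), \qquad R_i(z) := f_i(q_i+z) - f_i(q_i) - A_i z,
\]
where, by the definition of an adapted local hyperbolic map (Definition \ref{Definition:AdaptedLocalHyperbolicMap}), $R_i(0)=0$, $R_i$ has Lipschitz constant at most $\eta$ on the relevant ball, and $A_i$ preserves the splitting, $A_i(E^u_i)=E^u_{i+1}$, $A_i(E^s_i)=E^s_{i+1}$, with $\|A_i|_{E^s_i}\|_i\le\sigma^s<1$ and $\|(A_i|_{E^u_i})^{-1}\|_{i+1}\le 1/\sigma^u<1$. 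The pseudo‑sequence hypothesis $q_i,f_i(q_i)\in B_i(\rho/2)$ together with $\epsilon(\rho)\ll\rho$ is exactly what keeps all iterates inside the domains of the $f_i$, so that these nonlinear estimates apply along the whole construction.

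First I would decouple the recursion along the splitting: projecting on $E^s$ and solving forward with boundary value $z_0^s=0$, and projecting on $E^u$ and solving backward with $z_n^u=0$ (in the periodic case one solves directly over $\mathbb{Z}$, the bi‑infinite geometric sums converging). This exhibits a solution as a fixed point $z=\Phi(z)$ of an operator of the form $(\Phi z)_i=\sum_k G_{ik}\big(e_k+R_{k-1}(z_{k-1})\big)$, where $G_{ik}$ is the Green kernel of the linearized cocycle, satisfying $\|G_{ik}\|\le C_0\,a^{|i-k|}$ with $a:=\max(\sigma^s,1/\sigma^u)<1$. On the Banach space of bounded sequences $\Phi$ is a contraction as soon as $\eta$ is small relative to the spectral gaps $1-\sigma^s$ and $\sigma^u-1$ --- this is the role of the hypothesis $\eta<\min\big((1-\sigma^s)^2/12,(\sigma^u-1)/6\big)$ --- so a shadowing orbit $(p_i)$ with the stated boundary values exists and is unique, which is \ref{Item:AdaptedAnosovShadowingLemma_01} (and, by uniqueness applied to the shifted sequence in the periodic case, \ref{Item:AdaptedAnosovShadowingLemma_05}).

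The core of the argument --- and the step I expect to be the main obstacle --- is upgrading the sup‑norm control to the exponentially weighted estimate \ref{Item:AdaptedAnosovShadowingLemma_02}. From the Green‑kernel formula for the fixed point, the scalars $d_i:=\|z_i\|_i$ and $\epsilon_k:=\|e_k\|_k$ satisfy a recursive inequality of the shape $d_i\le\sum_k a^{|i-k|}\big(\epsilon_k+\eta\,d_{k-1}\big)$, where the various projection norms and index shifts are what turn the bare ratio $a$ into the sharper rate $\exp(-\lambda_\Gamma)=\max\big(\frac{\sigma^s+3\eta}{1-3\eta},\frac{1}{\sigma^u-3\eta}\big)$ of the statement. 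I would then prove a scalar lemma: for any $b\in(a,1)$ one has the convolution bound $\sum_k a^{|i-k|}b^{|k-j|}\le\frac{3}{1-a/b}\,b^{|i-j|}$, and feeding the ansatz $d_i\le C\sum_k b^{|i-k|}\epsilon_k$ back into the recursive inequality shows that the least admissible $C$ obeys $C\le\frac{1}{1-\theta}$ with $\theta$ a fixed multiple of $\eta/(b(1-a/b))$. This is a legitimate bootstrap because $(d_i)$ is bounded and, in the finite (resp.\ periodic) range, $\sum_k b^{|i-k|}\epsilon_k$ is bounded below; taking $b=\exp(-\lambda_\Gamma)$ makes $\theta<1$ and, after bookkeeping the geometric series, $\frac{1}{1-\theta}\le K_\Gamma=5/(1-\exp(-\lambda_\Gamma))^2$, the factor $5$ and the square being the slack that absorbs the auxiliary constants. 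This yields \ref{Item:AdaptedAnosovShadowingLemma_02}.

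Finally, \ref{Item:AdaptedAnosovShadowingLemma_03} and \ref{Item:AdaptedAnosovShadowingLemma_04} are obtained directly from the same scalar recursive inequality by summing over $i$, resp.\ taking the supremum over $i$, and then solving for $\sum_i d_i$, resp.\ $\max_i d_i$, using $\sum_i a^{|i-k|}\le\frac{1+a}{1-a}$; these are elementary geometric‑series manipulations that the constant $K_\Gamma$ is again designed to accommodate (note one must do this directly rather than integrate \ref{Item:AdaptedAnosovShadowingLemma_02}, which would only give the larger constant $\tilde K_\Gamma$). For the periodic conclusions one runs the fixed‑point construction over $\mathbb{Z}$ on periodic sequences, whose fixed point is $n$‑periodic by uniqueness, giving \ref{Item:AdaptedAnosovShadowingLemma_05}; and \ref{Item:AdaptedAnosovShadowingLemma_06} follows by summing the periodic analogue of \ref{Item:AdaptedAnosovShadowingLemma_02} over one period $i\in\llbracket 0,n-1\rrbracket$ and regrouping the double sum $\sum_{i=0}^{n-1}\sum_{k\in\mathbb{Z}}$ by periodicity of $(\epsilon_k)$ into $\sum_{k=0}^{n-1}\epsilon_k\sum_{j\in\mathbb{Z}}\exp(-\lambda_\Gamma|j-k|)=\frac{1+\exp(-\lambda_\Gamma)}{1-\exp(-\lambda_\Gamma)}\sum_{k=0}^{n-1}\epsilon_k$, which produces exactly the factor $\tilde K_\Gamma=K_\Gamma(1+\exp(-\lambda_\Gamma))/(1-\exp(-\lambda_\Gamma))$.
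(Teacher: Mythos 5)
Your proposal takes a genuinely different route from the paper's proof. The paper constructs the shadowing orbit \emph{geometrically}: it builds a two-parameter grid of points $Q_i(j,k)$ by iterating the unstable graph transform $(\mathcal{T})^u_i$ of Proposition~\ref{Proposition:ForwardLocalGraphTransform}, and then extracts the weighted estimates item by item using the cone-expansion/contraction estimates of Lemma~\ref{Lemma:EquivarianceUnstableCone} (factors $\sigma^u-3\eta$, $\sigma^s+3\eta$) together with the slope bound $\alpha=6\eta/(\sigma^u-\sigma^s)$ on the invariant graphs. Your proposal instead sets up a Green-kernel / contraction-mapping scheme for the correction $z_i=p_i-q_i$, with the stable component solved forward and the unstable component solved backward, followed by a scalar Gronwall-type bootstrap on $d_i=\|z_i\|_i$. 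Both are classical strategies for shadowing; the graph-transform one is more geometric and yields the constants essentially by tracking projections through cones, while yours is more analytic and packages the estimates into a convolution inequality. The periodic part (uniqueness of the shadowing orbit, hence periodicity, and then summing the weighted bound over one period to pick up the factor $(1+\exp(-\lambda_\Gamma))/(1-\exp(-\lambda_\Gamma))$) is handled the same way in both arguments.

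There are, however, two points in your outline that need repair before the argument is complete. First, you assert that $A_i$ preserves the splitting, $A_i(E^s_i)=E^s_{i+1}$ and $A_i(E^u_i)=E^u_{i+1}$; that is not what Definition~\ref{Definition:AdaptedLocalHyperbolicMap} gives you. There the matrix $A$ has off-diagonal blocks $D^u,D^s$ with $\|D^u\|,\|D^s\|\le\eta$, so the stable and unstable coordinates are coupled already at the linear level. Your Green kernel must be built from the block-diagonal part of $A_i$ only, and the off-diagonal blocks must be absorbed into the perturbation $R_i$; this raises the effective Lipschitz bound of the perturbation from $\eta$ to roughly $3\eta$, which is visibly where the $3\eta$ in the paper's $\exp(-\lambda_\Gamma)=\max\bigl(\tfrac{\sigma^s+3\eta}{1-3\eta},\tfrac{1}{\sigma^u-3\eta}\bigr)$ comes from. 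As written, your perturbation accounts only for the nonlinear part, so the recursion $d_i\le\sum_k a^{|i-k|}\epsilon_k+\eta\sum_k a^{|i-k|}d_{k-1}$ is missing a term.

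Second, and more delicately, the bootstrap as sketched does not obviously close with the \emph{specific} $K_\Gamma=5/(1-\exp(-\lambda_\Gamma))^2$ of the statement. Your convolution lemma produces a factor of order $1/(1-a/b)$, and with $a=\max(\sigma^s,1/\sigma^u)$ and $b=\exp(-\lambda_\Gamma)$ the gap $b-a$ is of order $\eta$ (it must be, since $\lambda_\Gamma$ has to depend on $\eta$); so $1/(1-a/b)\sim b/\eta$, and the contraction factor $\theta\sim\eta/(b(1-a/b))$ does \emph{not} go to zero with $\eta$ --- it stays of order one. One can check that it remains strictly below one under the hypothesis $\eta<\min\bigl((1-\sigma^s)^2/12,(\sigma^u-1)/6\bigr)$, but this is exactly the step that must be done carefully, and the additional index shift ($d_{k-1}$ rather than $d_k$ inside the convolution) contributes a further $1/b$ in the stable direction that you have to offset with the extra factor $1/\sigma^s$ hidden in the kernel $(\sigma^s)^{i-k}$. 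None of this is fatal --- it is the same bookkeeping the paper carries out geometrically through the $\alpha$- and $\sigma_\Gamma$-estimates --- but your ``factor $5$ and the square absorb the slack'' remark papers over precisely the place where the proof has content. I would ask you to write out the bootstrap inequality explicitly with the off-diagonal blocks included and verify that the resulting $\theta$ satisfies $1/(1-\theta)\le 5/(1-\exp(-\lambda_\Gamma))^2$ under the stated hypothesis on $\eta$ before I would accept this as a proof of Theorem~\ref{Theorem:AdaptedAnosovShadowingLemma}.
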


\begin{figure}[hbt]
\centering
\includegraphics[width=0.8\textwidth]{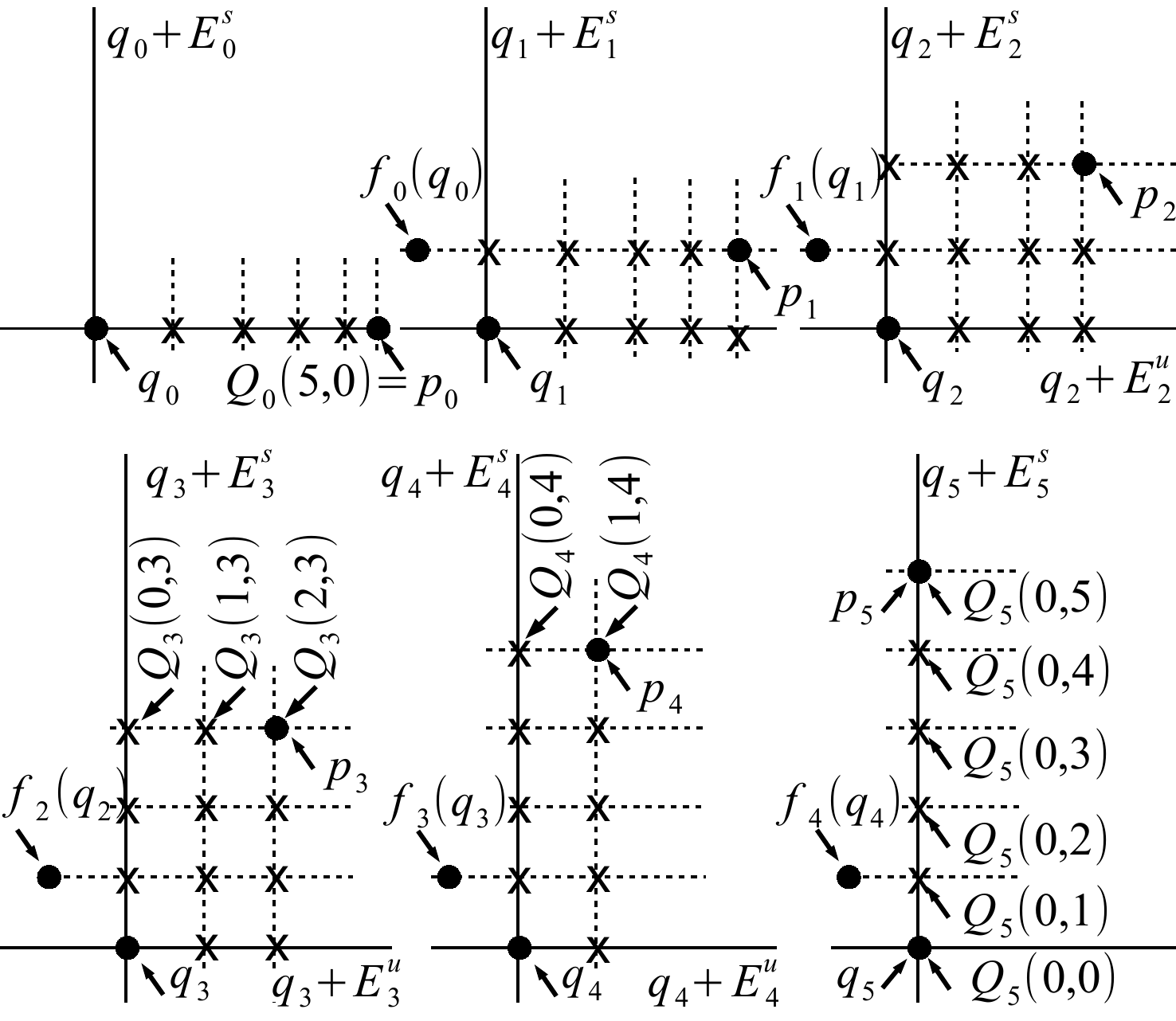}
\caption{A schematic description of the grid $Q_k(i,j)$ for $n=5$.}
\label{Figure:SchematicGrid}
\end{figure}

\begin{proof}
Let $P_i^u,P_i^s$ be the projections onto $E_i^u,E_i^s$ respectively. Let 
\[
\alpha = \frac{6\eta}{\sigma^u-\sigma^s}, \quad \delta_{i} = \| f_{i-1}(q_{i-1}) - q_{i} \|_{i}.
\]
Notice that the proof of items \ref{Item:AdaptedAnosovShadowingLemma_03} and \ref{Item:AdaptedAnosovShadowingLemma_04} follows readily from item \ref{Item:AdaptedAnosovShadowingLemma_02}. We prove only item \ref{Item:AdaptedAnosovShadowingLemma_02}.

\medskip
{\it Step 1.}  We construct by induction a grid of points 
\[
Q_i(j,k) \in B_i(\rho) \ \ \text{for} \ \ i \in \llbracket 0,n \rrbracket, \ \ j \in \llbracket 0,n-i \rrbracket, \ \ \text{and} \ \ k \in \llbracket 0, i \rrbracket
\]
in the following way (see Figure \ref{Figure:SchematicGrid}):
\begin{enumerate}
\item For all $i \in \llbracket 0,n \rrbracket$, let $G_{i,0} : B_i^u(\rho) \to B_i^s(\rho)$ be the horizontal graph passing through the point $q_i$,
\[
\forall\, v\in B_i^u(\rho), \ G_{i,0}(v) = P_i^sq_i.
\]
For all $i \in \llbracket 1,n \rrbracket$ and $k \in \llbracket 1, i \rrbracket$,  let $G_{i,k} : B_i^u(\rho) \to B_i^s(\rho)$ be the graph obtained by  the graph transform (see Proposition \ref{Proposition:ForwardLocalGraphTransform}), iterated $k$ times, of $G_{i-k,0}$,
\[
G_{i,k} = (\mathcal{T})^u_{i-1} \circ \cdots \circ (\mathcal{T})^u_{i-k}(G_{i-k,0}).
\]
Notice that $\| G_{i,k}(0)\|_i \leq \rho/2$ and $\Lip(G_{i,k}) \leq \alpha$.
\item For all  $i \in \llbracket 0, n \rrbracket$ and  $k \in \llbracket 0,i \rrbracket$, let $Q_i(0,k)$ be the point on $\Graph(G_{i,k})$ whose unstable projection is $P^u_i q_i$, or more precisely,
\[
Q_i(0,k) =P^u_i q_i + G_{i,k}(P^u_i q_i).
\]
\item Let $i \in \llbracket 1,n \rrbracket$ and assume that the points $Q_i(j,k)$ have been defined for all $j \in \llbracket 0, n-i \rrbracket$ and $k\in \llbracket 0,i \rrbracket$. Let $ j \in \llbracket 1, n-i+1 \rrbracket$ and $k \in \llbracket 0, i-1 \rrbracket$, then $Q_{i-1}(j,k)$ is the unique point on $\Graph(G_{i-1,k})$ such that
\[
f_{i-1}(Q_{i-1}(j,k)) = Q_i( j-1,k+1).
\]
For $j=0$, the  points $Q_{i-1}(0,k)$ have been defined in item ii.
\end{enumerate}
We will then choose $p_i = Q_i(n-i,i)$.

\medskip
{\it Step 2.} Let  $h_{i,j} := \| P^s_i\big[Q_i(j,0) -Q_i(j,i) \big]\, \|_i$. We show that, for all $i\in\llbracket 1,n \rrbracket$,
\begin{gather*}
h_{i,0} \leq \Big[ (1+\alpha) + \frac{\alpha}{1-\alpha^2} \frac{\sigma^s + 3\eta}{\sigma^u-3\eta} \Big] \delta_i + \frac{\sigma^s+3\eta}{1-\alpha^2} \, h_{i-1,0}. 
\end{gather*}
Proposition \ref{Proposition:ForwardLocalGraphTransform} with  slope $\alpha = 6\eta /(\sigma^u-\sigma^s)$ for the unstable graphs show that
\begin{align*}
\| P_i^s \big[  Q_i(0,0) - Q_i(0,1) \big]\, \|_i &\leq \| P_i^s \big[  q_i - f_{i-1}(q_{i-1}) \big]\, \|_i + \|  P_i^s \big[ f_{i-1}(q_{i-1}) - Q_i(0,1) \big]\, \|_i \\
&\leq \delta_i + \alpha \|  P_i^u \big[ f_{i-1}(q_{i-1}) - Q_i(0,1) \big]\, \|_i \\
&\leq \delta_i + \alpha \|  P_i^u \big[ f_{i-1}(q_{i-1}) - q_i \big]\, \|_i \leq (1+\alpha)\delta_i.
\end{align*}
By forward induction, using Lemma \ref{Lemma:EquivarianceUnstableCone},
\begin{gather*}
Q_{i-1}(j,k) - Q_{i-1}(j',k) \in  C^u_{i-1}(\alpha)  \ \Rightarrow \ Q_{i}(j-1,k+1) - Q_{i}(j'-1,k+1) \in C^u_i (\alpha), \\
\| Q_{i-1}(j,k) - Q_{i-1}(j',k)  \| \leq \frac{1}{\sigma^u-3\eta} \| Q_{i}(j-1,k+1) - Q_{i}(j'-1,k+1) \|.
\end{gather*}
Then
\begin{align*}
\|Q_{i-1}(0,0) - Q_{i-1}(1,0) \|_{i-1} &\leq \frac{1}{\sigma^u-3\eta} \| P_i^u \big[ f_{i-1}(q_{i-1}) - Q_i(0,1) \big]\, \|_i \\
&\leq \frac{1}{\sigma^u-3\eta} \| P_i^u \big[ f_{i-1}(q_{i-1}) - q_i \big]\, \|_i =  \frac{1}{\sigma^u-3\eta} \delta_i.
\end{align*}
By backward induction, using Lemma \ref{Lemma:EquivarianceUnstableCone},
\begin{gather*}
Q_i(j,k) - Q_i(j,k') \in  C^s_i(\alpha) \ \Rightarrow \ Q_{i-1}(j+1,k-1) - Q_{i-1}(j+1,k'-1) \in C^s_{i-1}(\alpha), \\
\| Q_i(j,k) - Q_i(j,k')  \| \leq  (\sigma^s+3\eta) \| Q_{i-1}(j+1,k-1) - Q_{i-1}(j+1,k'-1)\|.
\end{gather*}
Then,
\begin{align*}
h_{i,0} &= \| Q_i(0,0)-Q_i(0,i) \|_i = \| P_i^s \big[ Q_i(0,0)-Q_i(0,i) \big]\, \|_i \\
&\leq \| P^s_i \big[ Q_i(0,0) -Q_i(0,1) \big]\, \|_i + \| P^s_i \big[ Q_i(0,1) - Q_i(0,i) \big]\, \|_i \\
&\leq (1+\alpha) \delta_i+ (\sigma^s+3\eta)  h_{i-1,1}.
\end{align*}
We estimate $h_{i-1,1}$ in the following way,
\begin{align*}
h_{i-1,1} &\leq \| P^s_{i-1} \big[ Q_{i-1}(1,0) - Q_{i-1}(0,0) \big]\, \|_{i-1} \\
&\quad+  \| P^s_{i-1}\big[Q_{i-1}(0,0) -Q_{i-1}(0,i-1) \big]\, \|_{i-1} \\
&\quad\quad+ \| P^s_{i-1}\big[Q_{i-1}(0,i-1) -Q_{i-1}(1,i-1) \big]\, \|_{i-1} \\
&\leq h_{i-1,0} + \alpha \| P^u_{i-1}\big[Q_{i-1}(0,i-1) -Q_{i-1}(1,i-1) \big]\, \|_{i-1}.
\end{align*}
\begin{align*}
\| P^u_{i-1}\big[Q_{i-1}(0,i-1) &- Q_{i-1}(1,i-1) \big]\, \|_{i-1} \\
&\leq \| P^u_{i-1}\big[Q_{i-1}(0,i-1) -Q_{i-1}(0,0) \big]\, \|_{i-1} \\
&\quad+ \| P^u_{i-1}\big[Q_{i-1}(0,0) -Q_{i-1}(1,0) \big]\, \|_{i-1} \\
&\quad\quad+  \| P^u_{i-1}\big[Q_{i-1}(1,0) -Q_{i-1}(1,i-1) \big]\, \|_{i-1}.
\end{align*}
\begin{gather*}
\| P^u_{i-1}\big[Q_{i-1}(0,0) -Q_{i-1}(1,0) \big]\, \|_{i-1} \leq \frac{1}{\sigma^u-3\eta} \| P^u \big[ f_{i-1}(q_{i-1}) - Q_i(0,1) \big]\, \|_i \\
\| P^u_{i-1}\big[Q_{i-1}(1,0) -Q_{i-1}(1,i-1) \big]\, \|_{i-1} \leq  \alpha \| P^s_{i-1}\big[Q_{i-1}(1,0) -Q_{i-1}(1,i-1) \big]\, \|_{i-1} \\
\| P^u_{i-1}\big[Q_{i-1}(0,i-1) - Q_{i-1}(1,i-1) \big]\, \|_{i-1}  \leq \frac{1}{\sigma^u-3\eta} \delta_i + \alpha h_{i-1,1}.
\end{gather*}
Then
\begin{gather*}
h_{i-1,1} \leq \frac{1}{1-\alpha^2} h_{i-1,0} + \frac{\alpha}{(1-\alpha^2)(\sigma^u-3\eta)} \delta_{i},
\end{gather*}
and finally
\begin{gather*}
h_{i,0} \leq \Big[ (1+\alpha) + \frac{\alpha}{1-\alpha^2} \frac{\sigma^s + 3\eta}{\sigma^u-3\eta} \Big] \delta_i + \frac{\sigma^s+3\eta}{1-\alpha^2} \, h_{i-1,0}. 
\end{gather*}

{\it Step 3.} We show that, for every $i\in \llbracket 0,n-1 \rrbracket$,
\begin{gather*}
\|P_i^u \big[  Q_i(0,i) - Q_i(1,i) \big]\, \|_i \leq \frac{\delta_{i+1}}{(1-\alpha^2)(\sigma^u-3\eta)} + \frac{\alpha}{1-\alpha^2}h_{i,0}.
\end{gather*}
Indeed, using
\begin{gather*}
\| P_i^u \big[ Q_i(1,0) - Q_i(1,i) \big]\, \|_i \leq \alpha \| P_i^s \big[ Q_i(1,0) - Q_i(1,i) \big]\, \|_i , \\
\| P_i^s \big[ Q_i(0,i) - Q_i(1,i) \big]\, \|_i \leq \alpha  \| P_i^u \big[ Q_i(0,i) - Q_i(1,i) \big]\, \|_i ,
\end{gather*}
we obtain
\begin{align*}
\| P_i^u \big[ Q_i(0,i) &- Q_i(1,i) \big]\, \|_i \\
&\leq \| P_i^u \big[ Q_i(0,0) - Q_i(1,0) \big]\, \|_i + \alpha \| P_i^s \big[ Q_i(1,0) - Q_i(1,i) \big]\, \|_i \\
&\leq \frac{1}{\sigma^u-3\eta} \| P^u \big[ f_i(q_i) - q_{i+1} \big]\, \|_{i+1} \\
&\quad\quad + \alpha\Big( \| P_i^s \big[ Q_i(0,0) - Q_i(0,i) \big]\, \|_i + \alpha \| P_i^u \big[ Q_i(0,i) - Q_i(1,i) \big]\, \|_i \Big), \\
\| P_i^u \big[ Q_i(0,i) &- Q_i(1,i) \big]\, \|_i \leq \frac{\delta_{i+1}}{(1-\alpha^2)(\sigma^u-3\eta)} + \frac{\alpha}{1-\alpha^2}h_{i,0}.
\end{align*}

\medskip
{\it Step 4.} We simplify the previous inequalities
\begin{gather*}
\frac{\sigma^s+3\eta}{\sigma^u-3\eta} \leq 1, \ \alpha \leq \frac{1}{2}, \  (1+\alpha) + \frac{\alpha}{1-\alpha^2} \frac{\sigma^s + 3\eta}{\sigma^u-3\eta}  \leq \frac{13}{6}.
\end{gather*}
Then for every $i \in \llbracket 0,n-1 \rrbracket$,
\begin{align*}
\| P_i^u \big[ Q_i(0,i) &- Q_i(n-i,i) \big]\, \|_i \leq  \sum_{k=0}^{n-i-1} \| P_i^u \big[ Q_{i}(k,i) - Q_i(k+1,i) \big]\, \|_i \\
&\leq \sum_{k=0}^{n-i-1} \Big( \frac{1}{\sigma^u-3\eta}\Big)^k \| P_{i+k}^u \big[ Q_{i+k}(0,i+k) - Q_{i+k}(1,i+k) \big]\, \|_{i+k} \\
&\leq \sum_{k=0}^{n-i-1} \Big( \frac{1}{\sigma^u-3\eta}\Big)^k \Big( \frac{\delta_{i+k+1}}{(1-\alpha^2)(\sigma^u-3\eta)} + \frac{\alpha}{1-\alpha^2}h_{i+k,0} \Big).
\end{align*}
By using  $\| P_i^s \big[ Q_i(0,i) - Q_i(n-i,i) \big]\, \|_i \leq \alpha \| P_i^u \big[ Q_i(0,i) - Q_i(n-i,i) \big]\, \|_i$, we obtain for every $i \in \llbracket 0, n \rrbracket$,
\begin{align*}
\| Q_i(0,i) - Q_i(n-i,i)  \|_i &\leq \frac{1}{1-\alpha}\sum_{k=i+1}^{n} \Big( \frac{1}{\sigma^u-3\eta}\Big)^{k-i} \delta_{k} \\
&\quad\quad +  \frac{\alpha}{1-\alpha}\sum_{k=i}^{n-1} \Big( \frac{1}{\sigma^u-3\eta}\Big)^{k-i}  h_{k,0}, \\
h_{i,0} = \| Q_i(0,0) - Q_i(0,i) \|_i &\leq \frac{13}{6} \sum_{k=1}^{i} \Big( \frac{\sigma^s+3\eta}{1-\alpha^2} \Big)^{i-k} \delta_k.
\end{align*} 
Let  
\[
\sigma_\Gamma := \max \Big( \frac{\sigma^s+3\eta}{1-\alpha^2}, \frac{1}{\sigma^u-3\eta} \Big) \leq  \exp(-\lambda_\Gamma).
\]
Combining these two last estimates, we obtain
\begin{gather*}
\| Q_i(0,0) - Q_i(n-i,0) \|_i \leq \frac{13}{6} \sum_{k=1}^{n} \sigma_\Gamma^{|k-i|} \delta_k + \sum_{k=i}^{n-1} \sigma_\Gamma^{k-i} h_{k,0}, \\
\sum_{k=i}^{n} \sigma_\Gamma^{k-i} h_{k,0} \leq \frac{13}{6} \sum_{k=i}^n \sigma_\Gamma^{k-i} \sum_{l=1}^k \sigma_\Gamma^{k-l} \delta_l = \frac{13}{6} \sum_{l=1}^n \sigma_\Gamma^{|l-i|} \Big( \sum_{k \geq \max(i,l)} \frac{\sigma_\Gamma^{k-i} \sigma_\Gamma^{k-l}}{\sigma_\Gamma^{|l-i|}} \Big) \delta_l.
\end{gather*}
In both cases $k \geq i \geq l$ or $k \geq l \geq i$,
\[
\frac{\sigma_\Gamma^{k-i} \sigma_\Gamma^{k-l}}{\sigma_\Gamma^{|l-i|}} = \sigma_\Gamma^{2(k-i)} \quad \text{or} \quad 
\frac{\sigma_\Gamma^{k-i} \sigma_\Gamma^{k-l}}{\sigma_\Gamma^{|l-i|}} = \sigma_\Gamma^{2(k-l)}.
\]
We finally obtain for every  $i \in \llbracket 0,n \rrbracket$,
\[
\| p_i-q_i \|_i \leq \frac{13}{3} \frac{1}{1-\sigma_\Gamma^2} \sum_{k=1}^n \sigma_\Gamma^{|k-i|}\delta_k.
\]
We conclude by noticing 
\[
\sum_{i=0}^n \sum_{k=1}^n \sigma_\Gamma^{|k-i|} \leq \frac{1+\sigma_\Gamma}{1-\sigma_\Gamma}.
\]
Consider now a periodic sequence $(q_j)_{j\in\mathbb{Z}}$. For every integer $s\geq1$, consider the restriction of that sequence over $\llbracket -sn,sn\rrbracket$ and apply the first part with a shift in the indices $i = j+sn$. There exists a sequence $(p_j^{s})_{j=-sn}^{sn}$ such that, for every $j\in \llbracket-sn,sn -1 \rrbracket$, $f_j(p_j^s) = p_{j+1}^s$, and
\begin{align*}
\|p_j^{s}-q_j \|_j &\leq K_\Gamma \sum_{k=-sn+1}^{sn} \| f_{k-1}(q_{k-1})-q_k\|_k \exp(-\lambda_\Gamma|k-j|) \\
&\leq K_\Gamma \sum_{l=1}^{n} \|f_{l-1}(q_{l-1})-q_l\|_l \sum_{h=-s}^{s-1} \exp(-\lambda_\Gamma|l+hn-j|).
\end{align*}
Adding the previous inequality over $j \in \llbracket 0, n-1\llbracket$, we obtain
\begin{align*}
\sum_{j=0}^{n-1} \|p_j^{s}-q_j \|_j  &\leq K_\Gamma \sum_{l=1}^{n} \|f_{l-1}(q_{l-1})-q_l\|_l \sum_{j=1}^{n} \sum_{h=-s-1}^{s-1} \exp(-\lambda_\Gamma|j+hn-l|) \\
&\leq  K_\Gamma \sum_{l=1}^{n} \|f_{l-1}(q_{l-1})-q_l\|_l  \sum_{k=-(s-1)n}^{(s+1)n-1} \exp(-\lambda_\Gamma|l-k|).
\end{align*}
By compactness of the balls $B_j(\frac{\rho}{2})$ one can extract a subsequence over the index $s$ of $(p_j^s)_{j=-sn}^{sn}$ converging for every $j\in\mathbb{Z}$ to a sequence $(p_j)_{j\in\mathbb{Z}}$. Using the estimate 
\[
\sum_{k=-\infty}^{+\infty} \exp(-\lambda_{\Gamma} |k|) = \frac{1+\exp(-\lambda_{\Gamma)}}{1-\exp(-\lambda_{\Gamma})},
\] 
we have for every $j\in\mathbb{Z}$, $f_j(p_j)=p_{j+1}$,
\begin{gather*}
\|p_j-q_j \|_j \leq K_\Gamma \frac{1+\exp(-\lambda_{\Gamma)}}{1-\exp(-\lambda_{\Gamma})} \, \sum_{l=1}^{n} \|f_{l-1}(q_{l-1})-q_l\|_l. 
\end{gather*}
Moreover
\begin{gather*}
\sum_{j=0}^{n-1} \|p_j-q_j \|_j  \leq  K_\Gamma \frac{1+\exp(-\lambda_{\Gamma)}}{1-\exp(-\lambda_{\Gamma})} \, \sum_{l=1}^{n} \|f_{l-1}(q_{l-1})-q_l\|_l, 
\end{gather*}
Let be $\tilde p_j := p_{j+n}$. As $\|\tilde p_j-p_j\|_j$ is uniformly bounded in $j$ and $f_j(\tilde p_j) = \tilde p_{j+1}$, $f_j(p_j) = p_{j+1}$, for every $j$,  the cone property given in Lemma \ref{Lemma:EquivarianceUnstableCone} implies $\tilde p_j = p_j$ for every $j\in\mathbb{Z}$ and therefore $(p_j)_{j\in\mathbb{Z}}$ is a periodic sequence, $p_{j+n}=p_j$ for every  $j\in\mathbb{Z}$.
\end{proof}

The proof of Theorem \ref{Theorem:AnosovShadowingImproved} is done by rewriting a pseudo orbit under the dynamics of $f$ as a pseudo orbit in adapted local charts.

\begin{proof}[\bf Proof of Theorem \ref{Theorem:AnosovShadowingImproved}]
Let $\Gamma_\Lambda=(\Gamma, E, F, A, N)$ be a family of adapted local charts and $(\sigma^u,\sigma^s,\eta,\rho)$ be a  set of hyperbolic constants  as defined in \ref{Definition:AdaptedLocalCharts}. We assume that $\eta$ is chosen as in Theorem \ref{Theorem:AdaptedAnosovShadowingLemma}. We define $\Omega = \cup_{x\in\Lambda} \gamma_x(B_x(\rho))$, we denote by $\Lip(f)$ the Lipschitz constant of $f$ over $\Omega$, by $\Lip(\Gamma_\Lambda)$ the supremum of $\Lip_x(\gamma_x)$ and $\Lip_x(\gamma_x^{-1})$ over $x \in \Lambda$ with respect to the adapted norm $\| \cdot \|_x$. Let  
\[
\epsilon_{AS} := \frac{\epsilon(\rho)}{(1+\Lip(\Gamma_\Lambda))^2(1+\Lip(f))}.
\]
Let  $\Omega_{AS} =\cup_{x'\in\Lambda} \gamma_{x'}(B_{x'}(\epsilon_{AS}))$ and $(x_i)_{i=0}^{n}$ be an $\epsilon_{AS}$-pseudo orbit in $\Omega_{AS}$. Let $(x'_i)_{i=0}^n$ be a sequence of points in $\Lambda$ such that  $x_i \in \gamma_{x'_i}(B_{x'_i}(\epsilon_{AS}))$. Then
\begin{gather*}
d(f(x_i'),f(x_i)) \leq \Lip(f) d(x'_i,x_i) \leq \Lip(f) \Lip(\Gamma_\Lambda) \epsilon_{AS}, \\
d(f(x_i),x_{i+1}) \leq \epsilon_{AS}, \\
d(x_{i+1},x'_{i+1}) \leq \Lip(\Gamma_\Lambda) \epsilon_{AS}, 
\intertext{which implies} 
d(f(x'_i),x'_{i+1}) \leq \left[\Lip(\Gamma_\Lambda)(1+\Lip(f)) + 1 \right]\epsilon_{AS} \leq \epsilon(\rho)/(1+\Lip(\Gamma)), \\
d(f(x_i),x'_{i+1}) \leq (1+\Lip(\Gamma_\Lambda)) \epsilon_{AS} \leq \epsilon(\rho)/(1+\Lip(\Gamma_\Lambda)), \\
f(x_i),f(x'_i) \in  \gamma_{x'_{i+1}}(B_{x'_{i+1}}(\epsilon(\rho))).
\end{gather*}
We have proved that, $\forall\, i \in \llbracket0,n-1\rrbracket, \ x'_i \overset{\Gamma_\Lambda}{\to}x'_{i+1}$ is an admissible transition. Let  $q_i \in B_{x'_i}(\epsilon_{AS})$ such that $\gamma_{x'_i}(q_i) = x_i$. Then $q_i \in B_{x'_i}(\frac{\rho}{2})$ and $f_{x'_i,x'_{i+1}}(q_i) \in B_{x'_{i+1}}(\frac{\rho}{2})$.

Let   $E_i^{u,s} = E_{x'_i}^{u,s}$, $\| \cdot \|_i = \| \cdot \|_{x'_i}$, $f_i := f_{x'_i,x'_{i+1}}=\gamma_{x'_{i+1}}^{-1} \circ f \circ \gamma_{x'_i}$, $A_i = A_{x'_i,x'_{i+1}}$, then $( f_i,A_i, E_i^{u/s}, \| \cdot \|_i)$ satisfies the hypothesis of Theorem \ref{Theorem:AdaptedAnosovShadowingLemma}. There exists a sequence $(p_i)_{i=0}^n$ of points $p_i \in B_{x'_i}(\rho)$ such that for every $i \in \llbracket 0, n-1 \rrbracket$, $f_{x'_i,x'_{i+1}}(p_i) =p_{i+1}$, and for every $i \in \llbracket 0, n \rrbracket$,
\begin{gather*}
\|q_i - p_i \|_{x'_i} \leq K_\Gamma \sum_{k=1}^{n} \|f_{x'_{k-1},x'_{k}}(q_{k-1}) - q_{k} \|_{x'_k} \exp(-\lambda_\Gamma|k-i|), \\
\sum_{i=0}^n \|q_i-p_i\|_{x'_i} \leq  K_\Gamma \sum_{k=1}^{n}  \|f_{x'_{k-1},x'_{k}}(q_{k-1}) - q_{k} \|_{x'_k}, \\
\max_{0 \leq i \leq n}  \|q_i-p_i\|_{x'_i} \leq K_\Gamma \max_{1 \leq k \leq n}  \|f_{x'_{k-1},x'_{k}}(q_{k-1}) - q_{k} \|_{x'_k}.
\end{gather*}
We conclude the proof by taking $y= \gamma_{x'_0}(p_0)$, 
\[
K_{AS} = \Lip(\Gamma_\Lambda)^2K_{\Gamma} \ \ \text{and} \ \ \lambda_{AS} = \lambda_\Gamma. \qedhere
\]
\end{proof}

Using the second part of Theorem \ref{Theorem:AdaptedAnosovShadowingLemma}, we improve the Anosov shadowing property for periodic pseudo orbits (instead of pseudo orbits). 

\begin{proposition}[Anosov periodic shadowing lemma] \label{Proposition:AnosovPeriodicShadowingLemma}
Let $(M,f)$ be a $C^1$ dynamical system  and $\Lambda \subseteq M$ be a locally maximal hyperbolic set. Then there exists a constant  $K_{APS}\geq1$ such that for every $n\geq1$, for every periodic $\epsilon_{AS}$-pseudo orbit $(x_i)_{0 \leq i \leq n}$  of the neighborhood $\Omega_{AS} := \{ x \in M : d(x,\Lambda) < \epsilon_{AS} \}$,  there exists a periodic point $p \in \Lambda$ of period $n$ such that
\begin{gather}
\sum_{i=1}^{n}d(x_i,f^i(p)) \leq K_{APS} \sum_{k=1}^n d(f(x_{k-1}),x_k), \\
\max_{0 \leq i \leq n-1} d(x_i,f^i(p)) \leq K_{APS} \max_{0 \leq k \leq n-1} d(f(x_{k}),x_{k+1}),
\end{gather}
where  $K_{APS} =  K_{AS} \frac{1+\exp(-\lambda_{AS})}{1-\exp(-\lambda_{AS})}$, and $\epsilon_{AS}$, $K_{AS}$,  $\lambda_{AS}$, are the constants given in Theorem  \ref{Theorem:AnosovShadowingImproved}.
\end{proposition}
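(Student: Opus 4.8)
The plan is to deduce the periodic shadowing statement from the periodic part of Theorem \ref{Theorem:AdaptedAnosovShadowingLemma} (items \ref{Item:AdaptedAnosovShadowingLemma_05}--\ref{Item:AdaptedAnosovShadowingLemma_06}), using exactly the same passage from the dynamics of $f$ to the adapted local charts that was carried out in the proof of Theorem \ref{Theorem:AnosovShadowingImproved}. Concretely, I would first fix the family of adapted local charts $\Gamma_\Lambda = (\Gamma, E, F, A, N)$ and the constants $(\sigma^u,\sigma^s,\eta,\rho)$, with $\eta$ small enough for Theorem \ref{Theorem:AdaptedAnosovShadowingLemma} to apply, and keep $\epsilon_{AS}$, $K_{AS} = \Lip(\Gamma_\Lambda)^2 K_\Gamma$, $\lambda_{AS} = \lambda_\Gamma$ as in Theorem \ref{Theorem:AnosovShadowingImproved}. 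Given a periodic $\epsilon_{AS}$-pseudo orbit $(x_i)_{0\le i\le n}$ with $x_n = x_0$, I extend it to a genuinely $n$-periodic bi-infinite sequence $(x_i)_{i\in\mathbb Z}$, choose base points $x'_i \in \Lambda$ with $x_i \in \gamma_{x'_i}(B_{x'_i}(\epsilon_{AS}))$ — and here one must choose them $n$-periodically, $x'_{i+n} = x'_i$, which is possible since the whole configuration is periodic — and set $q_i = \gamma_{x'_i}^{-1}(x_i) \in B_{x'_i}(\rho/2)$, $f_i = \gamma_{x'_{i+1}}^{-1}\circ f\circ\gamma_{x'_i}$, $\|\cdot\|_i = \|\cdot\|_{x'_i}$, etc. The same chain of Lipschitz estimates as in the proof of Theorem \ref{Theorem:AnosovShadowingImproved} shows $x'_i \overset{\Gamma_\Lambda}{\to} x'_{i+1}$ is an admissible transition and $f_i(q_i)\in B_{x'_{i+1}}(\rho/2)$, so the periodic hypotheses of Theorem \ref{Theorem:AdaptedAnosovShadowingLemma} hold.

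Next I apply the periodic conclusion of Theorem \ref{Theorem:AdaptedAnosovShadowingLemma}: there is an $n$-periodic true sequence $(p_i)_{i\in\mathbb Z}$ with $f_i(p_i) = p_{i+1}$ and
\[
\sum_{i=0}^{n-1}\|q_i - p_i\|_i \le \tilde K_\Gamma \sum_{k=1}^{n}\|f_{k-1}(q_{k-1}) - q_k\|_k, \quad \tilde K_\Gamma = K_\Gamma\frac{1+\exp(-\lambda_\Gamma)}{1-\exp(-\lambda_\Gamma)},
\]
together with the pointwise bound from item \ref{Item:AdaptedAnosovShadowingLemma_02} (valid on each finite window $\llbracket -sn, sn\rrbracket$ and preserved in the limit), which after summing the geometric series in $|k-i|$ over one period also yields the $\max$-form estimate. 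Set $p := \gamma_{x'_0}(p_0)$. Since $f_i = \gamma_{x'_{i+1}}^{-1}\circ f \circ \gamma_{x'_i}$, one checks by induction that $\gamma_{x'_i}(p_i) = f^i(p)$, and by $n$-periodicity $f^n(p) = \gamma_{x'_0}(p_n) = \gamma_{x'_0}(p_0) = p$, so $p$ is a periodic point of period (dividing) $n$; because $(p_i)$ stays in $\bigcap_{m\in\mathbb Z} f^m(\bar U) = \Lambda$ by local maximality — the whole orbit lies in $\Omega$, and being $f$-invariant it sits inside $\Lambda$ — we get $p\in\Lambda$. Finally I translate the norm estimates back through the charts: $d(x_i, f^i(p)) = d(\gamma_{x'_i}(q_i), \gamma_{x'_i}(p_i)) \le \Lip(\Gamma_\Lambda)\|q_i - p_i\|_i$, while $\|f_{k-1}(q_{k-1}) - q_k\|_k \le \Lip(\Gamma_\Lambda) d(f(x_{k-1}), x_k)$ by the same computation as before, so that
\[
\sum_{i=1}^{n} d(x_i, f^i(p)) \le \Lip(\Gamma_\Lambda)^2 \tilde K_\Gamma \sum_{k=1}^{n} d(f(x_{k-1}), x_k),
\]
and likewise for the $\max$ version, giving $K_{APS} = \Lip(\Gamma_\Lambda)^2 K_\Gamma \frac{1+\exp(-\lambda_\Gamma)}{1-\exp(-\lambda_\Gamma)} = K_{AS}\frac{1+\exp(-\lambda_{AS})}{1-\exp(-\lambda_{AS})}$, as claimed.

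The step I expect to require the most care is not the estimate itself but verifying that the periodic true sequence $(p_i)$ produced by Theorem \ref{Theorem:AdaptedAnosovShadowingLemma} assembles, via the charts, into a genuine periodic orbit \emph{of $f$ lying in $\Lambda$} — i.e.\ the compatibility $\gamma_{x'_i}(p_i) = f^i(p)$ across the chart changes and, crucially, the use of local maximality ($\bigcap_{m\in\mathbb Z} f^m(\bar U) = \Lambda$) to force $p\in\Lambda$ rather than merely near $\Lambda$; the periodicity of $p$ then makes its full orbit an $f$-invariant compact subset of $\bar U$, hence of $\Lambda$. One must also make sure the base points $x'_i$ can be chosen $n$-periodically and that the pointwise estimate \ref{Item:AdaptedAnosovShadowingLemma_02}, which in the periodic setting only comes out of the finite-window argument, indeed passes to the limit uniformly so that the $\max$-form conclusion survives; this is exactly the limiting argument already spelled out at the end of the proof of Theorem \ref{Theorem:AdaptedAnosovShadowingLemma}, so it can be quoted rather than redone.
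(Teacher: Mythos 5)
Your proof is correct and takes exactly the route the paper intends: the paper's own "proof" merely says it is similar to that of Theorem \ref{Theorem:AnosovShadowingImproved}, and you have carried out precisely that translation, invoking the periodic part (items \ref{Item:AdaptedAnosovShadowingLemma_05}--\ref{Item:AdaptedAnosovShadowingLemma_06}) of Theorem \ref{Theorem:AdaptedAnosovShadowingLemma} in adapted charts with the same constant bookkeeping. You also correctly identified and handled the two points the paper leaves implicit — choosing the base points $x'_i$ $n$-periodically, and using local maximality to place the shadowing periodic orbit inside $\Lambda$ rather than merely near it — which is exactly why the proposition, unlike Theorem \ref{Theorem:AnosovShadowingImproved}, assumes $\Lambda$ locally maximal.
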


\begin{proof}
The proof is similar to the proof of Theorem \ref{Theorem:AnosovShadowingImproved}. We will not repeat it.
\end{proof}

\section{The discrete Lax-Oleinik operator}
\label{section:DiscreteLaxOleinikOperator}

We extend the definition of the Lax-Oleinik operator for bijective or not bijective maps and show how  Bousch's approach  helps us to construct  a  subaction (item \ref{item02:BasicErgodicOptimization} of Definition \ref{Definition:DiscreteErgodicOptimization}). We actually construct  a calibrated subaction as explained below that is a stronger notion.

\begin{definition}[Discrete Lax-Oleinik operator] \label {definition:LaxOleinikOperator}
Let $(M,f)$ be a topological dynamical system, $\Lambda \subseteq M$ be a compact $f$-invariant subset, $\Omega \supset \Lambda$ be an open neighborhood of $\Lambda$ of compact closure, and $\phi \in  C^0(\bar\Omega, \mathbb{R})$. Let  $C\geq0$ be a nonnegative constant, and $\bar \phi_\Lambda$ be the ergodic minimizing value of the restriction $\phi$ to  $\Lambda$, see \eqref{Equation:ErgodicMinimizingValue}.  
\begin{enumerate}
\item The {\it Discrete Lax-Oleinik operator} is the nonlinear operator $T$ acting on the space of  functions $u : \bar\Omega \to \mathbb{R}$ defined by
\begin{gather} \label{equation:WeakKAMSolution}
\forall x' \in \bar\Omega, \ T[u](x') := \inf_{x \in \bar\Omega} \big\{ u(x) + \phi(x) -\bar \phi_\Lambda + C d(f(x),x') \big\}.
\end{gather}
\item A {\it calibrated subaction of the Lax-Oleinik operator} is a continuous  function  $u : \bar\Omega \to \mathbb{R}$ solution of the equation
\begin{equation} \label{Equation:CalibratedSubaction}
T[u] = u.
\end{equation}
\end{enumerate}
\end{definition}

The Lax-Oleinik operator is a fundamental tool for studying the set of minimizing configurations in ergodic optimization (Thermodynamic formalism) or discrete Lagrangian dynamics (Aubry-Mather theory, weak KAM theory), see for instance \cite{GaribaldiThieullen2011,Garibaldi2017,SuThieullen2018,Jenkinson2019}. A calibrated subaction is in some sense an optimal subaction. For expanding endomorphisms or one-sided subshifts of finite type, the theory is well developed, see for instance Definition 3.A in Garibaldi \cite{Garibaldi2017}. Unfortunately the standard definition requires the existence of many inverse branches. Definition  \ref{definition:LaxOleinikOperator} is new and valid for two-sided subshifts of finite type and more generally for hyperbolic systems as in the present paper.

Following Bousch's approach, we define the following criteria.  A similar notion for flows can be introduced, see \cite{SuThieullen2021}.

\begin{definition}[Discrete   positive Liv\v{s}ic  criteria] \label{Definition:DiscretePositiveLivsicCriteria}
Let  $(M,f,\phi,\Lambda,\Omega,C)$ be  as in Definition \ref{definition:LaxOleinikOperator}. We say that $\phi$ satisfies the {\it discrete positive Liv\v{s}ic  criteria on $\Omega$ with distortion constant $C$} if
\begin{gather} \label{equation:BouschCriteria}
\inf_{n\geq1} \ \inf_{(x_0,x_1,\ldots,x_n) \in \bar\Omega^{n+1}} \ \sum_{i=0}^{n-1} \big( \phi(x_i) -\bar\phi_\Lambda + C d(f(x_i),x_{i+1}) \big) > -\infty.
\end{gather}
\end{definition} 
The discrete positive Liv\v{s}ic  criteria is the key ingredient  of the proof of the existence of a  calibrated subaction with a controlled Lipschitz constant. Here $\Lip(\phi)$, $\Lip(u)$, denote the Lipschitz constant of $\phi$ and $u$ restricted on $\bar\Omega$ respectively.

\begin{proposition} \label{proposition:WeakKAMsolution}
Let   $(M,f,\phi,\Lambda,\Omega,C)$  be as in Definition \ref{definition:LaxOleinikOperator}. Assume that $\phi$ satisfies the discrete positive Liv\v{s}ic  criteria. Then  
\begin{enumerate}
\item the Lax-Oleinik operator admits a $C^0$ calibrated subaction,  
\item every $C^0$ calibrated subaction  $u$ is  Lipschitz with $\Lip(u) \leq C$.
\end{enumerate}
\end{proposition}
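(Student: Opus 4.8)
\textbf{Proof plan for Proposition \ref{proposition:WeakKAMsolution}.}

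The plan is to exploit the discrete positive Liv\v sic criteria \eqref{equation:BouschCriteria} to show that the Lax-Oleinik operator $T$ maps a suitable class of functions into itself and contracts enough to produce a fixed point, while simultaneously forcing an a priori Lipschitz bound. First I would record the elementary structural properties of $T$: it is monotone ($u\leq v \Rightarrow T[u]\leq T[v]$) and it commutes with addition of constants ($T[u+c]=T[u]+c$), both immediate from the infimum formula \eqref{equation:WeakKAMSolution}. Next, for the Lipschitz estimate in part (ii): if $u=T[u]$, then for any $x',y'\in\bar\Omega$ and any $x$ nearly realizing the infimum defining $T[u](x')$, one has $u(y')\leq u(x)+\phi(x)-\bar\phi_\Lambda+Cd(f(x),y')\leq u(x)+\phi(x)-\bar\phi_\Lambda+Cd(f(x),x')+Cd(x',y')$, and taking the infimum over $x$ gives $u(y')\leq u(x')+Cd(x',y')$; symmetry yields $\Lip(u)\leq C$. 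This part is short and requires no hypothesis beyond finiteness of $T[u]$, which the criteria guarantees.

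The substance is part (i). I would define, for $N\geq1$, the iterate $T^N[u_0]$ starting from $u_0\equiv 0$ (or from $\phi-\bar\phi_\Lambda$, whichever is cleaner), and observe that $T^N[0](x')$ is exactly the infimum over $n\leq N$ and over pseudo-chains $(x_0,\dots,x_n=x')$ of $\sum_{i=0}^{n-1}\big(\phi(x_i)-\bar\phi_\Lambda+Cd(f(x_i),x_{i+1})\big)$ — essentially \eqref{equation:BouschCriteria} with the right endpoint pinned at $x'$. The positive Liv\v sic criteria says precisely that this family is bounded below uniformly in $N$ and in $x'$; on the other hand the sequence $N\mapsto T^N[0]$ is nonincreasing (since $T[0]\leq 0$, because the chain of length zero, or rather the single-point estimate, is admissible — one must check $T[0](x')\leq \phi(x')-\bar\phi_\Lambda+0 \leq$ a finite constant, and then monotonicity propagates $T^{N+1}[0]\leq T^N[0]$). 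A nonincreasing sequence bounded below converges pointwise to some $u_\infty$. Moreover each $T^N[0]$ is Lipschitz with constant $\leq C$ by the same argument as in part (ii) applied to the ``pinned endpoint'' variable (the dependence of $T[u](x')$ on $x'$ is always $C$-Lipschitz, for any $u$), so the limit $u_\infty$ is $C$-Lipschitz, hence the convergence is uniform on the compact $\bar\Omega$ by Dini's theorem (or just by equi-Lipschitz continuity plus pointwise monotone convergence). Uniform convergence then passes through the infimum in \eqref{equation:WeakKAMSolution}: $T[u_\infty]=\lim_N T[T^N[0]]=\lim_N T^{N+1}[0]=u_\infty$, so $u_\infty$ is a calibrated subaction.

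The main obstacle I anticipate is the bookkeeping around the endpoint of the pseudo-chain: the criteria \eqref{equation:BouschCriteria} sums $\phi(x_i)$ for $i=0,\dots,n-1$ and includes the cost $Cd(f(x_{n-1}),x_n)$ of the last jump, whereas the iterated operator $T^N[0](x')$ naturally produces exactly this expression with $x_n=x'$ — so the identification is clean, but one should double-check that the lower bound survives after pinning $x_n=x'$ (it does, since pinning only restricts the infimum, raising it). A secondary point requiring a little care is that $\bar\Omega$ is compact and $\phi$ continuous, so all infima are attained or at least finite, and the operator $T$ genuinely maps $C^0(\bar\Omega)$ to $C^0(\bar\Omega)$; this is where compact closure of $\Omega$ in the hypotheses is used. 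Finally, one should note the argument never uses hyperbolicity or even the dynamics of $f$ beyond continuity — hyperbolicity enters only later, in Section \ref{section:ProofDiscreteCase}, to verify that the criteria \eqref{equation:BouschCriteria} actually holds with $C=K_\Lambda\Lip(\phi)$, via the improved shadowing lemma.
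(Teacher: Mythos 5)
Your proof of part (ii) — the $C$-Lipschitz bound for any calibrated subaction — is correct and matches the paper's Part~1. You also correctly note that the same computation shows $T$ maps any continuous $u$ to a $C$-Lipschitz function, and that the positive Liv\v{s}ic criteria gives a uniform lower bound for $T^N[0]$.

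The gap is in part (i). You assert that $N\mapsto T^N[0]$ is nonincreasing ``since $T[0]\leq 0$,'' but the computation you offer only gives $T[0](x')\leq \phi(x')-\bar\phi_\Lambda$, a finite upper bound, not $T[0]\leq 0$. In general $T[0]$ is \emph{not} $\leq 0$: for instance if $x'\notin f(\bar\Omega)$ then $T[0](x')=\inf_{x\in\bar\Omega}\{\phi(x)-\bar\phi_\Lambda+Cd(f(x),x')\}$ carries a strictly positive transport cost and may be positive; and even at points $x'\in f(\bar\Omega)$ one gets $T[0](x')\leq\phi(x)-\bar\phi_\Lambda$ for some preimage $x$, which need not be $\leq 0$. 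Without $T[0]\leq 0$, monotonicity of $T$ gives no monotonicity of the iterates, the pointwise limit $u_\infty$ may not exist, and Dini's theorem is unavailable. (Equi-Lipschitz continuity plus the lower bound only yield a convergent \emph{subsequence} via Arzel\`a--Ascoli, whose limit need not be fixed by $T$.)

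The paper circumvents this with a two-stage construction. It first sets $v:=\inf_{n\geq0}T^n[0]$, a pointwise infimum rather than a monotone limit, which is $C$-Lipschitz and bounded below by the criteria. Since $T$ commutes with countable infima of functions bounded below, $T[v]=\inf_{n\geq0}T^{n+1}[0]\geq\inf_{n\geq0}T^n[0]=v$. It then sets $u:=\sup_{n\geq0}T^n[v]=\lim_{n\to\infty}T^n[v]$; this sequence \emph{is} nondecreasing precisely because $T[v]\geq v$, and it is bounded above by $C\,\diam(\bar\Omega)$ by an argument using a minimizing orbit in $\Lambda$ together with the $C$-Lipschitz property of each $T^n[v]$. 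Finally $T[u]\geq u$ by monotonicity, and $T[u]\leq u$ is obtained by extracting a convergent subsequence of near-minimizers by compactness. You should adopt this inf-then-sup scheme (or else prove $T[0]\leq 0$, which does not hold in general); the single monotone iteration you propose does not get off the ground.
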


Notice that conversely the discrete positive Liv\v{s}ic criteria is satisfied whenever $\phi$ admits a  Lipschitz subaction $u$ with $\Lip(u) \leq C$. When $C=0$ and the infimum in \eqref{equation:BouschCriteria} is taken over true orbits instead of all sequences, there always exists a lower semi-continuous subaction \eqref{Equation:Subaction} as it is discussed in \cite{SuThieullen2019}.

We recall without proof some basic facts of the Lax-Oleinik operator.

\begin{lemma}
Let $T$ be the Lax-Oleinik operator as in Definition~\ref{definition:LaxOleinikOperator}. Then
\begin{enumerate}
\item if $u_1 \leq u_2$ then $T[u_1] \leq T[u_2]$,
\item for every constant $c \in\mathbb{R}$, $T[u+c] = T[u]+c$,
\item for every sequence of functions $(u_n)_{n\geq0}$ bounded from below, 
\[
T[\inf_{n\geq0} u_n] = \inf_{n\geq0} T[u_n].
\]
\end{enumerate}
\end{lemma}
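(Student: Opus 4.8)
All three properties follow directly by unwinding the definition
\[
T[u](x') = \inf_{x \in \bar\Omega} \big\{ u(x) + \phi(x) - \bar\phi_\Lambda + C\, d(f(x),x') \big\}
\]
and using elementary facts about infima. For item (i), suppose $u_1 \leq u_2$ pointwise on $\bar\Omega$. Then for each fixed $x' \in \bar\Omega$ and each $x \in \bar\Omega$ the bracketed expression for $u_1$ is $\leq$ the bracketed expression for $u_2$, since they differ only in the first term; taking the infimum over $x$ preserves this inequality, giving $T[u_1](x') \leq T[u_2](x')$. For item (ii), shifting $u$ by a constant $c$ shifts every bracketed expression by exactly $c$, independently of $x$; since $\inf_x (g(x) + c) = (\inf_x g(x)) + c$, we get $T[u+c] = T[u] + c$.

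For item (iii), write $u := \inf_{n \geq 0} u_n$, which is a well-defined real-valued function on $\bar\Omega$ because the family is bounded from below. Fix $x' \in \bar\Omega$. One inequality is immediate from item (i): $u \leq u_n$ for every $n$, so $T[u] \leq T[u_n]$, hence $T[u](x') \leq \inf_n T[u_n](x')$. For the reverse inequality, observe that for every $x \in \bar\Omega$,
\[
u(x) + \phi(x) - \bar\phi_\Lambda + C\,d(f(x),x') = \inf_{n\geq0}\Big( u_n(x) + \phi(x) - \bar\phi_\Lambda + C\,d(f(x),x') \Big) \geq \inf_{n\geq0} T[u_n](x'),
\]
the last step because $u_n(x) + \phi(x) - \bar\phi_\Lambda + C\,d(f(x),x') \geq T[u_n](x')$ for each $n$. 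Taking the infimum over $x \in \bar\Omega$ on the left yields $T[u](x') \geq \inf_n T[u_n](x')$, which combined with the first inequality gives equality. This uses only the standard commutation of a double infimum, $\inf_x \inf_n = \inf_n \inf_x$, applied with care to keep all quantities bounded below.

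There is essentially no obstacle here; the only point requiring a word of care is that in item (iii) one must know $u = \inf_n u_n$ is finite (not $-\infty$), which is exactly the boundedness-from-below hypothesis, and that the interchange of infima is legitimate, which it always is for infima over arbitrary index sets. One could also simply note that $T[u] = \inf_{x}\{\,\cdot\,\}$ is itself an infimum of affine-in-$u$ (indeed order-preserving) functionals, from which (i)–(iii) are the three defining features of such an operator; but the direct verification above is shortest. I would present it in three short displayed computations, one per item, with no appeal to any earlier result in the paper.
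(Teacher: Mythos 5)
Your proof is correct; all three items are indeed immediate consequences of the definition, and your verification of item (iii) via the interchange of the two infima (together with the one-sided inequality from monotonicity) is the standard argument. Note that the paper explicitly states this lemma \emph{without} proof ("We recall without proof some basic facts of the Lax-Oleinik operator"), so your write-up simply supplies the routine verification the authors chose to omit; there is no alternative approach in the paper to compare against.
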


\begin{proof}[Proof of Proposition \ref{proposition:WeakKAMsolution}]
Define
\[
\forall\, x,y \in \bar\Omega, \ E(x,y) := \phi(x)-\bar\phi_\Lambda +Cd(f(x),y),
\]
and
\[
I := \inf_{n\geq1} \ \inf_{(x_0,x_1,\ldots,x_n) \in \bar\Omega^{n+1}} \ \sum_{i=0}^{n-1} E(x_i,x_{i+1}).
\]

{\it Part 1.} We show that $T[u]$ is $C$-Lipschitz whenever $u$ is continuous. Indeed if $x',y' \in \bar\Omega$ are given,
\begin{alignat*}{2}
T[u](x') &= u(x) + E(x,x'), &\quad &\text{for some $x\in \bar\Omega$,} \\
T[u](y') &\leq u(y) + E(y,y'), &\quad &\text{for every $y \in \bar\Omega$.}
\end{alignat*}
Then by choosing $y=x$ in the previous inequality, we obtain
\begin{gather*}
T[u](y') -T[u](x') \leq E(x,y')-E(x,y) = C \big[ d(f(x),y')-d(f(x),y) \big] \leq Cd(y',y).
\end{gather*}

{\it Part 2.} Let $v := \inf_{n\geq 0} T^n[0]$. We show that $v$ is $C$-Lipschitz, non positive,  and satisfies $T[v] \geq  v$. Indeed we first have
\[
\forall n\geq1, \ \forall x' \in \bar\Omega, \ T^n[0](x') = \inf_{x_0,\ldots,x_n=x'} \ \sum_{i=0}^{n-1} E(x_i,x_{i+1}) \geq I.
\]
Moreover $v$ is  $C$-Lipschitz since $T^n[0]$ is $C$-Lipschitz thanks to part 1. Finally we have
\[
T[v] = T[\inf_{n\geq 0}  T^n[0] ] =  \inf_{n\geq0}  T^{n+1}[v]  \geq  v.
\]

{\it Part 3.} Let $u := \sup_{n\geq0} T^n[v]  = \lim_{n\to+\infty} T^n[v]$. We show that $u$ is a $C$-Lipschitz calibrated subaction. We already know from parts 1 and 2 that $T^n[v]$ is $C$-Lipschitz for every $n\geq0$. Using the definition of $\bar \phi_\Lambda$, we know that, for every $n\geq1$ there exists $x \in \Lambda$ such that $\sum_{n=0}^{n-1} \big( \phi \circ f^i (x) -\bar \phi_\Lambda \big) \leq 0$, and using the fact that $T^n[v]$ is $C$-Lipschitz, we have 
\begin{gather*}
T^n[v](f^n(x)) \leq  v(x) + \sum_{i=0}^{n-1} E(f^i(x),f^{i+1}(x)) = v(x) +\sum_{k=0}^{n-1}(\phi \circ f^k(x) -\bar\phi_\Lambda) \leq 0, \\
T^n[v](x') \leq C d(x',f^n(x)) \leq  C \text{\rm diam}(\bar\Omega), \quad \forall x' \in \bar\Omega.
\end{gather*}
Since $T[v] \geq v$, we also have $T[u] \geq u$. We next show $T[u] \leq u$. Let $x' \in \bar\Omega$ be given. For every $n\geq1$,  $T[T^n[v]] = T^{n+1}[v] \leq u$, there exists $x_n \in \bar\Omega$ such that 
\[
T^n[v](x_n) + E(x_n,x') \leq u(x').
\]
By compactness of $\bar\Omega$, $(x_n)_{n\geq1}$ admits a converging subsequence (denoted the same way) to some $x_\infty\in \bar\Omega$. Thanks to the uniform Lipschitz constant of the sequence $(T^n[v])_{n\geq1}$ and the fact that $\lim_{n\to+\infty}T^n[v]=u$, we obtain,
\[
\forall\, x' \in\bar\Omega, \ T[u](x') = \inf_{x \in \bar\Omega} \{ u(x)+E(x,x') \} \leq u(x_\infty) + E(x_\infty,x') \leq u(x').
\]
We have proved $T[u]=u$ and $u$ is $C$-Lipschitz.
\end{proof}

\section{The discrete positive Liv\v{s}ic criteria}
\label{section:ProofDiscreteCase}

Let $(M,f)$ be a $C^1$ dynamical system, $\Lambda \subseteq M$ be a locally maximal hyperbolic compact subset, and $\phi :M \to \mathbb{R}$ be a Lipschitz continuous function. A calibrated subaction $u$ \eqref{Equation:CalibratedSubaction} is in particular a subaction \eqref{Equation:Subaction}
\[
\forall x \in \bar\Omega,\quad u\circ f(x) -u(x) \leq \phi(x) - \bar \phi_\Lambda. 
\]
Theorem  \ref{Theorem:DiscreteSubactionExistence} is therefore a consequence of Proposition \ref{proposition:WeakKAMsolution} provided we prove that $f$ satisfies the discrete positive Liv\v{s}ic criteria \eqref{equation:BouschCriteria}. 

\begin{proposition} \label{Proposition:ValidityDiscreteCriteria}
Let  $(M,f,\phi,\Lambda,\Omega,C)$ be as in Definition \ref{definition:LaxOleinikOperator}. Then $\phi$ satisfies the discrete positive Liv\v{s}ic criteria.
\end{proposition}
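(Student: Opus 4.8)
The plan is to verify the discrete positive Liv\v{s}ic criteria \eqref{equation:BouschCriteria} by comparing an arbitrary finite sequence $(x_0,\ldots,x_n)$ in $\bar\Omega$ with a \emph{true} orbit on $\Lambda$ obtained from the improved shadowing lemma, and then controlling the error terms using the Lipschitz regularity of $\phi$ together with the summable shadowing estimate \eqref{item:AnosovShadowingLemma_2}. Throughout I would work with the constant $C = K_\Lambda\,\Lip(\phi)$ (or more precisely with $C$ at least as large as the various hyperbolicity-dependent multiples of $\Lip(\phi)$ that appear), so that the distortion term $Cd(f(x_i),x_{i+1})$ dominates the accumulated Lipschitz error incurred when replacing $x_i$ by a shadowing orbit point.

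First I would reduce to the case where every consecutive pseudo-error $d(f(x_i),x_{i+1})$ is at most $\epsilon_{AS}$ and every $x_i$ lies in the smaller neighborhood $\Omega_{AS}$: if some $d(f(x_i),x_{i+1})$ exceeds $\epsilon_{AS}$ or some $x_i$ falls outside $\Omega_{AS}$, then by Lipschitz continuity each summand $\phi(x_i)-\bar\phi_\Lambda + Cd(f(x_i),x_{i+1})$ is bounded below by a fixed constant (since $\phi$ is bounded on $\bar\Omega$), and one can split the sequence at those ``bad'' indices, so it suffices to bound blocks of genuine $\epsilon_{AS}$-pseudo orbits lying in $\Omega_{AS}$. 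For such a block, Theorem \ref{Theorem:AnosovShadowingImproved} gives a point $y$ with $f^i(y)$ shadowing $(x_i)$ and, crucially, $\sum_{i=0}^n d(x_i,f^i(y)) \leq K_{AS}\sum_{k=1}^n d(f(x_{k-1}),x_k)$. One then writes
\begin{align*}
\sum_{i=0}^{n-1}\big(\phi(x_i)-\bar\phi_\Lambda + Cd(f(x_i),x_{i+1})\big) &\geq \sum_{i=0}^{n-1}\big(\phi(f^i(y))-\bar\phi_\Lambda\big) - \Lip(\phi)\sum_{i=0}^{n-1}d(x_i,f^i(y)) \\
&\quad + C\sum_{i=0}^{n-1}d(f(x_i),x_{i+1}).
\end{align*}
The Birkhoff-sum term $\sum_{i=0}^{n-1}(\phi(f^i(y))-\bar\phi_\Lambda)$ is bounded below by a constant independent of $n$: since $\bar\phi_\Lambda$ is the ergodic minimizing value over $\Lambda$ and $y$ shadows an orbit staying near $\Lambda$ (indeed, by local maximality $y$ can be taken in $\Lambda$), the partial Birkhoff sums of $\phi-\bar\phi_\Lambda$ along orbits in $\Lambda$ are uniformly bounded below — this is the standard fact that $\bar\phi_\Lambda = \lim_n \frac1n\inf\sum$ forces $\inf_n \inf_x \sum_{k=0}^{n-1}(\phi f^k(x)-\bar\phi_\Lambda) > -\infty$ on a compact invariant set (otherwise concatenating would push the average below $\bar\phi_\Lambda$). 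Then the remaining terms combine: $-\Lip(\phi)\sum d(x_i,f^i(y)) + C\sum d(f(x_i),x_{i+1}) \geq (C - K_{AS}\Lip(\phi))\sum d(f(x_i),x_{i+1}) \geq 0$ once $C \geq K_{AS}\Lip(\phi)$, which holds for our choice of $C$.

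The main obstacle I anticipate is the careful bookkeeping when the sequence is not a single clean pseudo orbit — i.e., making the reduction in the previous paragraph fully rigorous: one must handle the endpoints of each block (the shadowing estimate is stated for $\epsilon_{AS}$-pseudo orbits inside $\Omega_{AS}$, so the index where $x_i$ leaves $\Omega_{AS}$ needs to be excised and absorbed into the uniform lower bound coming from boundedness of $\phi$ on $\bar\Omega$), and one must confirm that the $\bar\phi_\Lambda$-lower-bound constant and the hyperbolicity constant $K_{AS}$ do not interact badly when summed over a variable number of blocks. A clean way to organize this is: let $B := \sup_{\bar\Omega}|\phi| + |\bar\phi_\Lambda|$ and $B_\Lambda := -\inf_{m\geq1}\inf_{z\in\Lambda}\sum_{k=0}^{m-1}(\phi f^k(z)-\bar\phi_\Lambda) < \infty$; decompose $\{0,\ldots,n-1\}$ into maximal runs of indices forming $\epsilon_{AS}$-pseudo orbits in $\Omega_{AS}$ separated by single ``bad'' indices; on each run apply the estimate above to get a lower bound $-B_\Lambda$, on each bad index a lower bound $-B$ (using $Cd \geq 0$ and $\phi - \bar\phi_\Lambda \geq -B$), and since each bad index is compensated — actually, since each bad index carries $Cd(f(x_i),x_{i+1}) \geq C\epsilon_{AS}$ when the error is large, one may even arrange $C\epsilon_{AS} \geq B$ by taking $C$ large, absorbing the bad-index cost entirely; there are at most $n$ runs but the bound $-B_\Lambda$ per run must be shown to not accumulate, which is handled by noting that between two runs there is always a bad index contributing a positive amount $\geq C\epsilon_{AS} - B \geq 0$, so in fact the number of runs is controlled and the total is bounded below by, say, $-B_\Lambda - B$ independently of $n$. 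Once this accounting is laid out, \eqref{equation:BouschCriteria} follows with an explicit lower bound, completing the proof of Proposition \ref{Proposition:ValidityDiscreteCriteria} and hence, via Proposition \ref{proposition:WeakKAMsolution}, of Theorem \ref{Theorem:DiscreteSubactionExistence}.
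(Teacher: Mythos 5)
There is a genuine gap at the step you call a ``standard fact,'' namely the claim that
\[
\inf_{n\geq 1}\ \inf_{z\in\Lambda}\ \sum_{k=0}^{n-1}\big(\phi\circ f^k(z)-\bar\phi_\Lambda\big) > -\infty
\]
follows automatically from the definition of $\bar\phi_\Lambda$ on a compact invariant set. It does not. The sequence $a_n := \inf_{z\in\Lambda}\sum_{k=0}^{n-1}\phi\circ f^k(z)$ is superadditive, and Fekete's lemma gives $a_n/n \to \sup_n a_n/n = \bar\phi_\Lambda$, hence $a_n - n\bar\phi_\Lambda \le 0$ for all $n$; but superadditivity plus $a_n/n\to\bar\phi_\Lambda$ is perfectly compatible with $a_n - n\bar\phi_\Lambda \to -\infty$ (e.g.\ $a_n - n\bar\phi_\Lambda = -\sqrt{n}$ is superadditive). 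Concretely, for a uniquely ergodic system such as an irrational circle rotation, $\bar\phi_\Lambda = \int\phi\,d\mu$ and there are continuous, even smooth, $\phi$ whose Birkhoff sums $\sum(\phi\circ f^k - \int\phi\,d\mu)$ are unbounded below. So the lower bound you want is \emph{not} a soft consequence of the definition: it is essentially the Ma\~n\'e--Conze--Guivarc'h boundedness statement for $\Lambda$, which is what the proposition is (a strengthening of) in the first place. Invoking it here is circular. The parenthetical ``otherwise concatenating would push the average below $\bar\phi_\Lambda$'' is also not valid in a general topological dynamical system: one cannot concatenate two orbit segments of $\Lambda$ into a single orbit segment without a shadowing mechanism, and even with shadowing the concatenation introduces errors that must be tracked.

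The paper avoids this issue by never needing a lower bound for Birkhoff sums along arbitrary orbit segments. Instead it decomposes an $\epsilon_{AS}$-pseudo orbit into at most $N_{AS}$ \emph{approximately periodic} blocks (Lemma \ref{lemma:PseudoPeriodicSegment}, the bound $r\le N_{AS}$ coming from a covering-number argument) and shadows each block by a genuine \emph{periodic} orbit via the periodic shadowing lemma (Proposition \ref{Proposition:AnosovPeriodicShadowingLemma}). For a periodic orbit $p$ of period $m$ the bound $\sum_{k=0}^{m-1}(\phi\circ f^k(p)-\bar\phi_\Lambda)\geq 0$ \emph{is} elementary, because $\bar\phi_\Lambda = \inf_\mu\int\phi\,d\mu$ over invariant measures and the periodic-orbit measure is one of them; this is the only place where a lower bound on a Birkhoff sum is needed, and it is free. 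Your version uses only the non-periodic shadowing estimate \eqref{item:AnosovShadowingLemma_2}, which leaves you holding a non-periodic orbit segment whose Birkhoff sum you have no a priori control over. A secondary, more mechanical issue: in your last paragraph the number of ``runs'' can a priori grow with $n$, and each run costs $-B_\Lambda$; to make the total bounded independently of $n$ you would need each bad index to contribute at least $B_\Lambda$ (not merely $\geq 0$), i.e.\ to take $C$ large enough that $C\epsilon_{AS}\geq B + B_\Lambda$. The paper sidesteps this by the $r\le N_{AS}$ bound from Lemma \ref{lemma:PseudoPeriodicSegment}, so the number of blocks is controlled by geometry of $\Omega_{AS}$ rather than by $n$. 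Both fixes are possible in principle, but the first gap — justifying the lower bound on orbit Birkhoff sums — requires going through periodic shadowing anyway, so you would effectively be reconstructing the paper's route.
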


For a true orbit instead of a pseudo orbit, the criteria  amounts to bounding from below the normalized Birkhoff sum $\frac{1}{n}\sum_{i=0}^{n-1} \big( \phi \circ f^i(x) - \bar \phi \big)$. As we saw in \cite{SuThieullen2019}, this is equivalent to the existence of  a bounded lower semi-continuous subaction. To obtain a better regularity of the subaction we need the stronger criteria \eqref{equation:BouschCriteria}.

We first start by proving two intermediate lemmas, Lemma \ref{lemma:FinitenessPeriodicPseudoBirkhoffSums} for periodic pseudo-orbits, and Lemma \ref{lemma:FinitenessPseudoBirkhoffSums} for pseudo-orbits. Denote
\[
\Omega(\epsilon)  := \{ x \in M : d(x,\Lambda) < \epsilon\}.
\]
We recall that $\epsilon_{AS}$, $\Omega_{AS} =  \Omega(\epsilon_{AS})$, and $K_{APS}$, have been defined in Theorem \ref{Theorem:AnosovShadowingImproved} and Proposition \ref{Proposition:AnosovPeriodicShadowingLemma}.

\begin{lemma} \label{lemma:FinitenessPeriodicPseudoBirkhoffSums}
Let  $C \geq K_{APS} \Lip(\phi)$. Then for every periodic $\epsilon_{AS}$-pseudo orbit $(x_i)_{i=0}^n$  of  $\Omega_{AS}$,
\[
\sum_{i=0}^{n-1} \big( \phi (x_i) -\bar \phi_\Lambda +C d(f(x_i),x_{i+1}) \big) \geq 0.
\]
\end{lemma}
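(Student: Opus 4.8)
The plan is to reduce the estimate for a periodic $\epsilon_{AS}$-pseudo orbit to the trivial observation that the Birkhoff sum of $\phi$ over a genuine periodic orbit inside $\Lambda$ is at least $n\bar\phi_\Lambda$, paying for the difference with the ``distortion'' term $C\sum d(f(x_i),x_{i+1})$. First I would apply the Anosov periodic shadowing lemma, Proposition \ref{Proposition:AnosovPeriodicShadowingLemma}, to the periodic $\epsilon_{AS}$-pseudo orbit $(x_i)_{i=0}^n$: this produces a genuine periodic point $p\in\Lambda$ of period $n$ with
\[
\sum_{i=1}^{n} d(x_i,f^i(p)) \leq K_{APS}\sum_{k=1}^{n} d(f(x_{k-1}),x_k).
\]
Writing $\delta_k := d(f(x_{k-1}),x_k)$ and noting $x_0=x_n$, $f^0(p)=f^n(p)=p$ by periodicity, the bound also controls $\sum_{i=0}^{n-1} d(x_i,f^i(p))$ by $K_{APS}\sum_{k=1}^n \delta_k$.

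\textbf{Key steps.} Next I would compare the two Birkhoff sums term by term using that $\phi$ is Lipschitz on $\bar\Omega$:
\[
\sum_{i=0}^{n-1}\phi(x_i) \geq \sum_{i=0}^{n-1}\phi(f^i(p)) - \Lip(\phi)\sum_{i=0}^{n-1} d(x_i,f^i(p)) \geq \sum_{i=0}^{n-1}\phi(f^i(p)) - \Lip(\phi)\,K_{APS}\sum_{k=1}^n \delta_k.
\]
Since $p\in\Lambda$ is periodic of period $n$, its orbit lies in $\Lambda$ and the definition of $\bar\phi_\Lambda$ (or equivalently the fact that $\bar\phi_\Lambda$ is the infimum of $\int\phi\,d\mu$ over invariant probabilities, applied to the uniform measure on the orbit of $p$) gives $\sum_{i=0}^{n-1}\phi(f^i(p)) \geq n\bar\phi_\Lambda$. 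Combining,
\[
\sum_{i=0}^{n-1}\big(\phi(x_i)-\bar\phi_\Lambda\big) \geq -\Lip(\phi)\,K_{APS}\sum_{k=1}^n \delta_k.
\]
Finally, since $C \geq K_{APS}\Lip(\phi)$, the distortion term dominates: $C\sum_{i=0}^{n-1} d(f(x_i),x_{i+1}) = C\sum_{k=1}^n \delta_k \geq K_{APS}\Lip(\phi)\sum_{k=1}^n \delta_k$, and adding this to the previous inequality yields
\[
\sum_{i=0}^{n-1}\big(\phi(x_i)-\bar\phi_\Lambda + C\,d(f(x_i),x_{i+1})\big) \geq 0,
\]
as desired.

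\textbf{Main obstacle.} The only delicate point is bookkeeping with the indices and the periodic wrap-around: one must make sure the shadowing estimate from Proposition \ref{Proposition:AnosovPeriodicShadowingLemma} is applied to control exactly $\sum_{i=0}^{n-1} d(x_i,f^i(p))$ (using $x_0=x_n$ and $p=f^n(p)$), and that the number of distortion terms $\delta_k$ matches. There is also a minor subtlety in justifying $\sum_{i=0}^{n-1}\phi(f^i(p)) \geq n\bar\phi_\Lambda$ directly from \eqref{Equation:ErgodicMinimizingValue}: one iterates the orbit of $p$ to see that $\frac1{mn}\sum_{k=0}^{mn-1}\phi(f^k(p)) = \frac1n\sum_{i=0}^{n-1}\phi(f^i(p))$ for all $m$, so this average is an admissible value in the limit defining $\bar\phi_\Lambda$, whence the bound. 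No genuine difficulty is expected beyond this.
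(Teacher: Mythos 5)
Your proposal is correct and follows essentially the same route as the paper: apply the Anosov periodic shadowing lemma to obtain a genuine periodic orbit $p\in\Lambda$, compare Birkhoff sums via the Lipschitz estimate $\phi(x_i)-\phi(f^i(p))\geq-\Lip(\phi)\,d(x_i,f^i(p))$, and absorb the loss into the distortion term using $C\geq K_{APS}\Lip(\phi)$. You even spell out the justification of $\sum_{i=0}^{n-1}\phi(f^i(p))\geq n\bar\phi_\Lambda$, which the paper leaves implicit.
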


\begin{proof}
Proposition \ref{Proposition:AnosovPeriodicShadowingLemma} tells us that there exists a periodic orbit $p\in\Lambda$, $f^n(p)=p$, such that
\[
\sum_{i=0}^{n-1} d(f(x_i,f^i(p) ) \leq  K_{APS} \sum_{i=0}^{n-1} d(f(x_i) ,x_{i+1}).
\]
Then
\begin{align*}
\sum_{i=0}^{n-1} \big( \phi(x_i) &- \bar \phi_\Lambda +C d(f(x_i) ,x_{i+1}) \big) \\
&\geq \sum_{i=0}^{n-1} \big( \phi \circ f^i(p) - \bar\phi_\Lambda \big) + \sum_{i=0}^{n-1} \big( \phi(x_i) - \phi \circ f^i(p) +C d(f(x_i) ,x_{i+1}) \big)  \\
&\geq  \sum_{i=0}^{n-1} \big( \phi \circ f^i(p) - \bar\phi_\Lambda \big) + \sum_{i=0}^{n-1}\big( - \Lip(\phi) d(x_i,f^i(p)) + C d(f(x_i) ,x_{i+1}) \big) \\
&\geq  \sum_{i=0}^{n-1} \big( \phi \circ f^i(p) - \bar\phi_\Lambda \big) \geq 0. \qedhere
\end{align*}
\end{proof}

\begin{lemma} \label{lemma:PseudoPeriodicSegment}
Let  $N_\epsilon\geq1$ be the smallest number of balls of radius $\epsilon/2$ that  can cover $\Omega_{\epsilon}$. Let $(x_i)_{i=0}^{n}$ be a sequence of points of  $\Omega_{\epsilon}$. Then there exists $r \in \llbracket 1, N_\epsilon \rrbracket$ and times $0=\tau_0 < \tau_1 <  \cdots  < \tau_{r} =  n$ such that, 
\begin{enumerate}
\item $\forall\, k \in \llbracket 1,r-1\rrbracket, \ \forall\, l \in \llbracket 0, k-1 \rrbracket, \ \forall\, j \in  \llbracket \tau_{k}, n-1\rrbracket, \ d(x_{j},x_{\tau_l}) \geq \epsilon$,
\item $\forall\, k \in \llbracket 1,r-1\rrbracket$, if $\tau_{k} \geq \tau_{k-1}+2$  then $d(x_{\tau_{k}-1}, x_{\tau_{k-1}}) < \epsilon$,
\item either $d(x_{\tau_{r}-1}, x_{\tau_{r-1}}) < \epsilon$ or $d(x_{\tau_r},x_{\tau_{r-1}})<\epsilon$.
\end{enumerate}
\end{lemma}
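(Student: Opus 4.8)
The plan is to build the times $\tau_0 < \tau_1 < \cdots < \tau_r$ greedily from left to right, peeling off maximal ``excursion'' segments of the sequence, and then to argue that the process terminates in at most $N_\epsilon$ steps by a pigeonhole/covering argument. Concretely, I would set $\tau_0 = 0$ and, having defined $\tau_0 < \cdots < \tau_k$ with $\tau_k < n$, consider the set of indices $j \in \llbracket \tau_k, n \rrbracket$ such that $d(x_{j'}, x_{\tau_l}) \geq \epsilon$ for all $j' \in \llbracket \tau_k, j-1 \rrbracket$ and all $l \in \llbracket 0, k \rrbracket$ --- the longest initial run starting at $\tau_k$ that stays $\epsilon$-far from all previously chosen base points. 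Let $\tau_{k+1}$ be the last such index (so either $\tau_{k+1} = n$, or $\tau_{k+1} < n$ and $x_{\tau_{k+1}}$ is within $\epsilon$ of some earlier $x_{\tau_l}$, or the run hit $j-1 = n-1$). Property (i) is exactly the defining property of the excursion recorded at stage $k$: once we are past $\tau_k$ and strictly before $n$, we never get $\epsilon$-close to any $x_{\tau_l}$ with $l \leq k-1$ --- wait, I need to be careful about whether it is $l \le k-1$ or $l \le k$, and about the role of $n$; the statement only asks for $l \le k-1$ and $j \le n-1$, so the greedy choice should be phrased to give at least that.

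For property (ii), the point is that when $\tau_k \geq \tau_{k-1} + 2$, the index $\tau_k - 1$ lies strictly inside the $(k-1)$-th excursion window $\llbracket \tau_{k-1}, \tau_k - 1 \rrbracket$, and maximality of that window forces $x_{\tau_k - 1}$ (or rather the reason the run stopped) to be $\epsilon$-close to one of $x_{\tau_0}, \dots, x_{\tau_{k-1}}$; the indexing is arranged so that it is $x_{\tau_{k-1}}$ specifically. I would need to double-check the bookkeeping here: the natural greedy stopping rule is ``stop the $(k-1)$-th run at the first index whose point returns $\epsilon$-close to some base point already chosen,'' and I want that base point to be the most recent one $x_{\tau_{k-1}}$; if the greedy rule instead allows returning to an older base point, I should tighten the rule (e.g. always restart the ``forbidden set'' to include only the current endpoint, plus insist on returning to the immediately preceding $\tau$) so that (i) and (ii) hold simultaneously. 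Property (iii) is the terminal version of (ii): the last window ends at $n$, and either it ended because $x_{\tau_r - 1}$ came back $\epsilon$-close to $x_{\tau_{r-1}}$, or because we simply ran out of indices, in which case $\tau_r = n$ was reached and we should check $x_{\tau_r}$ itself is $\epsilon$-close to $x_{\tau_{r-1}}$ --- here one may have to allow a slightly different endpoint convention, which is why the statement offers the disjunction.

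For the bound $r \leq N_\epsilon$: I claim the points $x_{\tau_0}, x_{\tau_1}, \dots, x_{\tau_{r-1}}$ are pairwise $\epsilon$-separated. Indeed, for $l < k \leq r-1$ we have $\tau_k \in \llbracket \tau_k, n-1 \rrbracket$ (since $k \le r-1$ implies $\tau_k \le \tau_{r-1} < \tau_r = n$), so property (i) applied at stage $k$ with index $j = \tau_k$ and the earlier base point $x_{\tau_l}$ gives $d(x_{\tau_k}, x_{\tau_l}) \geq \epsilon$. A set of $r$ points that are pairwise $\epsilon$-separated cannot be covered by fewer than $r$ balls of radius $\epsilon/2$ (two such points in one ball would be at distance $< \epsilon$), hence $r \leq N_\epsilon$, and since the construction only stops when $\tau_r = n$ we also have $\tau_r = n$ as required. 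The main obstacle, and the only genuinely delicate part, is the precise formulation of the greedy stopping rule so that properties (i), (ii) and (iii) come out with exactly the index ranges and exactly the base point $x_{\tau_{k-1}}$ stated --- all three must be consequences of a single consistent choice of the $\tau_k$'s, so I would fix the rule first, then verify each property in turn, rather than trying to reverse-engineer the rule from the three properties separately.
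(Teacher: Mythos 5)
Your overall outline --- a greedy segmentation into excursions followed by a pigeonhole count of $\epsilon$-separated centres --- is the right one, and the bound $r\le N_\epsilon$ does follow from (i) exactly as you argue. But you explicitly leave the stopping rule unresolved (``I would need to double-check the bookkeeping'', ``the only genuinely delicate part''), and the tentative rule you sketch --- stop the $k$-th excursion at the \emph{first} index that gets $\epsilon$-close to some previous centre --- has a defect that your suggested fix does not touch.

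The defect is that a first-return rule cannot produce (i). Suppose the excursion starting at $\tau_k$ stops at its first return near $x_{\tau_k}$ (or near any $x_{\tau_l}$, $l\le k$). Nothing then prevents the sequence from wandering off and coming back $\epsilon$-close to $x_{\tau_k}$ again at some later time $j\in\llbracket\tau_{k+1},n-1\rrbracket$; that is exactly what (i) forbids. Your proposed tightening (shrink the forbidden set to the current endpoint, insist the return is to $x_{\tau_{k-1}}$) repairs the indexing of (ii) but does nothing about this persistence problem. (There is also a small bug in the rule as literally stated: with $l$ ranging over $\llbracket 0,k\rrbracket$ and $j'$ over $\llbracket\tau_k,j-1\rrbracket$, the condition already fails at $j'=\tau_k$, $l=k$, since $d(x_{\tau_k},x_{\tau_k})=0$.)

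The paper's construction differs in both respects and is what makes the lemma work: at stage $k$ it tracks returns to the single most recent centre only, setting $T=\{j\in\llbracket\tau_k+1,n\rrbracket:d(x_j,x_{\tau_k})<\epsilon\}$, and it stops after the \emph{last} return, $\tau_{k+1}=\max T+1$ (with $\tau_{k+1}=\tau_k+1$ if $T=\emptyset$, and $\tau_{k+1}=n$ if $\max T=n$). Taking the last return forces $d(x_j,x_{\tau_k})\ge\epsilon$ for \emph{every} $j\ge\tau_{k+1}$, so the avoidance of $x_{\tau_k}$ persists through all later stages and (i) is automatic; $\tau_{k+1}-1=\max T$ being within $\epsilon$ of $x_{\tau_k}$ gives (ii); and the boundary case $\max T=n$ gives the second disjunct of (iii). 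This last-return, single-predecessor device is precisely the ingredient your proposal identifies as missing but does not supply.
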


\begin{figure}[hbt]
\centering
\includegraphics[width=\textwidth]{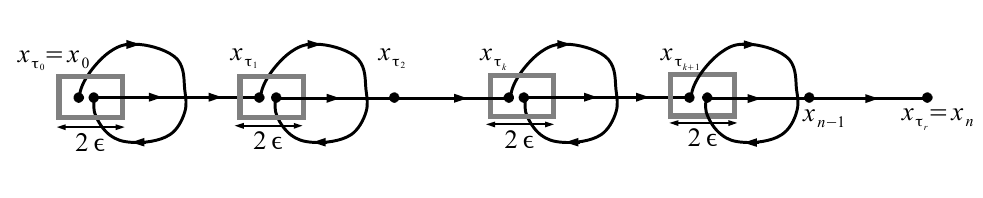}
\caption{The schematic $r$ returns of Lemma \ref{lemma:PseudoPeriodicSegment}.}
\end{figure}

\begin{proof}
We construct by induction the sequence $\tau_k$. Assume we have constructed $\tau_{k} < n$. Define
\[
T := \{ j \in \llbracket \tau_k+1, n\rrbracket : d(x_j,x_{\tau_k}) <   \epsilon \}.
\]
If $T = \emptyset$, choose $\tau_{k+1} = \tau_k +1$; if $T \not=\emptyset$ and $\max(T) <n$ then $\tau_{k+1} = \max(T)+1$, $d(x_{\tau_{k+1}-1},x_{\tau_k}) <  \epsilon$ and for every $j \geq \tau_{k+1}$, $d(x_j, x_{\tau_k}) \geq \epsilon$; if $\max(T)=n$ then $\tau_{k+1}=n$. Since $(x_{\tau_k})_{k=0}^{r-1}$ are $\epsilon$ apart, $r \leq N_\epsilon$.
\end{proof}

\begin{lemma} \label{lemma:FinitenessPseudoBirkhoffSums}
Let $C = K_{APS} \Lip(\phi)$ and $N_{AS}$ be the smallest number of balls of radius $\epsilon_{AS}/2$ that  can cover $\Omega_{AS}$. Let $\delta_{AS} := N_{AS} \, \diam(\Omega_{AS})$. Then for every $\epsilon_{AS}$-pseudo orbit $(x_i)_{i=0}^n$  of  $\Omega_{AS}$,
\[
\sum_{i=0}^{n-1} \big( \phi (x_i) -\bar \phi_\Lambda +C d(f(x_i),x_{i+1}) \big) \geq -\Lip(\phi)\delta_{AS} .
\]
\end{lemma}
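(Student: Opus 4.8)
\textbf{Proof plan for Lemma \ref{lemma:FinitenessPseudoBirkhoffSums}.}

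The plan is to reduce an arbitrary $\epsilon_{AS}$-pseudo orbit to a bounded number of periodic pseudo orbits plus a controlled error term, then invoke Lemma \ref{lemma:FinitenessPeriodicPseudoBirkhoffSums}. First I would apply Lemma \ref{lemma:PseudoPeriodicSegment} with $\epsilon = \epsilon_{AS}$ to obtain the return times $0 = \tau_0 < \tau_1 < \cdots < \tau_r = n$ with $r \leq N_{AS}$. The idea is that each block $(x_{\tau_{k-1}}, x_{\tau_{k-1}+1}, \ldots, x_{\tau_k - 1}, x_{\tau_{k-1}})$ — that is, the segment from $\tau_{k-1}$ to $\tau_k - 1$ closed up by reinserting $x_{\tau_{k-1}}$ — is, thanks to item (ii) of Lemma \ref{lemma:PseudoPeriodicSegment}, a \emph{periodic} $\epsilon_{AS}$-pseudo orbit (the closing-up transition $d(f(x_{\tau_k - 1}), x_{\tau_{k-1}})$ is controlled: either $\tau_k = \tau_{k-1}+1$ and the transition is $d(f(x_{\tau_{k-1}}), x_{\tau_{k-1}})$ which we must check is $\leq \epsilon_{AS}$ — actually one uses that $x_{\tau_k-1}$ is $\epsilon_{AS}$-close to $x_{\tau_{k-1}}$ so $d(f(x_{\tau_k-1}), x_{\tau_{k-1}}) \leq d(f(x_{\tau_k-1}), x_{\tau_k}) + d(x_{\tau_k}, \ldots)$, which needs a little care, or $\tau_k \geq \tau_{k-1}+2$ and item (ii) gives $d(x_{\tau_k - 1}, x_{\tau_{k-1}}) < \epsilon_{AS}$, whence the closing transition is bounded by $d(f(x_{\tau_k-1}), x_{\tau_k}) + d(x_{\tau_k}, x_{\tau_k-1}) \le 2\epsilon_{AS}$ — so one may need to have set up $\epsilon_{AS}$ as half the true shadowing radius, or run the covering argument with balls of radius $\epsilon_{AS}/2$, which is exactly why the statement fixes $N_{AS}$ via radius $\epsilon_{AS}/2$).

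Next I would apply Lemma \ref{lemma:FinitenessPeriodicPseudoBirkhoffSums} to each closed-up block. Since $C = K_{APS}\Lip(\phi) \geq K_{APS}\Lip(\phi)$, that lemma gives
\[
\sum_{i=\tau_{k-1}}^{\tau_k - 1} \big( \phi(x_i) - \bar\phi_\Lambda + C d(f(x_i), x_{i+1}^{\mathrm{block}}) \big) \geq 0,
\]
where $x_{i+1}^{\mathrm{block}}$ equals $x_{i+1}$ for $i < \tau_k - 1$ and equals $x_{\tau_{k-1}}$ for $i = \tau_k - 1$. Summing over $k \in \llbracket 1, r \rrbracket$ recovers $\sum_{i=0}^{n-1}\big(\phi(x_i) - \bar\phi_\Lambda + Cd(f(x_i), x_{i+1})\big)$ except that at each of the $r$ join indices $i = \tau_k - 1$ the term $C d(f(x_i), x_{i+1})$ has been replaced by $C d(f(x_i), x_{\tau_{k-1}})$. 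The discrepancy per join is $C\big( d(f(x_{\tau_k-1}), x_{\tau_{k-1}}) - d(f(x_{\tau_k-1}), x_{\tau_k}) \big) \leq C d(x_{\tau_k}, x_{\tau_{k-1}})$, but $d(x_{\tau_k}, x_{\tau_{k-1}}) \le \diam(\Omega_{AS})$, and there are at most $r \leq N_{AS}$ joins; hence the total discrepancy is at most $C N_{AS} \diam(\Omega_{AS})$. Since $C = K_{APS}\Lip(\phi)$, this is $K_{APS}\Lip(\phi)\, \delta_{AS}$, which is a bit larger than the claimed $\Lip(\phi)\delta_{AS}$ — so in fact one wants to arrange the bookkeeping so that the lost transition terms are bounded by $\Lip(\phi)$ times a distance rather than $C$ times a distance; this is achieved by noting that the join distances are genuinely small ($< \epsilon_{AS}$ or so by items (ii)--(iii) of Lemma \ref{lemma:PseudoPeriodicSegment}) and absorbing $C d(\cdot,\cdot)$ at the join into a $\phi$-Lipschitz comparison against the shadowing periodic point, exactly as in the proof of Lemma \ref{lemma:FinitenessPeriodicPseudoBirkhoffSums}, so that only the term $\sum \phi\circ f^i(p) - \bar\phi_\Lambda \geq 0$ survives and the genuinely lost part is a single $\Lip(\phi)\diam(\Omega_{AS})$ per block.

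\textbf{Main obstacle.} The real subtlety is the treatment of the $r$ ``cut points'': one must show that breaking the pseudo orbit at the return times and closing each piece into a periodic pseudo orbit costs only $O(N_{AS}\diam(\Omega_{AS}))$, uniformly in $n$. This hinges on (a) verifying each closed-up block is genuinely an $\epsilon_{AS}$-pseudo orbit (forcing the choice of radius $\epsilon_{AS}/2$ in defining $N_{AS}$, or an a priori halving of the shadowing constant), and (b) controlling the single altered transition term at each join by the triangle inequality, using that the block endpoints are within $\epsilon_{AS}$ (resp.\ within $\diam(\Omega_{AS})$) of each other. Everything else is the same telescoping-plus-shadowing computation already carried out in Lemma \ref{lemma:FinitenessPeriodicPseudoBirkhoffSums}, applied $r \le N_{AS}$ times and summed.
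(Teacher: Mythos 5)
Your overall plan — split the pseudo orbit at the return times from Lemma \ref{lemma:PseudoPeriodicSegment}, reduce each piece to a periodic pseudo orbit, and invoke Lemma \ref{lemma:FinitenessPeriodicPseudoBirkhoffSums} — is the paper's plan. But the way you close up the blocks is wrong, and this is not a bookkeeping nuisance but a genuine obstruction.

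You close the block $(x_{\tau_{k-1}},\dots,x_{\tau_k-1})$ by appending $x_{\tau_{k-1}}$ at the end, so the closing transition is $d\bigl(f(x_{\tau_k-1}),x_{\tau_{k-1}}\bigr)$. Nothing controls this quantity. Item (ii) of Lemma \ref{lemma:PseudoPeriodicSegment} says $d(x_{\tau_k-1},x_{\tau_{k-1}})<\epsilon_{AS}$, i.e.\ the \emph{preimage} endpoints are close, not the image $f(x_{\tau_k-1})$ and $x_{\tau_{k-1}}$. The triangle inequalities you write involve $d(x_{\tau_k},x_{\tau_{k-1}})$ or $d(x_{\tau_k},x_{\tau_k-1})$, and the first is actually $\geq\epsilon_{AS}$ by item (i), while the second is a full dynamical step and so is of order $\diam(\Omega_{AS})$, not $\epsilon_{AS}$. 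So the ``closed-up'' block is not an $\epsilon_{AS}$-pseudo orbit (nor a $C\epsilon_{AS}$-pseudo orbit for any fixed $C$), and Lemma \ref{lemma:FinitenessPeriodicPseudoBirkhoffSums} simply does not apply to it. The halving hidden in the definition of $N_{AS}$ cannot repair a transition that isn't small in the first place.

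The paper sidesteps this by never modifying a transition. It views $(x_{\tau_k},\dots,x_{\tau_{k+1}-1})$ as an approximately periodic pseudo orbit because the \emph{last point} $x_{\tau_{k+1}-1}$ is $\epsilon_{AS}$-close to the \emph{first point} $x_{\tau_k}$ (item (ii) applied with the shift $k\mapsto k+1$); the $m-1$ transitions inside this segment are the original ones and already $\leq\epsilon_{AS}$. Then Lemma \ref{lemma:FinitenessPeriodicPseudoBirkhoffSums} gives $\sum_{i=\tau_k}^{\tau_{k+1}-2}\phi_i\geq 0$, and the single leftover term $\phi_{\tau_{k+1}-1}$ is bounded crudely from below by $-\Lip(\phi)\diam(\Omega_{AS})$ using only $\phi(x)-\bar\phi_\Lambda\geq-\Lip(\phi)\diam(\Omega_{AS})$ and $Cd(\cdot,\cdot)\geq 0$. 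This yields the per-block loss $-\Lip(\phi)\diam(\Omega_{AS})$ directly, which also eliminates the spurious extra factor of $K_{APS}$ that you correctly noticed your bookkeeping produces: the lost term never enters through a $C\cdot d(\cdot,\cdot)$ comparison, so $C$ never appears in the error. So the fix you gesture at in your last paragraph is indeed the right idea, but it has to be implemented by truncating the block at $\tau_{k+1}-1$ and treating the trailing index separately, not by surgically replacing $x_{\tau_k}$ with $x_{\tau_{k-1}}$ in the transition.
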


\begin{proof}
We split the pseudo orbit $(x_i)_{i=0}^{n-1}$ into $r \leq N_{AS}$ segments of the form $(x_i)_{i=\tau_k}^{\tau_{k+1}-1}$   according to Lemma \ref{lemma:PseudoPeriodicSegment}, for $0 \leq k \leq r-1$ with $0 =\tau_0 < \tau_1 < \cdots < \tau_r=n$.  To simplify the notations, denote
\[
\phi_i := \phi (x_i) -\bar\phi_\Lambda +C d(f(x_i),x_{i+1}).
\]
Notice that for every $i \in \llbracket 0,n-1\rrbracket$
\[
\phi_{i} \geq -\Lip(\phi) \diam(\Omega_{AS}).
\] 
If $\tau_{k+1} \geq \tau_k +2$ and $k \in \llbracket 0, r-1\llbracket$ then $d(x_{\tau_k},x_{\tau_{k+1}-1}) < \epsilon_{AS}$, $(x_i)_{i=\tau_k}^{\tau_{k+1}-1}$ is a periodic  pseudo orbit as in Lemma  \ref{lemma:FinitenessPeriodicPseudoBirkhoffSums} and
\[
\sum_{i=\tau_k}^{\tau_{k+1}-2} \phi_i \geq 0, \quad \sum_{i=\tau_k}^{\tau_{k+1}-1} \phi_i \geq -\Lip(\phi) \diam(\Omega_{AS}).
\]
If $\tau_r \geq \tau_{r-1}+2$ then either $(x_i)_{i=\tau_{r-1}}^{\tau_r-1}$ or $(x_i)_{i=\tau_{r-1}}^{\tau_r}$ is a periodic  pseudo orbit.  In both cases we have
\[
\sum_{i=\tau_{r-1}}^{\tau_{r}-1} \phi_i \geq -\Lip(\phi) \diam(\Omega_{AS}).
\]
If $\tau_{k+1} =  \tau_k +1$ then
\[
\sum_{i=\tau_k}^{\tau_{k+1}-1} \phi_i = \phi_{\tau_k} \geq -\Lip(\phi) \diam(\Omega_{AS}).
\]
By adding these inequalities for $k \in \llbracket 0, r-1 \rrbracket$, we have
\[
\sum_{i=\tau_0}^{\tau_{r}-1} \phi_i \geq -\Lip(\phi) N_{AS}\diam(\Omega_{AS}). \qedhere
\]
\end{proof}

We recall that $K_{APS}$, $\epsilon_{AS}$, have been defined in Theorem \ref{Theorem:AnosovShadowingImproved}, Proposition \ref{Proposition:AnosovPeriodicShadowingLemma}, and $N_{AS}$, $\delta_{AS}$, in Lemma \ref{lemma:FinitenessPseudoBirkhoffSums}.

\begin{proof}[\bf Proof of Proposition \ref{Proposition:ValidityDiscreteCriteria}]
Let  $(x_i)_{i=0}^n$ be a sequence of points of $\Omega_{AS}$. We split the sequence into disjoint segments $(x_i)_{i=\tau_k}^{\tau_{k+1}-1}$, $0= \tau_0 < \tau_1 < \cdots < \tau_k < \tau_{k+1} < \cdots  < \tau_r=n$, having one of the following form.

{\it Segment of the first kind:} $\tau_{k+1}= \tau_k + 1$ and $d(f(x_{\tau_k}),x_{\tau_{k+1}}) \geq \epsilon_{AS}$. Then
\begin{gather*}
\phi(x_{\tau_k}) - \bar \phi_\Lambda \geq -\Lip(\phi) \diam(\Omega_{AS}), \quad d(f(x_{\tau_k}),x_{\tau_k + 1}) \geq \epsilon_{AS}.
\end{gather*}
By choosing $C \geq \Lip(\phi)\diam(\Omega_{AS})/\epsilon_{AS}$, we obtain
\[
\phi(x_{\tau_k}) - \bar \phi_\Lambda + C  d(f(x_{\tau_k}),x_{\tau_k + 1})\geq0.
\]

{\it Segment of the second kind:} $\tau_{k+1} \geq  \tau_k + 2$ and
\[
\left\{\begin{array}{l}
\forall\, \tau_k \leq i \leq \tau_{k+1}-2, \ d(f(x_i),x_{i+1}) < \epsilon_{AS},  \\
d(f(x_{\tau_{k+1}-1}), x_{\tau_{k+1}}) \geq \epsilon_{AS}.
\end{array}\right.
\]
Then $(x_i)_{i=\tau_k}^{\tau_{k+1}-1}$ is a pseudo orbit. By using Lemma \ref{lemma:FinitenessPseudoBirkhoffSums} and $C \geq K_{APS} \Lip(\phi)$, we have
\begin{gather*}
\sum_{i=\tau_k}^{\tau_{k+1}-2} \big( \phi (x_i) -\bar \phi_\Lambda +C d(f(x_i),x_{i+1}) \big) \geq -\Lip(\phi)\delta_{AS}, \\
\phi(x_{\tau_{k+1}-1}) - \bar \phi_\Lambda + C d(f(x_{\tau_{k+1}-1}),x_{\tau_{k+1}}) \geq -\Lip(\phi) \diam(\Omega_{AS}) + C \epsilon_{AS}.
\end{gather*}
By choosing $C \geq \Lip(\phi) (\delta_{AS}+\diam(\Omega_{AS}))/\epsilon_{AS}$, we obtain
\[
\sum_{i=\tau_k}^{\tau_{k+1}-1} \big( \phi(x_i) - \bar \phi_\Lambda + C d(f(x_i), x_{i+1})  \big) \geq 0.
\]

{\it Segment of the third kind:} if it exists, this segment is the last one and $(x_i)_{i=\tau_{r-1}}^{\tau_r}$ is a pseudo orbit. By using again Lemma \ref{lemma:FinitenessPseudoBirkhoffSums} 
\[
\sum_{i=\tau_{r-1}}^{\tau_r-1} \big( \phi (x_i) -\bar \phi_\Lambda +C d(f(x_i),x_{i+1}) \big) \geq -\Lip(\phi)\delta_{AS}.
\]
Notice that we can choose $K_\Lambda := \max( K_{APS}, (N_{AS}+1)  \diam(\Omega_{AS})/\epsilon_{AS})$ in Theorem~\ref{Theorem:DiscreteSubactionExistence}.
\end{proof}

\begin{proof}[\bf Proof of Theorem \ref{Theorem:DiscreteSubactionExistence}]
The proof readily follows from the conclusions of  Propositions \ref{proposition:WeakKAMsolution} and \ref{Proposition:ValidityDiscreteCriteria}.
\end{proof}

\vfill
\pagebreak
\appendix
\appendixpage
\addappheadtotoc

\section{Local hyperbolic dynamics}
\label{Appendix:LocalHyperbolicDynamics}

We recall in this section the local theory of hyperbolic dynamics. The dynamics is obtained by iterating a sequence of (non linear) maps defined locally and close to uniformly hyperbolic linear maps.  The notion of adapted local charts is defined in \ref{section:AdaptedLocalCharts}. In these charts the expansion along the unstable direction, or the contraction along the stable direction, is realized at the first iteration, instead of after some number of iterations. It is a standard notion that can be extended in different directions, see for instance Gourmelon \cite{Gourmelon2007}.

\subsection{Adapted local hyperbolic  map}

We recall in this section the notion of local hyperbolic maps. The constants $(\sigma^s,\sigma^u,\eta,\rho)$ that appear in the following definition are used in the proof of Theorem \ref{Theorem:AdaptedAnosovShadowingLemma}.

\begin{definition}[Adapted local hyperbolic map] \label{Definition:AdaptedLocalHyperbolicMap}
Let $(\sigma^s,\sigma^u,\eta,\rho)$ be positive real numbers called {\it constants of hyperbolicity}.  Let $\mathbb{R}^d = E^u \oplus E^s $ and $\mathbb{R}^d= \tilde E^u \oplus \tilde E^s$ be two Banach spaces equiped with two norms $| \cdot|$ and $\| \cdot \|$ respectively. Let $P^u : \mathbb{R}^d \to E^u$ and $P^s : \mathbb{R}^d \to E^s$ be the two linear projectors associated with the splitting $\mathbb{R}^d =  E^u \oplus  E^s$ and similarly  $\tilde P^u : \mathbb{R}^d \to \tilde E^u$ and $\tilde P^s : \mathbb{R} \to \tilde E^s$ be the two projectors associated with  $\mathbb{R}^d = \tilde E^u \oplus \tilde E^s$. Let $B(\rho)$, $B^u(\rho)$, $B^s(\rho)$ be the balls of radius $\rho$ on each $E, E^u, E^s$ respectively, with respect to the norm $| \cdot |$. Let $\tilde B(\rho)$, $\tilde B^u(\rho)$,  $\tilde B^s(\rho)$ be the corresponding balls with respect to the norm $\| \cdot \|$. We assume that both norms are {\it sup norm adapted to the splitting} in the sense,
\[
\left\{ \begin{array}{l}
\forall v,w \in E^u \times E^s, \ |v+w| = \max(|v|,|w|), \\
\forall v,w \in \tilde E^u \times \tilde E^s, \ \|v+w\| = \max(\|v\|, \|w\|).
\end{array}\right.
\]
In particular $B(\rho) = B^u(\rho) \times B^s(\rho)$, $\tilde{B}(\rho) = \tilde{B}^u(\rho) \times \tilde{B}^s(\rho)$. We also  assume
\[
\sigma^u > 1 > \sigma^s, \quad \eta < \min\Big( \frac{\sigma^u-1}{6},\frac{1-\sigma^s}{6} \Big), \quad \epsilon(\rho) := \rho \min\Big( \frac{\sigma^u-1}{2},\frac{1-\sigma^s}{8}\Big).
\]
An {\it adapted local hyperbolic map with respect to the two norms and the constants of hyperbolicity } is a set of data $(f,A, E^{u/s}, \tilde E^{u/s}, |\cdot|,\| \cdot\|)$ such that:
\begin{enumerate}
\item $f : B(\rho) \to \mathbb{R}^d$ is a Lipschitz map,
\item  $A:\mathbb{R}^d \to \mathbb{R}^d$ is a linear map which may not be invertible and is defined into block matrices
\[
A = \begin{bmatrix}
A^u & D^u \\ D^s &  A^s
\end{bmatrix}, \quad 
\left\{\begin{array}{l}
(v,w) \in E^u \times E^s, \\
A(v+w) = \tilde v + \tilde w,
\end{array}\right.
\ \Rightarrow \
\left\{\begin{array}{l}
\tilde v = A^u v + D^u w \in \tilde E^u, \\ \tilde w = D^s v  + A^s w \in \tilde E^s,
\end{array}\right.
\]
that satisfies
\[
\left\{\begin{array}{l}
\forall \, v \in E^u, \ \|A^u v\| \geq \sigma^u \|v\|, \\
\forall \, w \in E^s, \ \|A^s w\| \leq \sigma^s \|w\|,
\end{array}\right. \quad\text{and}\quad
\left\{\begin{array}{ll}
\|D^u\| \leq \eta, & \Lip(f-A) \leq \eta, \\ \|D^s\| \leq \eta, & \|f(0)\| \leq \epsilon(\rho),
\end{array}\right.
\]
where  the $\Lip$ constant is computed using the two norms $| \cdot |$ and $\| \cdot \|$.
\end{enumerate}
\end{definition}

The constant $\sigma^u$ is called the {\it expanding constant}, $\sigma^s$ is called the {\it contracting constant}. The constant $\rho$ represents a uniform size of local charts. The constant $\epsilon(\rho)$ represents the error in a pseudo-orbit. The constant $\eta$ represents a deviation from the linear map and should be thought of as small compared to the gaps $\sigma^u-1$ and $1-\sigma^s$. Notice that $\epsilon(\rho)$ is independent of $\eta$. The map $f:B(\rho) \to \mathbb{R}^d$ should be considered as a perturbation of its linear part $A$.

\subsection{Adapted local graph transform}

The graph transform is a perturbation technique of a hyperbolic linear map. A hyperbolic linear map preserves a splitting into an unstable vector space on which the linear map is expanding, and a stable vector space on which  the linear map is contracting. We show that a Lipschitz map close to  a hyperbolic linear map  also preserves similar objects that are Lipschitz graphs tangent to the unstable or stable direction. The operator $A$ may have a non trivial kernel, and we don't assume $f$ to be invertible.

\begin{definition} \label{Definition:LocalLipschitGraph}
Let  $(\sigma^u,\sigma^s,\eta,\rho)$, $\mathbb{R}^d =  E^u \oplus E^s  = \tilde E^u \oplus \tilde E^s$ be as in Definition \ref{Definition:AdaptedLocalHyperbolicMap}.
We denote by $\mathcal{G}^u$ the set of Lipschitz graphs over the unstable direction $E^u$ with controlled Lipschitz constant and height. More precisely
\[
\mathcal{G}^u = \Big\{ [G : B^u(\rho) \to B^s(\rho)] : \Lip(G) \leq \frac{6\eta}{\sigma^u-\sigma^s}, \ |G(0)| \leq \frac{\rho}{2} \Big\}.
\]
We denote similarly by $\tilde{\mathcal{G}}^u$ the set of Lipschitz graphs
\[
\tilde{\mathcal{G}}^u := \Big\{ [\tilde G : \tilde B^u(\rho) \to \tilde B^s(\rho)] : \Lip(\tilde G) \leq \frac{6\eta}{\sigma^u-\sigma^s}, \ \|\tilde G(0)\| \leq \frac{\rho}{2} \Big\}.
\]
The graph of $G \in \mathcal{G}^u$ is the subset of $B(\rho)$:
\[
\Graph(G) := \{ v+G(v) : v \in B^u(\rho) \}.
\]
\end{definition}

Notice that $\Lip(G),\Lip(\tilde G) \leq \frac{1}{2}$ for every $(G,\tilde G) \in \mathcal{G}^u \times \tilde{\mathcal{G}}^u$, thanks to the assumptions on $\eta$. Notice also that the Lipschitz constant of $G$ goes to zero as $f$ becomes more and more linear, as  $\eta \to 0$, independently of the location of $f(0)$ controlled by $\epsilon(\rho)$ depending only on $(\sigma^u,\sigma^s, \rho)$. 

\begin{proposition}[Forward local graph transform] \label{Proposition:ForwardLocalGraphTransform}
Let  $(\sigma^u,\sigma^s,\eta,\rho,\epsilon)$, $\mathbb{R}^d = E^u \oplus E^s  = \tilde E^u \oplus \tilde E^s$, and $(A,f)$ be as defined in \ref{Definition:AdaptedLocalHyperbolicMap}. Then
\begin{enumerate}
\item For every graph $G \in \mathcal{G}^u$ there exists a unique graph $\tilde G \in \tilde{\mathcal{G}}^u$ such that
\[
\left\{\begin{array}{l}
\forall \, \tilde v \in \tilde B^u(\rho), \ \exists ! \, v\in B^u(\rho), \ \tilde v = \tilde P^u f(v+G(v)), \\
\tilde G(\tilde x) = \tilde P^s f(v+G(v)).
\end{array}\right.
\]

\item for every $G_1,G_2 \in \mathcal{G}^u$ and $\tilde G_1,\tilde G_2$  the corresponding graphs,
\[
\| \tilde G_1 - \tilde G_2 \|_{\infty} \le (\sigma^s+2\eta) \,| G_1 - G_2 |_{\infty}.
\]

\item the map
\[
(\mathcal{T})^u:= 
\left\{\begin{array}{l}
\mathcal{G}^u \to \tilde{\mathcal{G}}^u, \\
G \mapsto \tilde G,
\end{array}\right.
\]
 is called the forward graph transform. 

\item for every $G \in \mathcal{G}^u$, $f( \Graph(G)) \supseteq \Graph(\tilde G)$ ,
\[
\forall\,  q_1, q_2 \in \Graph(G) \cap f^{-1}(\Graph(\tilde G)), \ \
\| f(q_1) -  f(q_2) \| \geq (\sigma^u-3\eta) \, |q_1 - q_2|.
\]
\end{enumerate}
\end{proposition}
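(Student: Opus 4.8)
The plan is to derive all four assertions from a single implicit construction, so I would first fix notation. Write $f = f(0) + A(\cdot) + g$ with $g := f - f(0) - A$, so $g(0)=0$ and $\Lip(g) \le \eta$; since both norms are sup-norm adapted to the splittings, the projectors $\tilde P^u, \tilde P^s$ and the slice map $v \mapsto v + G(v)$ (for $G \in \mathcal{G}^u$) are $1$-Lipschitz, $\Lip(G) \le \tfrac12$, and $A^u : E^u \to \tilde E^u$ is a linear isomorphism expanding distances by at least $\sigma^u$ (injective by the hyperbolicity assumption on $A^u$, surjective since $\dim E^u = \dim \tilde E^u$), hence $(A^u)^{-1}$ contracts by at least $1/\sigma^u$. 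For $G \in \mathcal{G}^u$ set $\Phi_G(v) := \tilde P^u f(v + G(v))$ and $R_G(v) := \Phi_G(v) - A^u v = \tilde P^u f(0) + D^u G(v) + \tilde P^u g(v + G(v))$; then $\Lip(R_G) \le \eta(1+\Lip(G)) \le \tfrac32\eta$ and $|R_G(0)| \le \|f(0)\| + 2\eta|G(0)| \le \epsilon(\rho) + \eta\rho$.

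For \emph{item 1}, for each fixed $\tilde v \in \tilde B^u(\rho)$ I would solve $\Phi_G(v) = \tilde v$ by a contraction argument: set $\Psi_{\tilde v}(v) := (A^u)^{-1}(\tilde v - R_G(v))$, note $\Lip(\Psi_{\tilde v}) \le \tfrac{3\eta}{2\sigma^u} < \tfrac14$, and use $|\Psi_{\tilde v}(v)| \le \tfrac1{\sigma^u}(\rho + \epsilon(\rho) + \tfrac52\eta\rho)$ together with $\epsilon(\rho)/\rho \le \tfrac{\sigma^u-1}{2}$ and $\eta < \tfrac{\sigma^u-1}{6}$ to see that $\Psi_{\tilde v}$ maps $\bar B^u(\rho)$ into itself; the Banach fixed point theorem then produces the unique $v = v_G(\tilde v) \in B^u(\rho)$ with $\Phi_G(v) = \tilde v$, and one defines $\tilde G(\tilde v) := \tilde P^s f(v_G(\tilde v) + G(v_G(\tilde v)))$. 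To check $\tilde G \in \tilde{\mathcal{G}}^u$ I would project $f(x_1) - f(x_2)$, with $x_i = v_i + G(v_i)$ and $v_i = v_G(\tilde v_i)$, onto the two subspaces: onto $\tilde E^u$, $\|\tilde v_1 - \tilde v_2\| \ge (\sigma^u - \tfrac32\eta)|v_1 - v_2|$; onto $\tilde E^s$, $\|\tilde G(\tilde v_1) - \tilde G(\tilde v_2)\| \le (2\eta + \sigma^s\Lip(G))|v_1-v_2|$; dividing gives $\Lip(\tilde G) \le \tfrac{6\eta}{\sigma^u - \sigma^s}$ (using $9\eta \le 4(\sigma^u-\sigma^s)$), while the height bound $\|\tilde G(0)\| \le \tfrac\rho2$ follows by estimating $\|\tilde P^s f(v_G(0) + G(v_G(0)))\|$ with $|v_G(0)| \le |R_G(0)|/(\sigma^u - \tfrac32\eta)$ and $\epsilon(\rho)/\rho \le \tfrac{1-\sigma^s}{8}$.

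\emph{Item 3} is just the definition of $(\mathcal{T})^u$. For \emph{item 2}, fix $\tilde v$, let $v_i = v_{G_i}(\tilde v)$ and $x_i = v_i + G_i(v_i)$: subtracting $\tilde P^u f(x_1) = \tilde v = \tilde P^u f(x_2)$ and using the $\sigma^u$-expansion bounds $|v_1 - v_2|$ by a constant multiple of $\eta\|G_1 - G_2\|_\infty$; substituting this into the $\tilde E^s$-projection of $f(x_1) - f(x_2)$, and using $\Lip(G_i) < \tfrac12$, $\eta < \tfrac{\sigma^u-1}{6}$, $\sigma^s < 1$, yields $\|\tilde G_1(\tilde v) - \tilde G_2(\tilde v)\| \le (\sigma^s + 2\eta)\|G_1 - G_2\|_\infty$, and since $\tilde v$ is arbitrary this is item 2. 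For \emph{item 4}, the inclusion $f(\Graph(G)) \supseteq \Graph(\tilde G)$ is immediate from item 1, as $\tilde v + \tilde G(\tilde v) = f(v_G(\tilde v) + G(v_G(\tilde v)))$; and if $q_i = v_i + G(v_i) \in \Graph(G)$ with $f(q_i) \in \Graph(\tilde G)$, then $|q_1 - q_2| = |v_1 - v_2|$ because $\Lip(G) \le \tfrac12$, and the $\tilde E^u$-projection of $f(q_1) - f(q_2)$ gives $\|f(q_1) - f(q_2)\| \ge \|\tilde P^u(f(q_1) - f(q_2))\| \ge (\sigma^u - \eta - \eta\Lip(G))|v_1 - v_2| \ge (\sigma^u - 3\eta)|q_1 - q_2|$.

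The only conceptual ingredient is the Banach contraction principle in item 1; everything else is cone bookkeeping that the sup-norm structure keeps transparent, since the projectors never mix the unstable and stable components. The real work — and the only place where the precise hypotheses $\eta < \min\big(\tfrac{\sigma^u-1}{6}, \tfrac{1-\sigma^s}{6}\big)$ and $\epsilon(\rho) = \rho\min\big(\tfrac{\sigma^u-1}{2}, \tfrac{1-\sigma^s}{8}\big)$ are used — is checking that the numerology closes: that $\Psi_{\tilde v}$ preserves $\bar B^u(\rho)$, that $\tilde G$ lands back in $\tilde{\mathcal{G}}^u$ with slope $\tfrac{6\eta}{\sigma^u-\sigma^s}$ and height $\tfrac\rho2$, and that the contraction constant is $\sigma^s + 2\eta$, so that the transform can be iterated as in the proof of Theorem \ref{Theorem:AdaptedAnosovShadowingLemma}. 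I expect this domain-stability accounting, not any individual inequality, to be the main obstacle; it is elementary but must be done keeping the two adapted norms explicitly separate.
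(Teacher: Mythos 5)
The paper does not actually prove this proposition: it stops at the statement and refers the reader to Hirsch--Pugh--Shub for a detailed proof, so there is no in-text argument to compare against. Your sketch is the standard one and it does check out. The reduction of item (i) to a contraction fixed point for $\Psi_{\tilde v}(v)=(A^u)^{-1}\bigl(\tilde v-R_G(v)\bigr)$ is exactly the classical graph-transform mechanism; the key quantitative facts you invoke (that the sup norm makes the slice map $v\mapsto v+G(v)$ an isometry, that $\Lip(R_G)\le \tfrac32\eta$, that $\Psi_{\tilde v}$ maps $\bar B^u(\rho)$ strictly into $B^u(\rho)$ under $\epsilon(\rho)/\rho\le\tfrac{\sigma^u-1}{2}$ and $\eta<\tfrac{\sigma^u-1}{6}$) are correct, and items (ii)--(iv) then follow by the two-projection bookkeeping you describe. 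In item (ii) the inequality one ultimately needs is $3\eta+\sigma^s\le1$, which follows from $\eta<\tfrac{1-\sigma^s}{6}$ rather than from the $\tfrac{\sigma^u-1}{6}$ bound you mention, but both are part of Definition \ref{Definition:AdaptedLocalHyperbolicMap}, so this is only a citation slip, not a gap. Your closing remark is apt: the only nontrivial content is the domain-stability arithmetic that keeps $\tilde G$ in $\tilde{\mathcal G}^u$ with the stated slope $\tfrac{6\eta}{\sigma^u-\sigma^s}$ and height $\tfrac{\rho}{2}$, and that arithmetic closes with room to spare (for instance the height estimate comes out below $\tfrac{7}{16}(1-\sigma^s)\rho$). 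A fully written version would just expand those displayed inequalities; the structure is right.
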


For a detailed proof of this proposition we suggest the monography by Hirsch, Pugh, Shub \cite{HirschPughShub1977}.

\subsection{Adapted local charts}
\label{section:AdaptedLocalCharts}

We consider in this section a $C^1$ dynamical systems $(M,f)$ on a manifold $M$ of dimension $d\geq2$ without boundary,   $\Lambda \subseteq M$ a  hyperbolic  $f$-invariant compact set, and $\Omega \supset \Lambda$ an open neighborhood of $\Lambda$ of compact closure. Let  $\lambda^s < 0 < \lambda^u$, $C_\Lambda\geq1$,  and $T_MM  = E^u_\Lambda(x) \oplus E^s_\Lambda(x)$   as in Definition \ref{Definition:LocallyHyperbolicMap}. We show that we can construct a family of local charts well adapted to the hyperbolicity of $\Lambda$. The existence of such a family  depends only on the  continuity of  $x \in \Lambda \mapsto E^u_\Lambda(x)  \oplus E^s_\Lambda(x)$ and the $C^1$ regularity of $f$. 

\begin{definition}[Adapted local charts] \label{Definition:AdaptedLocalCharts}
Let $(M,f)$ be a $C^1$ dynamical system, $U \subseteq M$ be an open set, and $\Lambda \subseteq U$ be an $f$-invariant compact hyperbolic set with constants of hyperbolicity $(\lambda^u,\lambda^s)$. {\it A family of  adapted local charts}  is a set of data  $\Gamma_\Lambda=(\Gamma, E, N, F,A)$ and a set of constants  $(\sigma^u,\sigma^s,\eta,\rho)$   satisfying the following properties: 
\begin{enumerate}
\item \label{item:AdaptedLocalCharts_01} The constants $(\sigma^u,\sigma^s,\eta,\rho)$  are chosen so that,
\begin{gather*}
\exp(\lambda^s) < \sigma^s < 1 < \sigma^u < \exp(\lambda^u) \\ 
\eta < \min \Big( \frac{\sigma^u-1}{6}, \frac{1-\sigma^s}{6} \Big), \quad  \epsilon(\rho) := \rho \min\Big( \frac{\sigma^u-1}{2},\frac{1-\sigma^s}{8}\Big)
\end{gather*}
where $\lambda^u,\lambda^s$ are the constants of hyperbolicity of $\Lambda$ as in Definition \ref{Definition:LocallyHyperbolicMap}. Notice that  $\epsilon(\rho) < \rho/8$.

\item \label{item:AdaptedLocalCharts_02} $\Gamma =(\gamma_x)_{x \in\Lambda}$ is a parametrized family of charts such that for every $x \in\Lambda$, $\gamma_x : B(1) \subset \mathbb{R}^d \to M$ is a diffeomorphism from the unit ball $B(1)$ of $\mathbb{R}^d$  onto an open set in $M$, $\gamma_x(0) = x$, and such that the $C^1$ norm of $\gamma_x, \gamma_x^{-1}$ is uniformly bounded with respect to $x$.

\item \label{item:AdaptedLocalCharts_03} $E = (E_x^{u/s})_{x \in\Lambda}$ is a parametrized family of splitting $\mathbb{R}^d = E_x^u \oplus E_x^s$  obtained by  pull backward of the corresponding splitting on $T_\Lambda M$ by the tangent map $T_0\gamma_x$ at the origin of $\mathbb{R}^d$,
\[
E_x^u = (T_0 \gamma_x)^{-1}E_\Lambda^u(x), \quad E_x^s := (T_0\gamma_x)^{-1}E_\Lambda^s(x),
\]
and by $\Id = P_x^u + P_x^s$,  the corresponding projectors onto $E_x^u,E_x^s$ respectively.

\item \label{item:AdaptedLocalCharts_05} $N:=(\| \cdot \|_x)_{x \in\Lambda}$ is a $C^0$ parametrized family of  norms. The {\it adapted local norm} is a sup norm adapted to the splitting $E_x^u \oplus E_x^s$ that satisfies
\[
\forall\,v \in E_x^u, \ w \in E_x^s, \quad \|v+w \|_x = \max(\|v\|_x,\|w\|_x).
\]
The ball of radius $\rho$ centered at the origin of $\mathbb{R}^d$ is denoted by $B_x(\rho)$.

\item \label{item:AdaptedLocalCharts_06} The constant $\rho$ is chosen so that $\gamma_x(B_x(\rho)) \subset U$ and
\[
\forall\, x,y \in \Lambda, \quad \big[ f(x) \in \gamma_y(B_y(\rho)) \ \Rightarrow \ f(\gamma_x(B_x(\rho)) \subseteq \gamma_y(B(1)) \big].
\]

\item \label{item:AdaptedLocalCharts_07} $F:=(f_{x,y})_{x,y \in\Lambda}$ is a  family of $C^1$ maps $f_{x,y} : B_x(\rho) \to B(1)$ which is parametrized by  couples of  points $(x,y) \in \Lambda$ satisfying  $f(x) \in \gamma_y(B_y(\rho))$. The {\it  adapted local map} is defined by
\[
\forall\, v \in B_x(\rho), \ f_{x,y}(v) := \gamma_y^{-1} \circ f \circ \gamma_x(v).
\]

\item \label{item:AdaptedLocalCharts_08} $A:=(A_{x,y})_{x,y \in\Lambda}$ is the family of tangent maps $A_{x,y} : \mathbb{R}^d \to \mathbb{R}^d$ of $f_{x,y}$ at the origin, that is parametrized by the couples of points $x,y \in \Lambda$ satisfying  $f(x) \in \gamma_y(B_y(\rho))$. Let 
\[
A_{x,y} := Df_{x,y}(0),
\]
where $Df_{x,y}(0)$ denotes the differential map of $v \mapsto f_{x,y}(v)$ at $v=0$.

\item \label{item:AdaptedLocalCharts_09}  For every $x,y \in\Lambda$ satisfying $f(x) \in \gamma_y(B_y(\epsilon))$, the set of data 
\[
(f_{x,y},A_{x,y}, E_x^{u/s},E_y^{u/s},\| \cdot \|_x ,\| \cdot \|_y)
\] 
is an adapted local hyperbolic map with respect to  the constant of hyperbolicity $(\sigma^u,\sigma^s,\eta,\rho)$ as in Definition \ref{Definition:AdaptedLocalHyperbolicMap}. We have
\begin{gather*}
A_{x,y} =\begin{bmatrix} P^u_yA_{x,y}P^u_x & P^u_yA_{x,y}P^s_x \\ P^s_yA_{x,y}P^u_x & P^s_yA_{x,y}P^s_x \end{bmatrix}, \\ \quad \\
\left\{\begin{array}{l}
\forall\, v \in E_x^u, \ \|A_{x,y}v\|_y \geq \sigma^u \|v\|_x, \\
\forall\, v \in E_x^s, \ \|A_{x,y}v \|_y \leq  \sigma^s \|v\|_x,
\end{array}\right., \quad 
\left\{\begin{array}{l}
\| P_y^s A_{x,y}P_x^u\|_{x,y} \leq  \eta, \\
\| P_y^u A_{x,y}P_x^s\|_{x,y} \leq  \eta,
\end{array}\right. \\ \quad \\
\left\{\begin{array}{l}
\|f_{x,y}(0)\|_y \leq \epsilon(\rho), \\
\forall v \in B_x(\rho), \ \| Df_{x,y}(v) - A_{x,y} \|_{x,y} \leq \eta, 
\end{array}\right. 
\end{gather*}
where $\| \cdot \|_{x,y}$ denotes the matrix norm  computed according to the two adapted local norms $\| \cdot \|_x$ and $\| \cdot \|_y$.
\end{enumerate}
\end{definition}

\begin{definition}[Admissible transitions for maps] \label{definition:AdmissibleTransition}
Let $\Gamma_\Lambda$ be a family of adapted local charts  as given in Definition \ref{Definition:AdaptedLocalCharts}. Let  $x,y \in \Lambda$. We say that $x \overset{\Gamma_\Lambda}{\to} y$ is a $\Gamma_\Lambda$-admissible transition if
\[
f(x) \in \gamma_y(B_y(\epsilon(\rho))) \quad ( \ \Leftrightarrow \ \ f_{x,y}(0) \in B_y(\epsilon(\rho)) \ ).
\]
A sequence $(x_i)_{i=0}^n$ of points of $\Lambda$ is said to be $\Gamma_\Lambda$-admissible if $x_i \overset{\Gamma_\Lambda}{\to}x_{i+1}$ for every $0 \leq i < n$. 
\end{definition}

The existence of a family of adapted local norms is at the heart of the Definition \ref{Definition:AdaptedLocalCharts}. We think it is worthwhile to give a complete proof of the following proposition.

\begin{proposition}
Let $(M,f)$ be a $C^1$ dynamical system and $\Lambda \subseteq M$ be a compact $f$-invariant hyperbolic set. Then there exists a family of adapted local charts $\Gamma_\Lambda=(\Gamma,E,N,F,A)$ together with a set of constants $(\sigma^u,\sigma^s,\eta,\rho)$ as in Definition \ref{Definition:AdaptedLocalCharts}.
\end{proposition}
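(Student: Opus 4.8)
The plan is to construct the data $\Gamma_\Lambda=(\Gamma,E,N,F,A)$ and the constants $(\sigma^u,\sigma^s,\eta,\rho)$ in a definite order, the real content being the construction of the adapted family of norms $N$. First I would fix $\sigma^u,\sigma^s$ with $\exp(\lambda^s)<\sigma^s<1<\sigma^u<\exp(\lambda^u)$; this is possible since $\lambda^s<0<\lambda^u$. The adapted norms on the tangent spaces $T_xM$, $x\in\Lambda$, are built by the standard Mather/Lyapunov averaging trick: on $E^u_\Lambda(x)$ set $\|v\|^u_x:=\sum_{k=0}^{m-1}\sigma^{-ku}\|T_xf^k(v)\|$ and on $E^s_\Lambda(x)$ set $\|w\|^s_x:=\sum_{k=0}^{m-1}\sigma^{-ks}\|T_xf^k(w)\|$ for a large integer $m$, choosing $m$ so that the hyperbolicity constant $C_\Lambda$ is beaten, i.e. so that $\|T_xf(v)\|^u_{f(x)}\ge\sigma^u\|v\|^u_x$ for $v\in E^u_\Lambda(x)$ and $\|T_xf(w)\|^s_{f(x)}\le\sigma^s\|w\|^s_x$ for $w\in E^s_\Lambda(x)$; this uses $f(\Lambda)=\Lambda$ and the contraction/expansion estimates with $C_\Lambda$ in Definition~\ref{Definition:LocallyHyperbolicMap}. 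One then declares $\|v+w\|_x:=\max(\|v\|^u_x,\|w\|^s_x)$ for $v\in E^u_\Lambda(x)$, $w\in E^s_\Lambda(x)$, which is by construction a sup norm adapted to the splitting, and continuity in $x$ follows from continuity of $x\mapsto E^{u/s}_\Lambda(x)$ and of the Finsler norm.

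Next I would build the charts $\Gamma=(\gamma_x)_{x\in\Lambda}$: using a $C^1$ exponential-type map (or local charts of $M$ composed with a linear isometry) one gets $\gamma_x:B(1)\to M$ with $\gamma_x(0)=x$, $T_0\gamma_x$ carrying the standard splitting $\mathbb{R}^d=E^u_x\oplus E^s_x$ isometrically onto $(E^u_\Lambda(x)\oplus E^s_\Lambda(x),\|\cdot\|_x)$ — so $E^{u/s}_x:=(T_0\gamma_x)^{-1}E^{u/s}_\Lambda(x)$ and the pulled-back norm $\|\cdot\|_x$ on $\mathbb{R}^d$ is automatically sup-norm adapted. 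Compactness of $\Lambda$ and the $C^1$ hypothesis on $f$ and on the Finsler structure give uniform bounds on the $C^1$ norms of $\gamma_x,\gamma_x^{-1}$. Define $f_{x,y}:=\gamma_y^{-1}\circ f\circ\gamma_x$ and $A_{x,y}:=Df_{x,y}(0)$; since $T_0\gamma_x,T_0\gamma_y$ intertwine tangent maps with $T_xf$, the block estimates $\|A_{x,y}v\|_y\ge\sigma^u\|v\|_x$ on $E^u_x$ and $\le\sigma^s\|v\|_x$ on $E^s_x$ hold by the choice of the adapted norms, and the off-diagonal blocks $P^s_yA_{x,y}P^u_x$, $P^u_yA_{x,y}P^s_x$ vanish at the tangent level, hence can be made $\le\eta$ once $\rho$ is small (by continuity of $Df$).

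Finally, the choice of $\eta$ and $\rho$ is made last and in this order: pick any $\eta<\min\big(\tfrac{\sigma^u-1}{6},\tfrac{1-\sigma^s}{6}\big)$; then, using uniform continuity of $Df$ on the compact $\bar U$ together with the uniform $C^1$ bounds on the charts, choose $\rho>0$ small enough that simultaneously (a) $\gamma_x(B_x(\rho))\subset U$ for all $x$, (b) the ``next chart contains the image'' condition \ref{item:AdaptedLocalCharts_06} holds (again by compactness and continuity of $f$), (c) $\|Df_{x,y}(v)-A_{x,y}\|_{x,y}\le\eta$ for all $v\in B_x(\rho)$ whenever $f(x)\in\gamma_y(B_y(\rho))$, so $\Lip(f_{x,y}-A_{x,y})\le\eta$, and (d) $\|f_{x,y}(0)\|_y=\|\gamma_y^{-1}(f(x))\|_y\le\epsilon(\rho)$ whenever the transition is admissible, which holds by definition of admissibility $f(x)\in\gamma_y(B_y(\epsilon(\rho)))$. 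With these choices every item of Definition~\ref{Definition:AdaptedLocalCharts} is verified. The only genuinely delicate point is the construction of the adapted norms together with their \emph{continuity} in the base point: the finite-sum Lyapunov norms are continuous, but one must check that after passing to the sup over stable/unstable components and pulling back through $T_0\gamma_x$ the resulting parametrized norm is still $C^0$ and uniformly equivalent to the Finsler norm, uniformly over the compact set $\Lambda$ — everything else is a routine compactness-and-continuity shrinking argument for $\rho$.
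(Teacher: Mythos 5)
Your proposal takes a genuinely different route from the paper's, so let me compare. The paper builds its adapted local norms via a supremum over $(n,R)$-\emph{chains} (pseudo-orbits of bounded length $N$ and chain radius $R$): the unstable norm $\|v\|_x$ involves inverses of the products $P^u_{x_N}A_{x_{N-1},x_N}\cdots A_{x_k,x_{k+1}}P^u_{x_k}$ over chains ending at $x$, and similarly for the stable direction over chains starting at $x$. This design has the virtue that the required inequality $\|P^s_y A_{x,y}P^s_x v\|_y\le\sigma^s\|v\|_x$ (and its unstable mirror) is forced \emph{for every admissible transition} $(x,y)$ --- not only $y=f(x)$ --- simply by appending $x$ (or $y$) to a chain that achieves the supremum. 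You instead use finite Lyapunov averages $\sum_{k<m}(\sigma^{u})^{-k}\|T_xf^k(v)\|$ along the \emph{true} orbit, which is simpler, more standard, and also valid in the non-invertible setting because you only iterate forward.

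There is however a gap in your argument when you pass from the tangent-level estimate to the chart-level estimate. Your Lyapunov norm, with the choice of $m$ you indicate, gives exactly $\|T_xf(v)\|^u_{f(x)}\ge\sigma^u\|v\|^u_x$ for the true transition $y=f(x)$. But Definition~\ref{Definition:AdaptedLocalCharts}(ix) demands $\|P^u_yA_{x,y}v\|_y\ge\sigma^u\|v\|_x$ and $\|P^s_yA_{x,y}w\|_y\le\sigma^s\|w\|_x$ for \emph{all} admissible $(x,y)$, i.e.\ also when $y\ne f(x)$ but $f(x)\in\gamma_y(B_y(\epsilon(\rho)))$. Here $A_{x,y}=D\gamma_y^{-1}|_{f(x)}\circ T_xf\circ T_0\gamma_x$, and the middle factor is $D\gamma_y^{-1}|_{f(x)}$, \emph{not} $T_0\gamma_y^{-1}$; so the clean intertwining you invoke holds only at $y=f(x)$. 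You correctly treat the off-diagonal blocks by a continuity-plus-small-$\rho$ argument, but you present the diagonal inequalities as automatic ``by the choice of the adapted norms'', and they are not: they also require a perturbation step, which in turn requires slack in the constants. The fix is small but must be made: choose intermediate constants $\exp(\lambda^s)<(\sigma^s)'<\sigma^s<1<\sigma^u<(\sigma^u)'<\exp(\lambda^u)$, build the Lyapunov norm so that the true-transition estimate holds with $(\sigma^u)'$ and $(\sigma^s)'$, and then use uniform continuity of the norms, projectors and $D\gamma_y^{-1}$ (on the compact $\Lambda$) to shrink $\rho$ so that all admissible transitions satisfy the estimates with $\sigma^u,\sigma^s$. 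The paper's chain-based norm avoids this two-step argument by baking the pseudo-orbit slack directly into the definition of the norm, at the cost of a more elaborate construction (the $(n,R)$-chains and the case analysis of where the supremum is attained). Once you add the slack constants, your more elementary route goes through.
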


\begin{proof}
The proof is done into several steps.

\medskip
{Step 1.} We first construct an {\it adapted local norm}. We need the following notion of $(n,R)$-chains.  Let  $n\geq1$ and $R \in (0,1)$. We say that a sequence of points in $\Lambda$, $(x_0,\ldots,x_n)$, is an $(n,R)$-chain,
\[
\forall\, 0 \leq  k < n, \ f(x_k) \in \gamma_{x_{k+1}}(B(R)).
\]
An $(n,0)$-chain is a true orbit, $\forall\, 0 \leq k < n, \ f(x_k)=x_{k+1}$.

Then we choose $\Delta\in(0,1)$ small enough so that,
\[
\forall\, x,y \in \Lambda, \quad \big[ f(x) \in \gamma_y(B(\Delta)) \ \Rightarrow \ f(\gamma_x(B(\Delta)) \subseteq \gamma_y(B(1)) \big].
\]
We choose $N\geq 2$ large enough such that,
\begin{align*}
&\left\{\begin{array}{l}
2 \, C_{\Lambda} \exp(N \lambda^s) \leq \exp(N\kappa^s), \\ 
2 \, C_{\Lambda} \exp(-N \lambda^u) \leq \exp(-N\kappa^u), 
\end{array}\right.
\end{align*}
We choose $ R \in (0,\Delta)$ small enough such that,  for every $(N,R)$-chain $(x_0,\ldots,x_N)$,
\[
\forall\, 0 \leq k  \leq N, \ f^k(\gamma_{x_0}(B(R)) \subseteq \gamma_{x_k}(B(\Delta)).
\]

We equipped $\mathbb{R}^d$ with the pull backward by $T_0\gamma_x$ of the initial Finsler norm on each $T_xM$ that we call $\| \cdot \|_x^{\star}$. Thanks to the equivariance and the continuity of $E_\Lambda^u(x) \oplus E_\Lambda^s(x)$, we may choose $R$ sufficiently small such that,
\[
\left\{\begin{array}{l}
\| P^s_{x_N}A_{x_{N-1},x_N}P^s_{x_{N-1}} \cdots P^s_{x_1}A_{x_0,x_1}P^s_{x_0} \|_{x_0,x_N}^\star \leq \exp(N\kappa^s), \\
\| ( P^u_{x_N}A_{x_{N-1},x_N}P^u_{x_{N-1}} \cdots P^u_{x_1}A_{x_0,x_1}P^u_{x_0})^{-1} \|_{x_0,x_N}^\star \leq \exp(-N\kappa^u).
\end{array}\right.
\]
The adapted local norm $\|\cdot \|_x$  is by definition the norm on $E_s^u \oplus E_x^s$  defined by,
\begin{enumerate}
\item $\forall\,v \in E_x^u, \ w \in E_x^s, \quad \|v+w \|_x = \max(\|v\|_x,\|w\|_x)$,
\item $\|v\|_x :=  \max_{1 \leq k < N}  \sup_{(x_k,\ldots,x_N), \, \text{$(N-k,R)$-chain}, \, x_N=x}  \\ 
\phantom{\|v\|_x :=}\Big( \|v\|,\|  (P^u_{x_{N}}A_{x_{N-1},x_{N}}P^u_{x_{N-1}} \cdots P^u_{x_{k+1}}A_{x_k,x_{k+1}} P^u_{x_k})^{-1}v \|_{x_k,x_N}^\star e^{(N-k)\kappa^u} \Big)$,
\item $\|w\|_x :=  \max_{1 \leq k < N}  \sup_{(x_0, \ldots,x_k), \, \text{$(k,R)$-chain}, \, x_0=x}  \\ 
\phantom{\|w\|_x :=}\Big( \|w\|,\|  P^s_{x_{k}}A_{x_{k-1},x_{k}}P^s_{x_{k-1}} \cdots P^s_{x_{1}}A_{x_0,x_{1}} P^s_{x_0} w \|_{x_0,x_k}^* e^{-k\kappa^s} \Big)$,
\end{enumerate}
where the supremum is taken over all $(N-k,R)$-chains $(x_k,\ldots,x_N)$ ending at $x$ for the unstable norm, and $(k,R)$-chains $(x_0,\ldots,x_k)$ starting from $x$ for the stable norm, of any length $1 \leq k < N$. Let $B_x(\epsilon)$ be the ball of radius $\epsilon$ for the norm $\| \cdot \|_x$. We finally choose $\rho < R$ small enough so that  for every $x \in \Lambda$,
\begin{gather*}
B_x(\rho) \subseteq B(R), 
\end{gather*}
and  for every $x,y \in \Lambda$ satisfying $f_{x,y}(0) \in B_y(\rho)$,
\begin{gather*}
\forall v \in B_x(\rho), \ \| Df_{x,y}(v) - A_{x,y} \|_{x,y} < \eta.
\end{gather*}
Thanks to the equivariance of the unstable and stable vector bundles, we choose $\rho$ small enough so that
\[
\| P_y^s A_{x,y}P_x^u\|_{x,y} <  \eta \quad \text{and} \quad
\| P_y^u A_{x,y}P_x^s\|_{x,y} <  \eta.
\]

\medskip
{\it Step 2.} We  prove the inequalities,
\begin{gather*}
\forall\, v \in E_x^u, \ \|A_{x,y}v\|_y \geq \sigma^u \|v\|_x
\quad \text{and} \quad
\forall\, v \in E_x^s, \ \|A_{x,y}v \|_y \leq  \sigma^s \|v\|_x.
\end{gather*}
We prove the second inequality with $\sigma^s$, the other inequality with $\sigma^u$ is similar. Let  $v \in E^s_x$ of norm $\|v\|_x=1$ and $w = P^s_y A_{x,y}v$. We discuss 3 cases.

Either $\|w\|_y = \|w\|$, $(x,y)$ is an $(1,R)$-chain, then
\[
\|w\|_y = \|P_y^s A_{x,y}P^s_x v\|  = \big(\|P_y^s A_{x,y}P^s_x v\| e^{-\kappa^s}\big) e^{\kappa^s} \leq \|v\|_x e^{\kappa^s}.
\]

Or there exists $1 \leq k<N-1$ and an $(k,R)$-chain $(y_0, \ldots,y_k)$  such that  $y=y_0$ and
\[
\|w\|_y = \|  P^s_{y_{k}}A_{y_{k-1},y_{k}}P^s_{y_{k-1}} \cdots P^s_{y_{1}}A_{y_0,y_{1}} P^s_{y_0} w \| e^{-k\kappa^s}.
\]
Then $(x,y,y_1,\ldots,y_{k})$ is an $(k+1,R)$-chain of length $k+1 < N$,
\[
\|w\|_y =  \|  P^s_{y_{k}}A_{y_{k-1},y_{k}}P^s_{y_{k-1}} \cdots P^s_{y_{1}}A_{y_0,y_{1}} P^s_{y_0} A_{x,y_0}P^s_x \| e^{-(k+1)\kappa^s} e^{\tilde\lambda^s} \leq \|v\|_x e^{\tilde\lambda^s}.
\]

Or there exists an $(N-1,R)$-chain   $(y_0, \ldots,y_{N-1})$  such that $y_0=y=$ and
\[
\|w\|_y = \|  P^s_{y_{N-1}}A_{y_{N-2},y_{N-1}}P^s_{y_{N-2}} \cdots P^s_{y_{1}}A_{y_0,y_{1}} P^s_{y_0} w \| e^{-(N-1)\kappa^s}.
\]
Then $(x,y_0,\ldots,y_{N-1})$ is an $(N,R)$-chain, and by the choice of $N$ 
\[
\|  P^s_{x_{N-1}}A_{x_{N-2},x_{N-1}}P^s_{x_{N-2}} \cdots P^s_{x_{1}}A_{x_0,x_{1}} P^s_{x_0} A_{x,x_0} \| \leq e^{N\kappa^s}.
\]
We thus obtain
\begin{align*}
\|w\|_y &\leq \|  P^s_{x_{N-1}}A_{x_{N-2},x_{N-1}}P^s_{x_{N-2}} \cdots P^s_{x_{1}}A_{x_0,x_{1}} P^s_{x_0} A_{x,x_0} \| e^{-(N-1)\kappa^s} \, \|v\| \\
&\leq \|v\| \, e^{\kappa^s} \leq \|v\|_x \, e^{\kappa^s}.
\end{align*}
In the 3 cases we have proved $\| P^s_y A_{x,y}v \|_y \leq \|v\|_x e^{\kappa^s}$ or $\|P^s_y A_{x,y}P^s_x \|_{x,y} \leq \sigma^s$.
\end{proof}

\subsection{Adapted local unstable manifold}

We review in this section the property of stability of cones under the iteration of a hyperbolic map. We recall the forward stability of unstable cones, and the backward stability of stable cones.

\begin{definition}[Unstable/stable cones]
Let  $\mathbb{R}^d = E^u \oplus E^s$ be a splitting equipped with a Banach norm $| \cdot |$. Let  $\alpha \in  (0,1)$ 
\begin{enumerate}
\item The {\it unstable cone of angle $\alpha$} is the set
\[
\mathcal{C}^u(\alpha) := \big\{ w \in \mathbb{R}^d : | P^s w | \leq \alpha |P^uw| \big\}.
\]
\item The {\it stable cone of angle $\alpha$} is the set
\[
\mathcal{C}^s(\alpha) := \big\{ w \in \mathbb{R}^d : | P^u w | \leq \alpha |P^s w| \big\}.
\]
\end{enumerate}
\end{definition}

Notice that the unstable cone $\mathcal{C}^u(\alpha)$ contains the unstable vector space $E^u$ and similarly for the stable cone.

\begin{lemma}[Equivariance of unstable cones] \label{Lemma:EquivarianceUnstableCone}
We consider the notations of Definition \ref{Definition:AdaptedLocalHyperbolicMap}, where $(\sigma^u,\sigma^s,\rho,\eta)$ are the set of hyperbolic constants, $\mathbb{R}^d = E^u \oplus E^s$ and $\mathbb{R}^d = \tilde E^u \oplus \tilde E^s$ are two Banach spaces with norms $| \cdot |$ and $\| \cdot \|$ respectively, and $(A,f,E^{u/s}, \tilde E^{u/s}, | \cdot |,\| \cdot\|)$ is an adapted local hyperbolic map. Let 

\[
\alpha  \in \Big(\frac{6\eta}{\sigma^u-\sigma^s},1\Big) \quad\text{and}\quad \beta := \frac{\alpha \sigma^s+3\eta}{\sigma^u-3\eta}.
\]
Then $\beta \leq \alpha$ and, for every $a,b \in B(\rho) =  B^u(\rho) + B^s(\rho)$,
\begin{enumerate}
\item if $b-a \in \mathcal{C}^u(\alpha)$, then
\[
f(b)-f(a) \in \tilde{\mathcal{C}}^u(\beta) \quad\text{and}\quad \|\tilde P^u(f(b)-f(a)) \| \geq (\sigma^u-3\eta) |P^u(b-a)|,
\]
\item if $f(b) -f(a) \in \tilde{\mathcal{C}}^s(\alpha)$, then
\[
b-a \in \mathcal{C}^s(\beta)  \quad\text{and}\quad  \| \tilde P^s(f(b)-f(a)) \| \leq (\sigma^s+3\eta)  |P^s(b-a)|.
\]
\end{enumerate}
\end{lemma}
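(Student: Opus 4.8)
The plan is to reduce both statements to the same kind of estimate, since (ii) is just (i) read "backwards". I will carry out (i) in detail and indicate how (ii) follows. Write $w := b - a = w^u + w^s$ with $w^u = P^u w \in E^u$, $w^s = P^s w \in E^s$, and set $w' := f(b) - f(a) = w'^u + w'^s$ with $w'^u = \tilde P^u w'$, $w'^s = \tilde P^s w'$. Decompose $f = A + g$ where $g := f - A$ satisfies $\Lip(g) \le \eta$, so $w' = Aw + (g(b) - g(a))$ and $\|g(b) - g(a)\| \le \eta |w|$. Using the block form of $A$ together with $\|A^u v\| \ge \sigma^u|v|$, $\|A^s v\| \le \sigma^s |v|$, $\|D^u\|, \|D^s\| \le \eta$, and the hypothesis $|w^s| \le \alpha |w^u|$ (so that $|w| = \max(|w^u|,|w^s|) = |w^u|$), I would get the two one-sided estimates
\begin{gather*}
\|w'^u\| \ge \|A^u w^u\| - \|D^u w^s\| - \|P^u(g(b)-g(a))\| \ge (\sigma^u - \eta\alpha - \eta)|w^u| \ge (\sigma^u - 3\eta)|w^u|, \\
\|w'^s\| \le \|A^s w^s\| + \|D^s w^u\| + \|P^s(g(b)-g(a))\| \le (\sigma^s\alpha + \eta + \eta)|w^u| \le (\sigma^s\alpha + 3\eta)|w^u|.
\end{gather*}
The second displayed inequality already gives $\|w'^s\| \le (\sigma^s + 3\eta)|w^s|$ only when combined with $|w^s|$ on the right; more directly, bounding $\|w'^s\|$ against $|w^s|$ requires being slightly more careful — I would instead keep $\|A^s w^s\| + \eta|w^u| + \eta|w^u|$ but note $|w^u| \ge |w^s|$ is the wrong direction, so the clean statement $\|\tilde P^s(f(b)-f(a))\| \le (\sigma^s+3\eta)|P^s(b-a)|$ must come from the symmetric computation, i.e. it is exactly the content we prove in part (ii) with the roles of $u$ and $s$ swapped. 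So for (i) the two facts to extract are: $\|w'^u\| \ge (\sigma^u - 3\eta)|w^u|$ (the expansion bound, which is the second assertion of (i)) and the cone inclusion.

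For the cone inclusion, divide: $\|w'^s\| / \|w'^u\| \le (\sigma^s\alpha + 3\eta)/(\sigma^u - 3\eta) = \beta$, which is precisely $w' \in \tilde{\mathcal C}^u(\beta)$. It remains to check $\beta \le \alpha$, i.e. $\sigma^s\alpha + 3\eta \le \alpha(\sigma^u - 3\eta)$, i.e. $3\eta \le \alpha(\sigma^u - \sigma^s - 3\eta)$. Since $\alpha > 6\eta/(\sigma^u - \sigma^s)$ and $\eta < (\sigma^u - 1)/6 \le (\sigma^u - \sigma^s)/6$ gives $\sigma^u - \sigma^s - 3\eta \ge (\sigma^u-\sigma^s)/2$, we get $\alpha(\sigma^u - \sigma^s - 3\eta) > \frac{6\eta}{\sigma^u-\sigma^s}\cdot\frac{\sigma^u-\sigma^s}{2} = 3\eta$, as required (and note this also shows $\alpha \le 1$ is consistent with $\alpha > 6\eta/(\sigma^u-\sigma^s)$ precisely because $\eta$ is small).

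For part (ii), the hypothesis is now on the image, $f(b) - f(a) \in \tilde{\mathcal C}^s(\alpha)$, and we want a conclusion about $b - a$. The natural route is to invert the relation $w' = Aw + (g(b)-g(a))$ on the relevant components: from $w'^u = A^u w^u + D^u w^s + P^u(g(b)-g(a))$ solve for $w^u$, using $\|A^u w^u\| \ge \sigma^u |w^u|$ to get $|w^u| \le \frac{1}{\sigma^u}(\|w'^u\| + \eta|w^s| + \eta|w|)$, and from $w'^s$ bound $\|w'^s\|$ below is not available, so instead estimate $\|w'^s\| \le \sigma^s|w^s| + \eta|w^u| + \eta|w|$; combining with $\|w'^u\| \le \alpha\|w'^s\|$ and $|w| = \max(|w^u|,|w^s|)$, a short algebraic manipulation (splitting into the cases $|w| = |w^u|$ versus $|w| = |w^s|$, and ruling out the former using the hypothesis) yields both $|w^u| \le \beta|w^s|$ and $\|w'^s\| \le (\sigma^s + 3\eta)|w^s|$. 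The main obstacle is bookkeeping: keeping track of whether $|w|$ equals $|w^u|$ or $|w^s|$ at each step, since the adapted sup-norm structure is what makes the constants come out clean, and one must verify the "wrong" case is incompatible with the cone hypothesis so that the estimates close. Everything else is a routine application of the triangle inequality together with the defining bounds on $A$, $D^{u/s}$, and $\Lip(f - A)$.
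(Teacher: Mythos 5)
The paper states this lemma without proof, so there is no in-text argument to compare against; the approach you take — write $f=A+g$ with $\Lip(g)\leq\eta$, project onto $\tilde E^u$ and $\tilde E^s$ using the block form of $A$, and exploit that the adapted sup-norms give $\|\tilde P^{u/s}\|\leq 1$ and $|w|=\max(|w^u|,|w^s|)$ — is the standard one and, as far as I can see, the only natural route. Part (i) is correct as written, including the verification of $\beta\leq\alpha$ (via $3\eta<\alpha(\sigma^u-\sigma^s-3\eta)$, using $\alpha>6\eta/(\sigma^u-\sigma^s)$ and $\eta\leq(\sigma^u-\sigma^s)/6$), and your mid-paragraph observation that the contraction estimate belongs to (ii), not (i), is right.

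Part (ii) is only sketched, but the plan does close, and it is worth writing out since this is precisely where the cone hypothesis on the \emph{image} does the work. With $w=b-a$, $w'=f(b)-f(a)$, $G:=g(b)-g(a)$, $\|G\|\leq\eta|w|$, the two basic bounds are
\[
\sigma^u|w^u|\leq\|A^uw^u\|\leq\|\tilde P^uw'\|+\eta|w^s|+\eta|w|\leq\alpha\|\tilde P^sw'\|+\eta|w^s|+\eta|w|
\]
and $\|\tilde P^sw'\|\leq\sigma^s|w^s|+\eta|w^u|+\eta|w|$. The ``wrong'' case $|w|=|w^u|>|w^s|$ is ruled out as follows: substituting gives
\[
(\sigma^u-2\alpha\eta-\eta)|w^u|\leq(\alpha\sigma^s+\eta)|w^s|<(\alpha\sigma^s+\eta)|w^u|,
\]
hence $\sigma^u<\alpha\sigma^s+2\alpha\eta+2\eta<\sigma^s+4\eta$ (since $\alpha<1$), contradicting $\eta<(\sigma^u-\sigma^s)/6$. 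In the remaining case $|w|=|w^s|$, the same two bounds give $\|\tilde P^sw'\|\leq(\sigma^s+2\eta)|w^s|\leq(\sigma^s+3\eta)|w^s|$ and $(\sigma^u-\alpha\eta)|w^u|\leq(\alpha\sigma^s+\alpha\eta+2\eta)|w^s|$; since $\alpha<1$ this implies $(\sigma^u-3\eta)|w^u|\leq(\alpha\sigma^s+3\eta)|w^s|$, i.e.\ $|w^u|\leq\beta|w^s|$. So your proof is correct; I would simply replace the phrase ``a short algebraic manipulation'' by the explicit dichotomy above, since it is the one nontrivial step.
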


We recall the existence of local unstable manifolds. We are not assuming $f$  invertible. In particular the local stable manifold may not exist. We choose a sequence of admissible transitions and prove the equivalence between two definitions.

\begin{definition} \label{Definition:LocalUnstableManifold}
Let $\Gamma_\Lambda$ be a family of adapted local charts. Let $\underline{x} = (x_i)_{i \in \mathbb{Z}}$ be a sequence of $\Gamma_\Lambda$-admissible transitions, $\forall\, i \in \mathbb{Z}, \ x_i \overset{\Gamma_\Lambda}{\to} x_{i+1}$. Denote $f_i := f_{x_i,x_{i+1}}$, $E_i^{u/s} = E_{x_i}^{u/s}$ and $\| \cdot \|_{i} = \| \cdot \|_{x_i}$. Then $(f_i,A_i,E_i^{u/s},\| \cdot \|_i)$ is an adapted local hyperbolic map. The {\it local unstable manifold at the position $i$} is the set
\[
W_i^u(\underline{x}) = \big\{ q \in B_i(\rho) : \exists (q_k)_{k \leq i}, \ q_i=q, \ \forall\, k<i, \ q_k \in B_k(\rho), \ \text{and} \ f_k(q_k) = q_{k+1} \big\},
\]
where $B_i(\rho) = B_i^u(\rho) \oplus B_i^s(\rho)$ is the ball with respect to the adapted local  norm $\| \cdot \|_i$.
\end{definition}

The following theorem shows that, observed in adapted local charts, the local unstable manifolds have a definite size and the local maps expand uniformly.

\begin{theorem}[Adapted local unstable manifold] \label{Theorem:AdaptedLocalUnstableManifold}
Let $\Gamma_\Lambda$ be a family of adapted local charts, and  $\underline{x} = (x_i)_{i \in \mathbb{Z}}$ be a sequence of $\Gamma_\Lambda$-admissible transitions. Let  $f_i = f_{x_i,x_{i+1}}$ be the local maps, $\| \cdot \|_i$ be the local norms, and $\mathcal{G}^u_i$ be the set of Lipschitz graphs as in Definition \ref{Definition:LocalLipschitGraph},
\[
\mathcal{G}_i^u := \Big\{ [G : B_i^u(\rho) \to B_i^s(\rho)] : \Lip(G) \leq  \frac{6\eta}{\sigma^u - \sigma^s}, \ \|G(0)\|_i \leq  \frac{\rho}{2} \Big\}.
\]
Let $0_i^u$ be the null graph in the ball $B_i(\rho)$, and
\[
G_i^n := (\mathcal{T})^u_{i-1} \circ \cdots \circ (\mathcal{T})^u_{i-n+1} \circ (\mathcal{T})^u_{i-n}(0_{i-n}^u).
\]
Then
\begin{enumerate}
\item $(G_i^n)_{n\geq1}$ converges uniformly to a Lipschitz graph $[G_i^u : B_i^u(\rho) \to B_i^s(\rho)]$.
\item The local unstable manifold defined in \ref{Definition:LocalUnstableManifold} coincides with $\Graph(G_i^u)$:
\[
W_i^u(\underline{x}) = \Graph(G_i^u) = \{ v + G_i^u(v) : v \in B_i^u(\rho) \}.
\]
\item The local unstable manifold is equivariant in the sense:
\[
\forall\, i\in\mathbb{Z}, \quad f_i(\Graph(G_i^u)) \supseteq \Graph(G_{i+1}^u),
\]
or more precisely $(\mathcal{T})^u_i(G_i^u) = G_{i+1}^u$.
\item The local unstable manifold is Lipschitz:
\[
\Lip(G_i^u) \leq \frac{6\eta}{\sigma^u-\sigma^s}.
\]
\item The adapted maps are uniformly expanding:
\[
\forall\, i \in\mathbb{Z}, \ \forall\, q,q' \in \Graph(G^u_i), \quad \| f_i(q) - f_i(q') \|_{i+1} \geq (\sigma^u-3\eta) \| q-q'\|_i.
\]
\end{enumerate}
\end{theorem}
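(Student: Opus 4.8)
\textbf{Proof proposal for Theorem \ref{Theorem:AdaptedLocalUnstableManifold}.}

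The plan is to proceed item by item, using the contraction estimate of the forward graph transform from Proposition \ref{Proposition:ForwardLocalGraphTransform} together with the cone equivariance of Lemma \ref{Lemma:EquivarianceUnstableCone}. First I would check that $(\mathcal{T})^u_{i-1}$ is well defined on $\mathcal{G}^u_{i-1}$ with image in $\mathcal{G}^u_i$, so the iterated composition $G_i^n$ makes sense for every $n\geq 1$ and lands in $\mathcal{G}^u_i$; in particular the bound $\Lip(G_i^n)\leq 6\eta/(\sigma^u-\sigma^s)$ and $\|G_i^n(0)\|_i\leq\rho/2$ hold uniformly in $n$. For item (i), I would apply the Lipschitz estimate $\|\tilde G_1-\tilde G_2\|_\infty\leq(\sigma^s+2\eta)|G_1-G_2|_\infty$ from Proposition \ref{Proposition:ForwardLocalGraphTransform}(ii): since $\sigma^s+2\eta<1$ by the standing assumption on $\eta$, the difference $\|G_i^{n+1}-G_i^n\|_\infty$ is dominated by a geometric series (the initial discrepancy $\|G_i^{n+1}-G_i^n\|_\infty$ traces back, after $n$ applications of the transform, to $\|(\mathcal{T})^u_{i-n-1}(0^u_{i-n-1})-0^u_{i-n}\|_\infty$, which is bounded by $\rho$), so $(G_i^n)_n$ is uniformly Cauchy and converges uniformly to some $G_i^u$. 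The limit inherits the Lipschitz bound and the height bound because $\mathcal{G}^u_i$ is closed under uniform convergence, which gives item (iv) for free.

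For item (iii), equivariance, I would use continuity of the graph transform under uniform convergence: $(\mathcal{T})^u_i(G_i^u)=(\mathcal{T})^u_i(\lim_n G_i^n)=\lim_n (\mathcal{T})^u_i(G_i^n)=\lim_n G_{i+1}^{n+1}=G_{i+1}^u$, hence $f_i(\Graph(G_i^u))\supseteq\Graph(G_{i+1}^u)$ by Proposition \ref{Proposition:ForwardLocalGraphTransform}(iv). Item (v), uniform expansion, is then immediate: $q,q'\in\Graph(G_i^u)\subseteq\Graph(G_i^u)\cap f_i^{-1}(\Graph(G_{i+1}^u))$, so the expansion estimate $\|f_i(q)-f_i(q')\|_{i+1}\geq(\sigma^u-3\eta)\|q-q'\|_i$ from Proposition \ref{Proposition:ForwardLocalGraphTransform}(iv) applies directly (using that the adapted norms are sup-norms, so $\|q-q'\|_i=|P^u_i(q-q')|$ on a graph of slope $\leq 1$, matching the cone-based statement in Lemma \ref{Lemma:EquivarianceUnstableCone}).

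The main work, and the step I expect to be the genuine obstacle, is item (ii): identifying $W_i^u(\underline x)$ with $\Graph(G_i^u)$. For the inclusion $\Graph(G_i^u)\subseteq W_i^u(\underline x)$, given $q=v+G_i^u(v)$, I would use surjectivity in Proposition \ref{Proposition:ForwardLocalGraphTransform}(iv) ($f_{i-1}(\Graph(G_{i-1}^u))\supseteq\Graph(G_i^u)$) to pull $q$ back one step at a time, producing the required backward chain $(q_k)_{k\leq i}$ with $q_k\in\Graph(G_k^u)\subseteq B_k(\rho)$ and $f_k(q_k)=q_{k+1}$. For the reverse inclusion $W_i^u(\underline x)\subseteq\Graph(G_i^u)$, I would show that any such backward orbit $(q_k)_{k\leq i}$ satisfies $q_i\in\Graph(G_i^n)$ for every $n\geq 1$, hence $q_i\in\Graph(G_i^u)$ in the limit: the point is that $q_{i-n}\in B_{i-n}(\rho)$ lies on \emph{some} horizontal-slope graph through it, and applying the graph transform $n$ times carries the relevant portion into $\Graph(G_i^n)$ while the forward contraction in the stable direction (factor $\sigma^s+2\eta<1$ per step, Proposition \ref{Proposition:ForwardLocalGraphTransform}(ii)) crushes the dependence on the choice of initial graph. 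Making the last argument precise — choosing at each stage a legitimate element of $\mathcal{G}^u_{i-n}$ passing through $q_{i-n}$, and tracking that the true orbit point stays on the image graph after each transform — is where one has to be careful about staying inside the balls $B_k(\rho)$ and about the distinction between the graph transform of a graph versus the forward image of a single point; the cone equivariance Lemma \ref{Lemma:EquivarianceUnstableCone} is the tool that keeps the orbit increments inside the unstable cone throughout.
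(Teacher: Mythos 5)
The paper itself states this theorem without proof (it defers the graph-transform theory to the monograph of Hirsch, Pugh, Shub), so there is no in-paper proof to compare against; I will assess the proposal on its own merits. Your plan for items (i), (iii), (iv) is correct: the $(\sigma^s+2\eta)$-contraction of Proposition~\ref{Proposition:ForwardLocalGraphTransform}(ii) together with the uniform bound $\|G(0)\|_i\leq\rho/2$ and $\Lip\leq 1/2$ makes the telescoping series geometric, $\mathcal{G}^u_i$ is closed under uniform limits, and $(\mathcal{T})^u_i$ is Lipschitz hence passes to the limit, giving $(\mathcal{T})^u_i(G_i^u)=G_{i+1}^u$. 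For item (v), however, the inclusion $\Graph(G_i^u)\subseteq f_i^{-1}(\Graph(G_{i+1}^u))$ that you assert is false in general: a point of $\Graph(G_i^u)$ near $\partial B_i(\rho)$ may be mapped by $f_i$ outside $B_{i+1}(\rho)$, so Proposition~\ref{Proposition:ForwardLocalGraphTransform}(iv) does not apply to all pairs. The correct route, which you gesture at parenthetically, is to apply Lemma~\ref{Lemma:EquivarianceUnstableCone}(i) directly: for $q,q'\in\Graph(G_i^u)$ one has $q-q'\in\mathcal{C}^u_i(\alpha)$ (since $\Lip(G_i^u)\leq\alpha$), the lemma gives $\|\tilde P^u_{i+1}(f_i(q)-f_i(q'))\|_{i+1}\geq(\sigma^u-3\eta)\|P^u_i(q-q')\|_i$, and the sup-norm structure then identifies both sides with the full adapted norms.

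The genuine gap is in the reverse inclusion $W_i^u(\underline{x})\subseteq\Graph(G_i^u)$ of item (ii). Your plan starts with some graph $H_{i-n}\in\mathcal{G}^u_{i-n}$ through $q_{i-n}$ and pushes it forward $n$ times. But $\mathcal{G}^u_{i-n}$ requires $\|H(0)\|_{i-n}\leq\rho/2$, whereas the definition of $W_i^u$ only guarantees $q_{i-n}\in B_{i-n}(\rho)$; when $\|P^s_{i-n}q_{i-n}\|_{i-n}>\rho/2+\alpha\rho$ (which is possible since $\alpha<1/2$), \emph{no} $\alpha$-Lipschitz graph in $\mathcal{G}^u_{i-n}$ passes through $q_{i-n}$, and $(\mathcal{T})^u_{i-n}$ simply is not defined on any admissible $H_{i-n}$. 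You flag this yourself as ``where one has to be careful,'' but the issue is not a matter of bookkeeping — the chosen starting object lies outside the domain of the operator. A clean fix that avoids the domain issue entirely is to argue via two backward orbits and the \emph{backward} invariance of the stable cone (Lemma~\ref{Lemma:EquivarianceUnstableCone}(ii)): let $q_i^*\in\Graph(G_i^u)$ be the unique point with $P^u_iq_i^*=P^u_iq_i$, and let $(q_k^*)_{k\leq i}$ be a backward orbit of $q_i^*$ on the graphs $\Graph(G_k^u)$ (it exists by the forward inclusion you already proved). Since $P^u_i(q_i-q_i^*)=0$ we have $q_i-q_i^*\in\mathcal{C}^s_i(\alpha)$; Lemma~\ref{Lemma:EquivarianceUnstableCone}(ii) then propagates the stable-cone membership backward and gives $\|P^s_i(q_i-q_i^*)\|_i\leq(\sigma^s+3\eta)^n\|P^s_{i-n}(q_{i-n}-q_{i-n}^*)\|_{i-n}\leq(\sigma^s+3\eta)^n\cdot 2\rho$ for every $n$, forcing $q_i=q_i^*\in\Graph(G_i^u)$. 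This replaces the graph-transform step that your proposal cannot quite launch.
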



\end{document}